\theoremstyle{plain}
\newtheorem{theorem}{Theorem}[section]
\newtheorem{conjecture}[theorem]{Conjecture}
\newtheorem{prop}[theorem]{Proposition}
\newtheorem{corollary}[theorem]{Corollary}
\newtheorem{lemma}[theorem]{Lemma}
\theoremstyle{definition}
\newtheorem{definition}[theorem]{Definition}
\long\def\symbolfootnote[#1]#2{\begingroup
\def\thefootnote{\fnsymbol{footnote}}\footnote[#1]{#2}\endgroup}
\def\lra{\longrightarrow}
\DeclareMathOperator{\GL}{GL}
\def\fM{\mathfrak{M}}
\def\N{\mathrm{N}}
\def\1{\mf{1}}
\DeclareMathOperator{\Frac}{Frac}
\DeclareMathOperator{\res}{res}
\DeclareMathOperator{\Stab}{Stab}
\DeclareMathOperator{\ord}{ord}
\DeclareMathOperator{\rank}{rank}
\DeclareMathOperator{\trd}{trd}
\DeclareMathOperator{\height}{ht}
\newcommand{\mat}[4]{\begin{pmatrix}{#1} & {#2} \\ {#3} & {#4}
\end{pmatrix}}
\newcommand{\mf}{\mathfrak }
\newcommand{\mscr}{\mathscr}
\def\fa{\mathfrak{a}}
\def\fp{\mathfrak{p}}
\def\fq{\mathfrak{q}}
\def\fm{\mathfrak{m}}
\def\fl{\mathfrak{l}}
\def\fP{\mathfrak{P}}
\def\fm{\mathfrak{m}}
\def\Z{\mathbf{Z}}
\def\Q{\mathbf{Q}}
\def\C{\mathbf{C}}
\def\R{\mathbf{R}}
\def\bdf{\begin{defn}}
\def\edf{\end{defn}}
\def\cO{\mathcal{O}}
\def\fb{\mathfrak{b}}
\def\Gal{{\rm Gal}}
\def\sL{{\mscr L}}
\begin{document}
\baselineskip 15.8pt

\title{Ranks of matrices of logarithms of algebraic numbers I: the theorems of Baker and Waldschmidt--Masser}
\author{Samit Dasgupta}

\maketitle

\begin{abstract}
Let $\sL$ denote the $\Q$-vector space of logarithms of algebraic numbers.
 In this expository work, we provide an introduction to the study of ranks of matrices with coefficients in $\sL$. 
  We begin by considering a slightly different question, namely we present a proof of a weak form of Baker's Theorem.  This states that a collection of elements of $\sL$ that is linearly independent over $\Q$ is in fact linear independent over $\overline{\Q}$.  
  Next we recall Schanuel's Conjecture and prove Ax's analogue of it over $\C((t))$.
  
  We then consider arbitrary matrices with coefficients in $\sL$ and state the ``Structural Rank Conjecture,'' which gives a conjecture for the rank of a general matrix with coefficients in $\sL$.
  We prove the theorem of Waldschmidt and Masser, which provides a  lower bound giving a partial result toward the Structural Rank Conjecture.
We conclude by stating a new conjecture that we call the Matrix Coefficient Conjecture, which gives a necessary condition for a square matrix with coefficients in $\sL$ to be singular.
\end{abstract}

\tableofcontents

\section{Introduction}

At the 1900 International Congress of Mathematicians, David Hilbert presented 23 open problems that have continued to serve as an inspiration for generations of mathematicians.  As the 7th in his list, Hilbert asked the following question:

\bigskip

\noindent
{\bf Hilbert's 7th problem.} Let $a, b \in \overline{\Q},$ with  $a \neq 0, 1$ and $b \not\in \Q.$  Is the value $a^b$ necessarily transcendental? 

\bigskip
A proof that Hilbert's question has an affirmative answer was given independently by Gelfond (1934) and Schneider (1935).  The Gelfond--Schnieder Theorem can be stated equivalently as follows.
Let \[ \sL = \{ x \in \C \colon e^x \in \overline{\Q} \} \]
 denote the $\Q$-vector space of logarithms of algebraic numbers. 
 
 \begin{theorem}[Gelfond--Schneider] If two elements of $\sL$ are linearly dependent over $\overline{\Q}$, then they are linearly dependent over $\Q$.
  \end{theorem}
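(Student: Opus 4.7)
The plan is to reduce to the classical form of the Gelfond--Schneider theorem and then invoke the method of auxiliary functions. Given $\lambda_1, \lambda_2 \in \sL$ and $\alpha_1, \alpha_2 \in \overline{\Q}$, not both zero, with $\alpha_1 \lambda_1 + \alpha_2 \lambda_2 = 0$, one may assume both $\lambda_i$ and both $\alpha_i$ are nonzero (otherwise the desired $\Q$-linear dependence is immediate). Writing $\alpha := e^{\lambda_1}$ and $\beta := \lambda_2/\lambda_1 = -\alpha_1/\alpha_2 \in \overline{\Q}$, we have $e^{\lambda_2} = e^{\beta \lambda_1} = \alpha^\beta \in \overline{\Q}^\times$, with $\alpha \neq 0, 1$. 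If we can show $\beta \in \Q$, then $\lambda_1$ and $\lambda_2$ are $\Q$-linearly dependent. Thus the task reduces to the classical assertion: for $\alpha \in \overline{\Q} \setminus \{0,1\}$ and $\beta \in \overline{\Q}$, if $\alpha^\beta \in \overline{\Q}$ then $\beta \in \Q$.

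Assume for contradiction that $\alpha$, $\beta$, and $\gamma := \alpha^\beta$ are all algebraic with $\beta$ irrational, and fix a number field $K$ containing all three. I would follow Gelfond's auxiliary-function construction: for large integer parameters $L$ and $M$ to be optimized, use Siegel's Lemma to produce a nonzero polynomial $P(x,y) = \sum_{0 \le i, j \le L} p_{ij}\, x^i y^j \in \Z[x,y]$ of controlled height such that the entire function
\[ F(z) \;:=\; P(e^z, e^{\beta z}) \;=\; \sum_{0 \le i,j \le L} p_{ij} \, e^{(i + j\beta)z} \]
vanishes at the points $z = \lambda_1, 2\lambda_1, \ldots, M\lambda_1$. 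This amounts to solving an underdetermined linear system (whose coefficients $\alpha^{ki}\gamma^{kj}$ lie in $K$) in the $(L+1)^2$ unknowns $p_{ij}$ subject to $M$ vanishing conditions; taking $(L+1)^2 > M$, Siegel's Lemma produces such a $P$ with height bounded polynomially in $L$ and $M$.

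The next step is extrapolation. Because $F$ vanishes at $M$ points along a real line, a Schwarz-lemma estimate on a disk of radius $R \gg M|\lambda_1|$ yields a sharp analytic upper bound on $|F(k\lambda_1)|$ for integers $k > M$. On the other hand, $F(k\lambda_1) = P(\alpha^k, \gamma^k)$ is an element of $K$ whose height grows only polynomially in $k$, so the Liouville-type inequality $|N_{K/\Q}(x)| \ge 1$ for nonzero algebraic integers $x$ forces $F(k\lambda_1) = 0$ once the analytic estimate becomes small enough. Iterating this extrapolation enlarges the vanishing set of $F$ to all positive integer multiples of $\lambda_1$; since $\beta$ is irrational the exponents $(i + j\beta)\lambda_1$ are pairwise distinct, and a Vandermonde argument on exponential sums forces all $p_{ij} = 0$, contradicting the nontriviality of $P$.

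The main obstacle lies in the parameter optimization: Siegel's Lemma yields a height bound of roughly $L \log(L+M)$ for $P$, the Schwarz-lemma extrapolation loses a factor of order $(r/R)^M$ at each step, and the Liouville lower bound decays exponentially in the degree times the height of the algebraic quantities being estimated. Balancing these three estimates so that the extrapolation can actually be iterated---rather than yielding only a single additional zero---is the technical heart of the Gelfond--Schneider argument, and the template for the more elaborate parameter games in the proofs of Baker's theorem and of the Waldschmidt--Masser lower bound appearing later in the paper.
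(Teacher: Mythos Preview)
Your outline follows the classical Gelfond auxiliary-function scheme, which is essentially the $n=2$ specialization of the weak Baker theorem the paper actually proves in \S2 (the paper does not give a separate proof of Gelfond--Schneider; it is stated as historical background). So the strategy matches the paper's: Siegel's lemma to build the auxiliary function, a Schwarz-type estimate to extrapolate, the discreteness principle to convert smallness into vanishing, iteration, and a Vandermonde argument to finish. One difference is that you impose only simple zeros at the points $k\lambda_1$, whereas the paper (following Baker) imposes vanishing of many derivatives; for $n=2$ the simpler interpolation can in principle be made to work, but the parameter game you flag at the end is more delicate without multiplicities.

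There is, however, a real gap in your final step. You say that because $\beta$ is irrational the exponents $(i+j\beta)\lambda_1$ are pairwise distinct, and that a Vandermonde argument then forces all $p_{ij}=0$. But what you have shown is $\sum_{i,j} p_{ij}\, w_{ij}^{\,k} = 0$ for all $k\ge 1$, where $w_{ij} = e^{(i+j\beta)\lambda_1} = \alpha^i\gamma^j$. The Vandermonde determinant is controlled by the \emph{values} $w_{ij}$, not by the exponents; distinct exponents need not give distinct exponentials, since $e^z$ has period $2\pi i$. The correct conclusion (exactly as in the paper's Theorem~\ref{t:vand}) is that either all $p_{ij}=0$, or two of the $w_{ij}$ coincide, i.e.\ $\alpha^a\gamma^b=1$ for some nonzero $(a,b)\in\Z^2$. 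The latter yields $a\lambda_1 + b\lambda_2 \in 2\pi i\,\Z$, a $\Q$-linear dependence among $\lambda_1, \lambda_2, 2\pi i$, not among $\lambda_1,\lambda_2$ alone. So your argument, as written, proves only the weak form the paper itself establishes (Theorem~\ref{t:weakbaker} with $n=2$), not the full Gelfond--Schneider statement. Removing the $2\pi i$ requires an extra step---for instance, arranging enough zeros (with multiplicity) to force $F\equiv 0$ as an entire function, after which the linear independence of the functions $z\mapsto e^{cz}$ for distinct $c\in\C$ really does give $p_{ij}=0$.
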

  
  A fantastic breakthrough was achieved by Alan Baker 30 years later \cite{baker}, when he generalized from two to an arbitrary number of elements of $\sL$.
  
   \begin{theorem}[Baker, 1966] \label{t:baker} If $n \ge 1$ elements of $\sL$ are linearly dependent over $\overline{\Q}$, then they are linearly dependent over $\Q$.
  \end{theorem}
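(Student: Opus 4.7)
The plan is to argue by contradiction via the transcendence method: assume $\lambda_1,\ldots,\lambda_n \in \sL$ are linearly independent over $\Q$ but satisfy a nontrivial relation $\sum_{i=1}^n \beta_i \lambda_i = 0$ with $\beta_i \in \overline{\Q}$, and derive a contradiction. By induction on $n$ (the base case $n=1$ is trivial and $n=2$ is the Gelfond--Schneider theorem already stated), I may assume every proper subset of $\{\lambda_1,\ldots,\lambda_n\}$ is linearly independent over $\overline{\Q}$, so all $\beta_i$ are nonzero. After normalizing $\beta_n = -1$, the relation becomes
$$\lambda_n = \beta_1\lambda_1 + \cdots + \beta_{n-1}\lambda_{n-1}, \qquad \text{equivalently} \qquad \prod_{i=1}^{n-1}\alpha_i^{\beta_i} = \alpha_n,$$
where $\alpha_i = e^{\lambda_i} \in \overline{\Q}^\times$.

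The core construction is an auxiliary entire function of $n-1$ complex variables of the form
$$\Phi(z_1,\ldots,z_{n-1}) = \sum_{k_0=0}^{L_0}\ \sum_{k_1,\ldots,k_n=0}^{L} p(k_0,k_1,\ldots,k_n)\, z_1^{k_0} \prod_{i=1}^{n-1}\alpha_i^{(k_i + \beta_i k_n)z_i},$$
with parameters $L_0, L$ to be chosen. The decisive feature is that when evaluated at the diagonal point $(m,\ldots,m)$ with $m\in\Z$, the identity $\prod\alpha_i^{\beta_i} = \alpha_n$ causes $\Phi(m,\ldots,m)$ to collapse to $\sum p(\vec k)\, m^{k_0}\prod_{i=1}^n \alpha_i^{k_i m}$, a polynomial expression in algebraic numbers. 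I would then invoke Siegel's Lemma to produce algebraic integer coefficients $p(\vec k)$, not all zero and of controlled height, such that $\Phi$ together with all its partial derivatives of order up to some $T$ vanishes at each of the points $(m,\ldots,m)$ for $m = 1,\ldots,M$. Balancing $L_0, L, M, T$ so that the Siegel system is underdetermined while leaving room for the later analytic step is a standard but delicate counting exercise.

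The heart of the proof is an analytic--arithmetic extrapolation. Applying Schwarz's lemma on a large polydisc encircling the known zeros, one obtains a strong analytic upper bound for $|\Phi(m,\ldots,m)|$ (and for derivatives) at integers $m$ beyond the initial range $M$. A Liouville-type lower bound for nonzero algebraic numbers forces this analytic smallness to be actual vanishing. Iterating the extrapolation amplifies the vanishing set, and eventually the linear independence of $\lambda_1,\ldots,\lambda_{n-1}$ over $\Q$ (inherited from the hypothesis) feeds into a zero estimate showing that no nontrivial expression $\sum p(\vec k)\, z^{k_0}\prod \alpha_i^{k_i z}$ can vanish at so many integers $z$ to such high order. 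This forces $p(\vec k) = 0$ for all $\vec k$, contradicting the choice made via Siegel.

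The principal obstacle will be precisely this final extrapolation-plus-zero-estimate step: one must manage the interplay between the analytic bounds, the arithmetic Liouville estimates, and the combinatorial explosion of vanishing conditions, while simultaneously keeping $L_0, L, M, T$ mutually consistent. The presence of the extra polynomial factor $z_1^{k_0}$ (absent in the two-variable Gelfond--Schneider setting) is what permits passage from $n=2$ to arbitrary $n$, and it is also what complicates the analytic estimates and makes the extrapolation subtle; overcoming this is the essential content of Baker's contribution beyond Gelfond--Schneider.
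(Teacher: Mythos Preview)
Your outline is a recognisable sketch of Baker's original method and is broadly correct, but it differs substantially from what the paper actually does.  First, the paper does not prove Theorem~\ref{t:baker} in full; it proves only the weaker Theorem~\ref{t:weakbaker}, in which $2\pi i$ is adjoined to the $\Q$-linearly independent set.  Second, the paper's auxiliary function is a function of a \emph{single} complex variable with no polynomial factor: one sets $\phi(z)=\sum_\lambda p_\lambda e^{z(\lambda_1 x_1+\cdots+\lambda_n x_n)}$, uses Siegel's lemma to choose $p_\lambda\in\Z$ so that $\phi^{(m)}(z)=0$ for $0\le m<h^2$ and $1\le z\le h$, then alternates a one-variable Schwarz-type lemma with a Liouville bound to bootstrap the vanishing of $\phi(z)=f(\alpha_1^z,\ldots,\alpha_n^z)$ out to $z=1,\ldots,(L+1)^n$.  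The endgame is not a zero estimate forcing all coefficients to vanish, but a clean Vandermonde argument (Theorem~\ref{t:vand}): the matrix $(\alpha^{\lambda z})$ is singular, so two monomials $\alpha^\lambda,\alpha^{\lambda'}$ agree, giving $\prod\alpha_i^{\lambda_i-\lambda_i'}=1$ and hence a $\Q$-relation among $x_1,\ldots,x_n,2\pi i$.

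What each approach buys: your route (with the $z^{k_0}$ factor and a genuine zero estimate at the end) is closer to Baker's own and, if carried through, yields the full homogeneous theorem rather than the weak form; the paper's route is shorter and more transparent but lands only on the weak statement.  Two small corrections to your sketch: since you evaluate only on the diagonal $(m,\ldots,m)$, the $(n-1)$ variables are doing no work and the Schwarz lemma you actually need is the one-variable lemma applied to $z\mapsto\Phi(z,\ldots,z)$, not a polydisc statement; and your claim that the factor $z_1^{k_0}$ is what permits the jump from $n=2$ to general $n$ is inaccurate---the paper's construction handles arbitrary $n$ with no polynomial factor at all, so the role of $z^{k_0}$ is rather to sharpen the conclusion (removing $2\pi i$, equivalently reaching the inhomogeneous form), not to enable the induction.
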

 
 In fact Baker proved an effective refinement of this result giving a strong lower bound on the magnitude of any algebraic linear combination of elements of $\sL$ that are linearly independent over ${\Q}$.  In this paper we will present a proof of a version Baker's Theorem that is slightly weaker than Theorem~\ref{t:baker}.

We next shift our focus from a single linear form in logarithms to arbitrary matrices in $\sL$. The primary conjecture in this direction is the {\bf Structural Rank Conjecture}.   In applications, it is often useful to consider the $\Q$-vector space spanned by $\sL$ and $\Q$, which we denote $\sL+ \Q$.
Given an $m \times n$ matrix $M$ with coefficients in any field of characteristic 0, we define the structural rank of $M$ as follows.
 Choose a $\Q$-basis $\ell_1, \dotsc, \ell_r$ for the coefficients of $M$, and write $M = \sum_{i=1}^r \ell_i M_i$, with $M_i \in M_{m \times n}(\Q)$.  Write $M_x = \sum_{i=1}^r x_i M_i$, where the $x_i$ are indeterminates.  Then $M_x$ is an $m \times n$ matrix with coefficients in the field of rational functions  $F = \Q(x_1, \dots, x_n)$.  We define the {\em structural rank} of $M$ to be the rank of $M_x$ over $F$.  One checks that this definition is independent of the basis $\ell_i$ chosen.

\begin{conjecture}[Structural Rank Conjecture]  The rank of a matrix $M \in M_{m\times n}(\sL + \Q)$ is equal to the structural rank of $M$.
\end{conjecture}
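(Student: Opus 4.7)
The plan is to split the asserted equality into the two inequalities $\rank(M) \le \mathrm{srk}(M)$ and $\rank(M) \ge \mathrm{srk}(M)$, where $\mathrm{srk}$ denotes structural rank. The first is elementary: any $k \times k$ minor of $M$ is obtained from the corresponding minor of $M_x$ by the specialization $x_i \mapsto \ell_i$, so if every $(\rho+1) \times (\rho+1)$ minor of $M_x$ vanishes identically (where $\rho = \mathrm{srk}(M)$), the same minors of $M$ vanish, giving $\rank(M) \le \rho$. The substance of the conjecture lies in the lower bound: starting from a non-zero $\rho \times \rho$ minor polynomial $P(x_1, \dotsc, x_r) \in \Q[x_1, \dotsc, x_r]$ of $M_x$, one must show $P(\ell_1, \dotsc, \ell_r) \ne 0$.

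My first step would be to normalize the basis. Since the structural rank is unchanged upon enlarging a basis of the $\Q$-span $V$ of the entries to a basis of any larger $\Q$-subspace of $\sL + \Q$ (the added basis vectors do not appear in the entries, so they contribute zero matrices $M_i$ and do not change any minor), one may assume that the basis has the form $\ell_1 = 1$ together with $\Q$-linearly independent $\ell_2, \dotsc, \ell_r \in \sL$. The second key observation is that every $\rho \times \rho$ minor $P$ of $M_x$ is a \emph{homogeneous} polynomial of degree exactly $\rho$, because each entry of $M_x$ is a linear form in the $x_i$. In a homogeneous polynomial of degree $\rho$, the exponent $\alpha_1$ is determined by the remaining exponents $\alpha_2, \dotsc, \alpha_r$, so no cancellation can occur when substituting $x_1 = 1$: if $P \ne 0$, then $Q(x_2, \dotsc, x_r) := P(1, x_2, \dotsc, x_r)$ is again a non-zero element of $\Q[x_2, \dotsc, x_r]$.

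At this stage the problem reduces to showing that any non-zero polynomial over $\Q$ yields a non-zero value at a $\Q$-linearly independent tuple in $\sL$, i.e., to the algebraic independence of $\Q$-linearly independent logarithms. This is a direct consequence of Schanuel's Conjecture: applied to $(\ell_2, \dotsc, \ell_r)$ one obtains
\[ \operatorname{tr.deg}_{\Q} \Q(\ell_2, \dotsc, \ell_r, e^{\ell_2}, \dotsc, e^{\ell_r}) \ge r - 1, \]
and since each $e^{\ell_i}$ is algebraic, adjoining it contributes nothing to the transcendence degree. Thus $\operatorname{tr.deg}_{\Q} \Q(\ell_2, \dotsc, \ell_r) \ge r - 1$, forcing algebraic independence, so $Q(\ell_2, \dotsc, \ell_r) = P(1, \ell_2, \dotsc, \ell_r) \ne 0$.

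The main obstacle — and the reason the statement is presented as a conjecture rather than a theorem — is that Schanuel's Conjecture is wide open; even the case $r = 3$ above, which asks whether $\log 2$ and $\log 3$ are algebraically independent over $\Q$, is unknown. The reduction therefore cannot be made unconditional. What one can hope to obtain unconditionally are partial lower bounds on $\rank(M)$ in the spirit of the Waldschmidt--Masser theorem proved later in the paper, whose arguments replace the missing algebraic independence input with explicit auxiliary function constructions and quantitative transcendence measures.
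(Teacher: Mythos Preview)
Your proposal is correct: you rightly recognize that the statement is a conjecture, not a theorem, and you give the same conditional reduction to the special case of Schanuel's Conjecture for logarithms that the paper carries out in the Lemma following Theorem~\ref{t:roy2}. Your argument is in fact a bit cleaner than the paper's, since by first enlarging the $\Q$-span of the entries to include $1$ (using $\sL \cap \Q = \{0\}$ implicitly) you treat everything uniformly via homogeneity and the substitution $x_1 \mapsto 1$, whereas the paper splits into Case~1 ($1$ not in the span, basis $1+c_1, c_2, \dots, c_n$) and Case~2 ($1$ in the span, basis $1, c_1, \dots, c_n$).
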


The ``uber conjecture'' in the transcendence theory of special values of logarithms and exponentials of algebraic numbers is the following conjecture of Schanuel.

\begin{conjecture}
Let $y_1, \dotsc, y_n \in \C$ be $\Q$-linearly independent.  Then \[ \trd_{\Q}\Q(y_1, \dots, y_n, e^{y_1}, \dots, e^{y_n}) \ge n. \]  In particular, if $y_1, \dotsc, y_n \in \sL$ are $\Q$-linearly independent, then 
\begin{equation} \label{e:scs}
\trd_{\Q}\Q(y_1, \dots, y_n) = n.
\end{equation}
\end{conjecture}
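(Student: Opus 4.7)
The first observation is that the conjecture displayed above is Schanuel's Conjecture itself, which is the ``uber conjecture'' just introduced and is famously open: it implies the algebraic independence of $e$ and $\pi$ (take $y_1 = 1, y_2 = i\pi$), the Lindemann--Weierstrass theorem, Baker's Theorem~\ref{t:baker}, and essentially every known transcendence statement about the exponential function. Any honest attempt at a complete proof must therefore simultaneously subsume all of these. The ``in particular'' clause, however, follows formally from the main inequality: if the $y_i \in \sL$ are $\Q$-linearly independent, then each $e^{y_i}$ lies in $\overline{\Q}$, so adjoining the $e^{y_i}$ does not raise the transcendence degree, and the main statement gives $\trd_\Q \Q(y_1, \dots, y_n) \ge n$, while the reverse inequality is immediate from the number of generators.

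The only available template for the main statement is the Gelfond--Schneider--Baker auxiliary function method. The plan would be to assume $\trd_\Q \Q(y_1, \dots, y_n, e^{y_1}, \dots, e^{y_n}) \le n-1$ for contradiction and then use Siegel's lemma to construct an entire function $\Phi(z_1, \dots, z_n)$, polynomial in the $z_j$ and $e^{z_j}$ with algebraic integer coefficients of controlled height, that vanishes to high order at a large box of $\Z$-linear combinations of the $y_i$. One would extrapolate via a Schwarz lemma in several complex variables, combine an analytic upper bound on $|\Phi|$ with an arithmetic lower bound coming from the product formula to force $\Phi$ to vanish identically on suitable loci, and finally apply a zero-estimate theorem of Philippon or Masser--W\"ustholz type to contradict the algebraic structure of $\Phi$.

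The fundamental obstacle is that this strategy, even pushed to its modern limits, produces only a bound of the form $\trd \ge c \cdot n / \log n$, which is essentially the Waldschmidt--Masser bound proved later in this paper; no variant of the auxiliary function construction has ever been made to yield the full linear lower bound in the arithmetic setting. Ax's theorem, which the paper does prove as an analogue over $\C((t))$, sidesteps this obstruction by replacing the auxiliary function with the derivation $d/dt$ and reducing the question to a statement about the rank of a Jacobian; the absence of any analogous natural derivation on $\C$ compatible with algebraicity is precisely where a genuine proof attempt would collapse, and I would not expect to push past the partial results already assembled in the later sections.
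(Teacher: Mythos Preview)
Your assessment is correct: the displayed statement is Schanuel's Conjecture, which the paper states but does not prove, so there is no proof in the paper to compare against. Your derivation of the ``in particular'' clause from the main inequality is also exactly right and matches how the paper implicitly treats it --- when $y_i \in \sL$ the $e^{y_i}$ are algebraic, so the transcendence degree of the larger field equals that of $\Q(y_1,\dots,y_n)$, and the two-sided bound forces equality.

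Your meta-commentary on why the auxiliary-function method stalls short of the full conjecture, and on the role of the derivation $d/dt$ in Ax's function-field analogue, is accurate and aligns with the paper's own framing: the paper proves Ax's theorem in \S\ref{s:ax} precisely via derivations and K\"ahler differentials, and the Waldschmidt--Masser theorem in \S\ref{s:wm} is presented as the strongest known unconditional partial result in the arithmetic direction. One minor calibration: the Waldschmidt--Masser bound is not literally of the shape $cn/\log n$ for transcendence degree --- it is a rank bound $mn/(m+n)$ for matrices in $\sL$ --- though your broader point that known methods fall short of the linear bound is the right one.
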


It is perhaps not surprising that the special case of Schanuel's Conjecture given in (\ref{e:scs}) implies the Structural Rank Conjecture; however an elegant theorem of Roy is that the converse is also true.

\begin{theorem}[Roy]  \label{t:roy}
The Structural Rank Conjecture is equivalent to the special case of Schanuel's conjecture given in (\ref{e:scs}).
\end{theorem}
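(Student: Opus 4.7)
My plan is to prove the two implications separately. The direction (\ref{e:scs}) $\Rightarrow$ Structural Rank Conjecture follows from a short homogeneity argument combined with Schanuel's special case, while the reverse direction---Roy's contribution---is the substantive content and proceeds by realizing polynomial identities as vanishing determinants.

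For (\ref{e:scs}) $\Rightarrow$ Structural Rank Conjecture, let $M \in M_{m \times n}(\sL + \Q)$. Since the Hermite--Lindemann theorem yields $\sL \cap \Q = \{0\}$, I can choose a basis of the form $1, \ell_2, \ldots, \ell_r$ for the $\Q$-span of the entries of $M$ together with $1$, where $\ell_2, \ldots, \ell_r \in \sL$ are $\Q$-linearly independent. Writing $M = \sum_i \ell_i M_i$ with $\ell_1 := 1$, the structural matrix $M_x = \sum_i x_i M_i$ has entries that are linear forms, so every $s \times s$ minor is a homogeneous polynomial of degree $s$ in $(x_1, \ldots, x_r)$. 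The inequality $\mathrm{rank}(M) \leq \mathrm{rank}(M_x)$ is automatic. For the reverse, fix a nonzero $s \times s$ minor $Q$ where $s$ is the structural rank. The crucial observation is that $Q(1, x_2, \ldots, x_r) \neq 0$ in $\Q[x_2, \ldots, x_r]$: otherwise $(x_1 - 1) \mid Q$, which fails for a nonzero homogeneous polynomial, since the inhomogeneity of $x_1 - 1$ would force the quotient $Q/(x_1-1)$ to have infinitely many nonzero homogeneous components. By Schanuel's special case (\ref{e:scs}), $\ell_2, \ldots, \ell_r$ are algebraically independent over $\Q$, so $Q(1, \ell_2, \ldots, \ell_r) \neq 0$, producing a nonzero $s \times s$ minor of $M$.

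For Roy's direction, assume the Structural Rank Conjecture and suppose for contradiction that $\ell_1, \ldots, \ell_n \in \sL$ are $\Q$-linearly independent but satisfy $P(\ell_1, \ldots, \ell_n) = 0$ for some nonzero $P \in \Q[x_1, \ldots, x_n]$. Invoke Valiant's determinantal representation theorem to write $P = \det M(x)$ for an $N \times N$ matrix $M(x) = A_0 + \sum_{i=1}^n x_i A_i$ with each $A_i \in M_{N \times N}(\Q)$. The specialization $M(\ell) \in M_{N \times N}(\sL + \Q)$ has $\det M(\ell) = P(\ell) = 0$, giving rank $< N$. Using the $\Q$-linearly independent basis $1, \ell_1, \ldots, \ell_n$, the structural matrix of $M(\ell)$ is $M_y = y_0 A_0 + \sum_{i=1}^n y_i A_i$, and $\det M_y = y_0^N P(y_1/y_0, \ldots, y_n/y_0)$ is the degree-$N$ homogenization of $P$, which is nonzero. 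Hence the structural rank of $M(\ell)$ equals $N$, contradicting the Structural Rank Conjecture.

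The main obstacle is the invocation of Valiant's determinantal representation: every polynomial $P \in \Q[x_1, \ldots, x_n]$ must be exhibited as the determinant of a matrix whose entries are $\Q$-affine-linear in the $x_i$. A self-contained proof proceeds by induction on an arithmetic formula for $P$, realizing products $P_1 P_2$ via block-diagonal matrices and sums $P_1 + P_2$ via a cofactor-expansion trick that enlarges the matrix by a constant number of rows and columns; for the univariate case the classical companion matrix suffices. Once the representation is in hand, the remainder of the argument is a clean homogenization calculation, so the heart of Roy's theorem lies precisely in this translation of algebraic dependence into a genuine matrix rank drop.
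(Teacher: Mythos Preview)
Your proof is correct and follows essentially the same approach as the paper: the forward direction uses homogeneity of minors together with the algebraic independence furnished by Schanuel's special case, and the reverse direction reduces to a determinantal representation of $P$ with affine-linear entries (the paper's Lemma~\ref{l:roy}, which it proves via an $N=AB$ factorization and the block-matrix identity $\det(AB)=\det\begin{pmatrix} I & B \\ -A & 0\end{pmatrix}$ rather than your arithmetic-formula induction). Your handling of the forward direction is in fact slightly more streamlined than the paper's, since by always adjoining $1$ to the basis you avoid the paper's case split according to whether $1$ lies in the $\Q$-span of the entries.
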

Theorem~\ref{t:roy} is proven in \S\ref{s:src}.
The strongest unconditional evidence toward the Structural Rank Conjecture is the following theorem of Waldschmidt and Masser.

\begin{theorem}[Waldschmidt--Masser] \label{t:wm}
Let $M \in M_{m \times n}(\sL)$.  Suppose that \[ rank(M)  < mn/(m+n). \]
Then there exist $P \in \GL_m(\Q)$ and $Q \in \GL_n(\Q)$ such that $PMQ = \mat{M_1}{0}{M_2}{M_3}$ 
where the 0 block has dimension $m' \times n' $ with $m'/m + n'/n > 1$.
\end{theorem}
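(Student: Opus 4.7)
I prove the contrapositive: assume $M \in M_{m \times n}(\sL)$ admits no block decomposition $PMQ = \mat{M_1}{0}{M_2}{M_3}$ (with $P \in \GL_m(\Q)$, $Q \in \GL_n(\Q)$) whose zero block has dimensions $m' \times n'$ satisfying $m'/m + n'/n > 1$, and deduce $\rank(M) \ge mn/(m+n)$. The non-existence of such a decomposition supplies precisely the non-degeneracy hypothesis that feeds the classical transcendence method.

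The first step is to translate the hypothesis into bilinear form. A block decomposition of the stated form corresponds to a pair of $\Q$-subspaces $W \subseteq \Q^m, V \subseteq \Q^n$ with $\dim_\Q W = m'$, $\dim_\Q V = n'$, and $\mathbf{w}^T M \mathbf{v} = 0$ for all $\mathbf{w} \in W, \mathbf{v} \in V$. The hypothesis therefore reads: for every such pair satisfying $(\dim_\Q W)/m + (\dim_\Q V)/n > 1$, the restriction $M|_{W \times V}$ is nonzero. Call this condition $(\star)$. Taking $V = \Q^n$ or $W = \Q^m$ one sees that $(\star)$ forces the rows and columns of $M$ each to be $\Q$-linearly independent; the full strength of $(\star)$ forbids any ``sparse subconfiguration'' within $M$.

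Next, I implement the transcendence construction. Let $r := \rank_\C(M)$ and factor $M = AB$ with $A \in M_{m \times r}(\C), B \in M_{r \times n}(\C)$; write $\lambda_{ij}$ for the entries of $M$ and $\alpha_{ij} := e^{\lambda_{ij}} \in \overline{\Q}$. Fix integer parameters $S, T$ (to be optimized later) and apply Siegel's lemma to produce a nonzero polynomial $F \in \Z[X_1, \dotsc, X_r]$ of partial degrees at most $S$, whose coefficients are integers of controlled height, and such that an auxiliary exponential polynomial $\Phi_F$ built from $F$, $A$, $B$, and the $\alpha_{ij}$ vanishes to high order on a grid of algebraic points indexed by lattice vectors in $[0, T]^m$ and $[0, T]^n$. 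The $\sim S^r$ free coefficients of $F$ must absorb a polynomially growing system of linear conditions; the inequality $r < mn/(m+n)$ is exactly what makes the system underdetermined and permits a nontrivial $F$ to exist.

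Finally, I extrapolate. A Schwarz-type analytic upper bound on $\Phi_F$ combined with a Liouville-type algebraic lower bound shows $\Phi_F$ vanishes at more points than the construction directly enforced. A zero estimate for exponential polynomials---in the style of Masser, or the subgroup-theoretic refinement due to Philippon---then forces either $F \equiv 0$, contradicting the construction, or the evaluation points to lie in a proper algebraic subgroup of $\G_m^r$, which is ruled out by condition $(\star)$. This yields the desired rank bound. \textbf{The main obstacle} is this final matching step: showing that the matrix non-degeneracy $(\star)$ translates exactly into the subgroup non-degeneracy required by the zero estimate. The combinatorial heart of the Waldschmidt--Masser theorem is the verification that the threshold $m'/m + n'/n > 1$ aligns precisely with the threshold for the relevant algebraic subgroup to be proper, and the rank bound $mn/(m+n)$ emerges from optimizing $S$ and $T$ against this alignment.
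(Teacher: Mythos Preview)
Your two-phase architecture (auxiliary construction via Siegel, then a zero estimate to extract the block structure) is exactly the paper's decomposition into Waldschmidt's theorem plus Masser's theorem, so the strategic shape is right. But the execution has a real gap in each half.

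In the construction, putting the auxiliary polynomial $F$ in $\Z[X_1,\dots,X_r]$ with $r=\rank M$ is backwards. The Siegel step needs \emph{many} unknowns; in the paper the polynomial lives in $n$ variables $t_1,\dots,t_n$ and vanishes on the finite set $X(N)\subset(\overline{\Q}^*)^n$ generated multiplicatively by the exponentiated rows $x_i=(e^{\lambda_{ij}})_j$. The rank $r$ does not govern the number of free coefficients---it enters only in the analytic bound (Schwarz in $r$ variables, or Laurent's interpolation-determinant estimate with $\Theta_r(d)\gg d^{(r+1)/r}$), and that is where $r<mn/(m+n)$ becomes the sharp threshold. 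Your claim that ``$r<mn/(m+n)$ is exactly what makes the system underdetermined'' misplaces the role of $r$; the system is underdetermined for elementary counting reasons, and the rank hypothesis is what makes the extrapolation succeed. With $F$ in $r$ variables you have only $\sim S^r$ coefficients, and since $r$ is small by hypothesis you cannot absorb enough conditions to push the degree below the zero-estimate threshold.

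On the zero-estimate side, what you cite as a black box \emph{is} Masser's theorem, and it is half of the total work. The paper proves it by an inductive commutative-algebra argument: starting from the polynomial $f=f_1$ one constructs $f_2,\dots,f_{n+1}$ as $\Z$-linear combinations of $X(N)$-translates, controlling the height and degree of $\fa_r^*=(f_1,\dots,f_r)^*$ via associated primes and Hilbert polynomials, until height $n+1$ is impossible; along the way one extracts a prime $\fp\subset\fm$ whose stabilizer $\Stab_X(\fp)\subset\Z^m$ has rank $m'$ with $m'/m+n'/n>1$ ($n'=\height\fp$), and a separate lemma converts this into the orthogonality $\langle A,B\rangle=1$. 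The ``matching'' you flag as the main obstacle is not a combinatorial bookkeeping step but this entire algebro-geometric argument, and Philippon's multiplicity estimates do not sidestep it---they are proved the same way. As written, your proposal is a correct table of contents but not a proof.
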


Transcendence results have many important applications in algebraic number theory.  Especially in Iwasawa theory, it is the $p$-adic analogues of these statements that are most relevant.  For example, Leopoldt's conjecture concerns the rank of the matrix of $p$-adic logarithms of a basis of units in a number field $F$.  The $p$-adic analogue of the Waldschmidt--Masser theorem provides the strongest evidence for this conjecture. For instance, for a totally real field $F$ one deduces that the rank of the Leopoldt matrix is at least half the expected one.  We prove the $p$-adic version of the Walsdschmidt--Masser theorem in \S\ref{s:wm}, since the archimedean case is studied more often in the literature.

\bigskip

{\bf Acknowledgements.}  
I would like to extend a great thanks to Damien Roy, who provided a detailed reading of an earlier draft of this paper and made many helpful suggestions that greatly improved the exposition. 
I would also like to thank Mahesh Kakde, my collaborator with whom I learned this material, as well as Michel Waldschmidt for helpful discussions.  
We are very grateful to Alan Harper, whose exposition \cite{harper} we follow for Baker's Theorem, and to Eric Stansifer, whose exposition \cite{stansifer} was very influential for our discussion of the Waldschmidt--Masser Theorem.

This note arose out of a topics course that I taught online at Duke University during Spring 2020, and I thank those attending the  course for lively discussions.

\section{Baker's Theorem}

Before giving an outline of the proof of Baker's Theorem, let us discuss how one could hope to deduce the {\em conclusion} of the theorem.  We are given algebraic numbers $\alpha_1, \dots, \alpha_n \in \overline{\Q}^*$, complex numbers $x_i$ such that $e^{x_i} = \alpha_i$, and a linear dependence
\begin{equation} \label{e:beta}
 \beta_1 x_1  + \cdots + \beta_n x_n = 0 
 \end{equation}
with $\beta_i \in \overline{\Q}$.  
We will show that this implies the existence of integers $\lambda_1, \dots, \lambda_n$, not all zero, such that
\begin{equation}
 \alpha_1^{\lambda_1} \alpha_2^{\lambda_2} \dotsc \alpha_n^{\lambda_n} = 1. 
\label{e:lambda}
 \end{equation}
This implies that the $x_i$, together with the complex number $2 \pi i$, are linearly dependent over $\Q$.
Therefore, the mildly weaker version of Baker's theorem that we will prove is the following:
\begin{theorem} \label{t:weakbaker} If $x_1, \dots x_{n}, 2\pi i \in \sL$ are linearly independent over $\Q$, then 
$x_1, \dotsc, x_{n}$ are linearly independent over $\overline{\Q}$.
\end{theorem}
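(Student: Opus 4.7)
I would prove the theorem by Baker's classical auxiliary function method, applied to the reduction already made in the excerpt. Arguing by contradiction, suppose that a nontrivial $\overline{\Q}$-linear relation $\beta_1 x_1 + \cdots + \beta_n x_n = 0$ holds; we seek integers $\lambda_1, \dots, \lambda_n$, not all zero, with $\alpha_1^{\lambda_1} \cdots \alpha_n^{\lambda_n} = 1$, which would contradict the hypothesis. After reindexing and normalizing, rewrite the relation as $x_n = \beta_1 x_1 + \cdots + \beta_{n-1} x_{n-1}$ and fix a number field $K$ containing $\beta_1, \dots, \beta_{n-1}$ and $\alpha_1, \dots, \alpha_n$.

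The core construction is an auxiliary entire function; in its simplest one-variable form it reads
\[
\Phi(z) \;=\; \sum_{\mu_0 = 0}^{L_0}\, \sum_{\mu_1, \dots, \mu_n = 0}^{L} p(\mu)\, z^{\mu_0}\, \alpha_1^{\mu_1 z}\, \cdots\, \alpha_n^{\mu_n z},
\]
where the coefficients $p(\mu) \in K$ are to be determined and each exponential is interpreted as $\alpha_i^{\mu_i z} := e^{\mu_i x_i z}$. The identity $\alpha_n^z = \alpha_1^{\beta_1 z} \cdots \alpha_{n-1}^{\beta_{n-1} z}$ implied by $x_n = \sum \beta_i x_i$ is what renders $\Phi$ arithmetically tractable: the values $\Phi(m)$ at integers $m$ are algebraic of controlled height, and -- by a careful choice of variables and differential operators, which is the technical heart of Baker's setup -- suitable derivatives $\Phi^{(k)}(m)$ can also be arranged to be algebraic. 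I would then apply Siegel's lemma to produce coefficients $p(\mu)$, not all zero and of bounded height, which make all such algebraic values vanish on a prescribed box of integer parameters $(m, k)$.

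The heart of the proof is the extrapolation step. Combining an analytic upper bound on $\max_{|z| \le R} |\Phi(z)|$ (coming from the obvious growth $|\Phi(z)| \ll R^{L_0} e^{CLR}$) with a Liouville lower bound on any nonzero algebraic value $\Phi^{(k)}(m)$, one deduces that the vanishing prescribed by Siegel's lemma in fact propagates to a strictly larger range of orders and points. Iterating this bootstrap forces, after a controlled number of steps, either $\Phi \equiv 0$ or a numerical contradiction of the form $|\text{nonzero algebraic}| < (\text{absurdly small bound})$. In the first case, grouping terms in $\Phi$ by the value of the exponent $\xi_\mu := \mu_1 x_1 + \cdots + \mu_n x_n$ and using linear independence over polynomial coefficients of the distinct exponentials $e^{\xi z}$ forces two distinct tuples $\mu \neq \mu'$ to satisfy $\xi_\mu = \xi_{\mu'}$, yielding a nontrivial integer relation $\sum (\mu_i - \mu_i') x_i = 0$ and contradicting the $\Q$-linear independence of $x_1, \dots, x_n, 2\pi i$. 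The principal obstacle, and the technical heart of Baker's original paper, is the delicate calibration of the parameters $L_0, L$ together with the vanishing ranges so that Siegel's lemma is applicable, the height estimates stay favorable under repeated differentiation and evaluation, and each step of the bootstrap strictly strengthens the vanishing rather than merely preserving it.
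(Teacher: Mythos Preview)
Your outline captures the essential architecture of the paper's proof: Siegel's lemma to build the auxiliary function, a Schwarz-type analytic bound, a Liouville-type lower bound on nonzero algebraic values, and an extrapolation bootstrap. The overall strategy is the same, but two differences are worth flagging.

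First, you include a polynomial factor $z^{\mu_0}$ in $\Phi$. The paper's streamlined argument for this weak form omits it: the auxiliary function is simply $\phi(z)=\sum_{\lambda} p_{\lambda}\,\alpha^{\lambda z}$, arising from a polynomial $f(t)=\sum_{\lambda} p_{\lambda} t^{\lambda}$ in $n$ variables. The factor $z^{\mu_0}$ appears in Baker's proof of the full inhomogeneous theorem but is unnecessary for the weak statement and only adds bookkeeping.

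Second, and more substantively, your endgame does not match what the bootstrap actually delivers. The iteration does not force $\Phi\equiv 0$; it forces $\phi(z)=0$ at a finite but large set of integers $z$ (specifically, more than $(L+1)^n$ of them). The paper then concludes via a Vandermonde argument: once $f(\alpha^z)=0$ for $z=1,\dots,(L+1)^n$, the square matrix $(\alpha^{\lambda z})$ is singular, and since it is the Vandermonde matrix in the $(L+1)^n$ quantities $\alpha^{\lambda}$, two of those must coincide, yielding the multiplicative relation $\prod_i \alpha_i^{\lambda_i-\lambda_i'}=1$. Your appeal to linear independence of distinct exponentials over polynomial coefficients would give the same relation \emph{if} $\Phi$ vanished identically, but finitely many zeros of an exponential polynomial do not give that; the Vandermonde trick is precisely the device that extracts the relation from finitely many integer zeros. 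So the dichotomy ``either $\Phi\equiv 0$ or a numerical contradiction'' is not how the argument closes: the numerical step is the engine of each bootstrap iteration, and the final step is the Vandermonde determinant, not identical vanishing.
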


It does not take much work beyond the methods that we will present to remove $2 \pi i$ and prove the version of Baker's Theorem stated in Theorem~\ref{t:baker} above.  However, to simplify the exposition and highlight the main points we have included $2 \pi i$ in our proof of Theorem~\ref{t:weakbaker}.

Now, how does one deduce the existence of the $\lambda_i$ from the existence of the $\beta_i$?  It may be enticing to try to prove that the $\lambda_i$ can be taken equal to the $\beta_i$, i.e.\ that the $\beta_i$ are rational (or  more generally, that the $\lambda_i$ can somehow be extracted from the $\beta_i$ in a direct way).  However, in practice a more indirect approach is effective. 

\begin{theorem}  \label{t:vand}
Let $\alpha_1, \dots, \alpha_n \in \C^*$.  Suppose there exists a nonzero polynomial 
\[ f(t_1, \dots, t_n) \in \C[t_1, \dots, t_n] \] of  degree $\le L$ in each variable $t_i$ such that 
\[ f(\alpha_1^z, \dots, \alpha_n^z) = 0 \]
for $z =  1, 2, \dots, (L+1)^n  $.  Then there exist integers $\lambda_1, \dots, \lambda_n$, not all zero, such that \[ \alpha_1^{\lambda_1} \alpha_2^{\lambda_2} \dotsc \alpha_n^{\lambda_n} = 1. \]
\end{theorem}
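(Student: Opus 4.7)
The plan is to interpret the hypothesis as a square linear system in the coefficients of $f$ whose matrix is a generalized Vandermonde, and then extract the desired multiplicative relation from the degeneracy forced by $f \neq 0$.

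First I would write
\[
f(t_1,\dots,t_n) = \sum_{\lambda} c_\lambda \, t_1^{\lambda_1} \cdots t_n^{\lambda_n},
\]
where the sum runs over multi-indices $\lambda \in \{0,1,\dots,L\}^n$. There are exactly $N = (L+1)^n$ such multi-indices. Setting $\mu_\lambda = \alpha_1^{\lambda_1} \cdots \alpha_n^{\lambda_n} \in \C^*$, the hypothesis that $f(\alpha_1^z,\dots,\alpha_n^z) = 0$ for $z=1,\dots,N$ becomes the homogeneous linear system
\[
\sum_{\lambda} c_\lambda \, \mu_\lambda^z = 0, \qquad z = 1, 2, \dots, N,
\]
in the $N$ unknowns $c_\lambda$.

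Next, I would analyze the $N \times N$ coefficient matrix $M = (\mu_\lambda^z)_{z,\lambda}$. Enumerating the multi-indices as $\lambda^{(1)},\dots,\lambda^{(N)}$ and writing $\mu_j := \mu_{\lambda^{(j)}}$, one can pull a factor of $\mu_j$ out of the $j$-th column to obtain
\[
\det M = \Big(\prod_{j=1}^{N} \mu_j\Big) \cdot \det\bigl(\mu_j^{z-1}\bigr)_{z,j} = \Big(\prod_{j=1}^{N} \mu_j\Big) \prod_{i<j} (\mu_j - \mu_i),
\]
the classical Vandermonde determinant. Because each $\alpha_i \neq 0$, we have $\mu_j \neq 0$ for all $j$, so the first factor is nonzero.

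Now I would argue by contradiction: if the $\mu_\lambda$ were pairwise distinct, then $\det M \neq 0$, forcing $c_\lambda = 0$ for every $\lambda$ and contradicting $f \neq 0$. Hence there exist distinct multi-indices $\lambda \neq \lambda'$ with $\mu_\lambda = \mu_{\lambda'}$, which gives
\[
\alpha_1^{\lambda_1 - \lambda_1'} \cdots \alpha_n^{\lambda_n - \lambda_n'} = 1
\]
with the integer exponents $\lambda_i - \lambda_i'$ not all zero, as required. The main content of the proof is recognizing the Vandermonde structure; once the number of sample points $(L+1)^n$ is matched to the number of monomials in $f$, there is essentially no obstacle, and the nonvanishing of the Vandermonde determinant does all the work.
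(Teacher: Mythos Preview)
Your proof is correct and follows essentially the same approach as the paper: both interpret the vanishing conditions as a square linear system whose coefficient matrix is the Vandermonde matrix in the quantities $\alpha^{\lambda} = \alpha_1^{\lambda_1}\cdots\alpha_n^{\lambda_n}$, and deduce from $f\neq 0$ that two of these must coincide. Your extra step of factoring $\mu_j$ out of each column to reduce to the standard Vandermonde form is a harmless elaboration of the same idea.
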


\begin{proof}
Consider the square matrix $M$ whose rows are indexed by the integers \[ z = 1, \dotsc, (L+1)^n  \] and whose columns are indexed by the tuples $\lambda = (\lambda_1, \dots, \lambda_n)$ of integers with $0 \le \lambda_i \le L$, with corresponding matrix entry 
\[ \alpha^{\lambda z} := \alpha_1^{\lambda_1 z} \cdots \alpha_n^{\lambda_n z}. \]
The existence of the polynomial $f$ is equivalent to the existence of a column vector $v$ such that $Mv = 0$.  Indeed, the components of $v$ are precisely the coefficients of $f$.  

The existence of a  nonzero $f$ therefore implies that $\det(M) = 0$.  But $M$ is the Vandermonde matrix associated to the elements $\alpha^\lambda = \alpha_1^{\lambda_1} \cdots \alpha_n^{\lambda_n}$ as the tuple $\lambda$ ranges over all $(L+1)^n$ possibilities.  The vanishing of the determininant therefore implies the existence of two distinct tuples $\lambda, \lambda'$  such that $\alpha^\lambda = \alpha^{\lambda'}$.  We therefore have $\alpha^{\lambda - \lambda'} = 1$ as desired.
\end{proof}

Baker's theorem therefore amounts to using equation (\ref{e:beta}) to  contruct an {\em auxiliary polynomial} $f$ that satisfies the conditions of Theorem~\ref{t:vand}.  We first sumarize Baker's ingenious method to do this.

\begin{enumerate}
\item The Dirichlet Box principle is a method of using the Pigeonhole Principle to construct a polynomial $f$ with certain prescribed zeroes.  One can apply this to the elements $\alpha^z$ appearing in the statement of Theorem~\ref{t:vand}.  Of course, the result will not produce a polynomial with enough zeroes (i.e. we may find zeroes for $z = 1, \dots, A,$ for some $A$, but $A$ will be less than $(L+1)^n$).  Baker's clever insight is that the condition (\ref{e:beta}) allows us to ensure that a certain number of {\em derivatives} of $f$ also have zeroes corresponding to these values of $z$.

\item Baker then proved a complex analytic lemma, which is a quantitative strengthening of the classical Schwarz's Lemma, that shows that the vanishing of $f$ and many of its derivatives implies a strong upper bound on the size of $f$ and half as many of its derivatives, but for $B$ times as many integers $z$ (for some $B > 1$ depending on parameters we will make precise later).

\item Using the fact that the $\alpha_i$ and $\beta_i$ are algebraic, Baker deduces that these bounded values (i.e. the values of $f$ and many of its derivatives for $z = 1, \dots, AB $) must actually be 0.  The basic concept is that an integer of absolute value less than 1 must vanish; a generalization of this elementary statement to algebraic numbers of bounded degree and height is applied.

\item Armed with more vanishing, we now go back to step 2 and once again show that half as many derivatives are small for another factor of $B$ times as many values of $z$.  We iterate this procedure $N$ times until $A B^N > (L+1)^n$, thereby showing that  the auxiliary polynomial $f$ has enough zeroes to apply Theorem~\ref{t:vand}.

\end{enumerate}

In the rest of this section we will describe these steps in detail.

\subsection{Construction of auxiliary polynomial}

For each $\alpha_i$, let $c_i$ denote the leading coefficient of the integral minimal polynomial of $\alpha_i$, and let $d$ denote the maximum degree of any $\alpha_i$.

\begin{lemma}\label{l:c}  There exist integers $a_{i, j, s}$ such that for each integer $j \ge 0$, we have
\[ (c_i \alpha_i)^j = \sum_{s=0}^{d-1} a_{i,j,s} \alpha_i^s. \]
\end{lemma}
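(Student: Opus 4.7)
The plan is to exploit the fact that $c_i \alpha_i$ is an algebraic integer, so that the $\Z$-algebra it generates is a finitely generated $\Z$-module with an explicit basis. Fix an index $i$, let $d_i \le d$ denote the degree of $\alpha_i$ over $\Q$, and write its integral (primitive) minimal polynomial as
\[ c_i x^{d_i} + b_{d_i-1} x^{d_i - 1} + \cdots + b_1 x + b_0 \in \Z[x]. \]
Starting from $c_i \alpha_i^{d_i} = -\sum_{k=0}^{d_i - 1} b_k \alpha_i^k$ and multiplying both sides by $c_i^{d_i - 1}$, one obtains
\[ (c_i \alpha_i)^{d_i} = -\sum_{k=0}^{d_i - 1} b_k c_i^{d_i - 1 - k} (c_i \alpha_i)^k, \]
which exhibits $c_i \alpha_i$ as a root of a monic polynomial in $\Z[x]$.

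Next I would prove by induction on $j$ that $(c_i \alpha_i)^j \in \Z \cdot 1 + \Z \cdot (c_i \alpha_i) + \cdots + \Z \cdot (c_i \alpha_i)^{d_i - 1}$. For $j < d_i$ this is trivial. For $j \ge d_i$, write $(c_i \alpha_i)^j = (c_i \alpha_i) \cdot (c_i \alpha_i)^{j-1}$, expand the second factor by the inductive hypothesis as an integer combination of $(c_i\alpha_i)^k$ for $k \le d_i - 1$, and then use the displayed monic relation above to reduce the resulting term $(c_i \alpha_i)^{d_i}$ back into the desired range.

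To convert such an expression into the form asserted in the lemma, simply note that for $0 \le k \le d_i - 1$ we have $(c_i \alpha_i)^k = c_i^k \alpha_i^k$, so any integer combination of the $(c_i\alpha_i)^k$ is automatically an integer combination of $\alpha_i^0, \alpha_i^1, \dots, \alpha_i^{d_i - 1}$. Padding with zero coefficients for the indices $s = d_i, \dots, d-1$ (which is harmless because $d_i \le d$) yields integers $a_{i,j,s}$ satisfying
\[ (c_i \alpha_i)^j = \sum_{s=0}^{d-1} a_{i,j,s} \alpha_i^s, \]
as required.

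The argument is essentially a routine application of the standard trick for making a non-monic algebraic number into an algebraic integer, so I do not anticipate any serious obstacle; the only mild subtlety is keeping track of the factors of $c_i^{d_i - 1 - k}$ when multiplying through to produce a monic relation, but this is purely bookkeeping.
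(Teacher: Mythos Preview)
Your proof is correct and follows essentially the same approach as the paper: both argue by induction on $j$, using the integral minimal polynomial of $\alpha_i$ to reduce higher powers. The only cosmetic difference is that you first pass through the monic relation for $c_i\alpha_i$ (i.e., the standard fact that $c_i\alpha_i$ is an algebraic integer) and then convert to the basis $1,\alpha_i,\dots,\alpha_i^{d-1}$, whereas the paper works directly in that basis from the start via the relation $c_i\alpha_i^{d_i}=\sum b_s\alpha_i^s$.
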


\begin{proof} For notational simplicity, we remove the index $i$.  So we consider $\alpha \in \overline{\Q}$ with degree at most $d$, and let $c$ denote the leading coefficient of the integral minimal polynomial of $\alpha$.  
Then there exist integers $b_0, \dotsc, b_{d-1}$ such that
\begin{equation} \label{e:minpoly}
 c \alpha^d = b_{d-1}\alpha^{d-1} + \cdots + b_1 \alpha + b_0. 
 \end{equation}
We prove the result by induction on $j$.  The base cases $0 \le j < d$ are clear.  For $j \ge d$ we assume by induction that there are integers $a_{j-1, s}$ such that
\[ (c \alpha)^{j-1} = \sum_{s=0}^{d-1} a_{j-1, s} \alpha^s. \]
Multiplying by $c \alpha$, we obtain
\begin{equation} \label{e:cj}
 (c \alpha)^j = \left(\sum_{s=0}^{d-2}  c \cdot a_{j-1, s} \alpha^{s+1}\right) + a_{j-1,d-1} (c \alpha^d). \end{equation}
 Plugging in (\ref{e:minpoly}) for $c \alpha^d$ on the right of (\ref{e:cj}), we obtain the desired expression
 \[ (c \alpha)^j = \sum_{s=0}^{d-1}   a_{j, s} \alpha^{s} \]
 where 
 \[ a_{j,s} = \begin{cases} a_{j-1, d-1} b_0 & s=0 \\
 a_{j-1, d-1} b_s + c \cdot a_{j-1, s-1} & 1 \le s \le d-1.
 \end{cases}
 \] 
\end{proof}

Let \[ f(t) = \sum_{\lambda = (\lambda_1, \dotsc, \lambda_n)} p_\lambda t^\lambda := \sum_{\lambda} p_\lambda t_1^{\lambda_1} \cdots t_n^{\lambda_n} \]
be a polynomial of degree $\le L$ in each variable $t_i$.
With the $c_i$ and $a_{i,j,s}$ as in Lemma~\ref{l:c}, we  calculate
\begin{align*}
(c_1\cdots c_n)^{Lz}f(\alpha^z) &= (c_1\cdots c_n)^{Lz}\sum_{\lambda}p_\lambda \alpha^{\lambda z}
\\
&= \sum_{\lambda}p_\lambda c^{Lz - \lambda z}(c\alpha)^{\lambda z} \\
&= \sum_{\lambda}p_\lambda c^{Lz - \lambda z}\prod_{i=1}^n \sum_{s=0}^{d-1} (\alpha_i)^s a_{i, \lambda_i z, s} \\
&= \sum_{s_1, s_2, \dots, s_n=0}^{d-1} \alpha^s \sum_{\lambda} p_\lambda c^{L-\lambda z} \prod_{i=1}^{n} a_{i, \lambda_i z, s}.
\end{align*}
We can therefore force $f(\alpha^z) = 0$ by imposing integer linear conditions on the coefficients $p_\lambda$, namely, that for each $z$ we have \begin{equation} \label{e:integralf}
 \sum_{\lambda} p_\lambda c^{L-\lambda z} \prod_{i=1}^{n} a_{i, \lambda_i z, s} = 0. \end{equation}

This observation allows for the initial construction of an auxiliary polynomial $f$ using the following lemma of Siegel, often known as ``Dirichlet's Box Principle.''

\begin{lemma}[Siegel] 
Let $N > 2M > 0$ be integers, and let $A=(a_{i,j})$ be an $M \times N$ matrix of integers such that 
$|a_{i,j}|<H.$
There is a nonzero vector $b \in \Z^N$ such that $Ab = 0$ and each coordinate of $b$ has absolute value less than $2NH$.
\end{lemma}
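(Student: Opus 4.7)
The plan is to prove Siegel's lemma by a direct application of the pigeonhole principle to the linear map $b \mapsto Ab$ restricted to a box of integer vectors. The underlying idea is that on a sufficiently large box of inputs, the number of possible outputs is strictly smaller, so two distinct inputs must produce the same output; their difference then lies in the kernel of $A$ and still has small coordinates.

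Concretely, I would fix an integer parameter $B \ge 1$ and consider $\Omega = \{0, 1, \ldots, B\}^N \subset \Z^N$, which has cardinality $(B+1)^N$. For $b \in \Omega$, the coordinate $(Ab)_i = \sum_{j=1}^N a_{i,j} b_j$ is an integer lying in an interval of length at most $B \sum_j |a_{i,j}| < NHB$, so it takes at most $NHB + 1$ distinct values; hence $|A(\Omega)| \le (NHB + 1)^M$. If we can arrange $(B+1)^N > (NHB + 1)^M$, then by pigeonhole two distinct $b, b' \in \Omega$ satisfy $Ab = Ab'$, and $c := b - b'$ is a nonzero element of $\Z^N$ with $Ac = 0$ and $|c_i| \le B$ for each $i$.

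The final step is to choose $B$ and verify this pigeonhole inequality. I would take $B$ to be the largest integer strictly less than $2NH$, so that $|c_i| \le B < 2NH$ automatically, while $B + 1 \ge 2NH$. The desired inequality $(B+1)^N > (NHB + 1)^M$ then follows from the estimates $(B+1)^N \ge (2NH)^N$ and $(NHB + 1)^M \le (NH(B+1))^M \le (2NH)^M (NH)^M$, reducing the claim to $(2NH)^{N-M} > (NH)^M$. Since $N - M > M$ by hypothesis, the left side carries at least one surplus factor of $2NH$ beyond $(2NH)^M \ge (NH)^M$, so the inequality holds as long as $2NH > 1$; the remaining case $2NH \le 1$ is vacuous because the desired conclusion then forces $b = 0$. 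The only part that uses anything beyond routine counting is this numerical inequality, where the hypothesis $N > 2M$ is decisive; that is the sole obstacle worth noting.
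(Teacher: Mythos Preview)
Your argument is correct and is essentially the same pigeonhole proof as the paper's: count integer vectors in a box, bound the size of the image under $b\mapsto Ab$, and subtract two colliding inputs. The only cosmetic difference is that you work with the one-sided box $\{0,\dots,B\}^N$ (with $B+1=2NH$) rather than the paper's symmetric box $\{-NH,\dots,NH\}^N$, which has the small bonus of giving the strict bound $|c_i|\le B<2NH$ directly.
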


\begin{proof}
Consider all vectors $b  \in  \Z^N$ with coordinates of absolute value $\le NH$. There are $(2NH+1)^N> (2NH)^N$ such vectors. For each such $b$, each coordinate of $Ab$ has size at most $(NH)^2$.
The total number of possible vectors $Ab$ is less than $(2(NH)^2)^M$. Since $N > 2M$, the Pigeonhole Principle implies that two distinct $b$ must give the same value of $Ab$. Their
difference gives the desired vector.
\end{proof}

Applying Siegel's lemma to the system of linear equations in (\ref{e:integralf}) will not produce enough zeroes as required by Theorem~\ref{t:vand}.  Indeed, we have not yet used the assumption (\ref{e:beta})!  A key trick noticed by Baker is that it will suffice to have $f$ and sufficiently many of its derivatives vanish.  The precise statement is given below.

\begin{theorem} The following holds for every sufficiently large parameter  $h$.
Let \[ L = h^{2 - 1/(4n)}. \] There exists a polynomial
\[ f(t) = \sum_{\lambda} p_\lambda t^\lambda \in \Z[t_1, \dots, t_n] \]
of degree $\le L$ in each variable $t_i$ such that $|p_\lambda| < e^{h^3}$ and such that the complex analytic function of one variable $z \in \C$ defined by 
 \begin{equation} \label{e:phidef}
\phi(z) = \sum_{\lambda} p_\lambda e^{z(\lambda_1 x_1 + \dotsc + \lambda_n x_n)} 
\end{equation}
 satisfies
\[ \phi^{(m)}(z) = 0 \qquad \text{ for } m = 0, \dots, h^2 - 1, \quad z =  1, \dotsc, h. \]

\end{theorem}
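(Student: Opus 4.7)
The plan is to translate the conditions $\phi^{(m)}(r) = 0$ into a system of integer-coefficient linear equations on the $p_\lambda$ and then apply Siegel's Lemma. Differentiating (\ref{e:phidef}) gives
\[ \phi^{(m)}(z) = \sum_\lambda p_\lambda (\lambda \cdot x)^m e^{z \, \lambda \cdot x}, \qquad \lambda \cdot x := \lambda_1 x_1 + \cdots + \lambda_n x_n. \]
At an integer $z = r$, the exponential becomes $\prod_j \alpha_j^{r \lambda_j} \in \overline{\Q}$. To handle the transcendental factor $(\lambda \cdot x)^m$, I invoke the linear dependence (\ref{e:beta}): assuming WLOG $\beta_n \neq 0$ and setting $\gamma_i := \beta_i / \beta_n \in \overline{\Q}$, the relation $x_n = - \sum_{i < n} \gamma_i x_i$ gives
\[ \lambda \cdot x = \sum_{i<n} \mu_i(\lambda)\, x_i, \qquad \mu_i(\lambda) := \lambda_i - \lambda_n \gamma_i \in \overline{\Q}. \]
Expanding via the multinomial theorem, $\phi^{(m)}(r) = \sum_{|k| = m} \binom{m}{k} \bigl(\prod_{i<n} x_i^{k_i}\bigr) A_{k,r}$, where $A_{k,r} := \sum_\lambda p_\lambda \prod_{i<n} \mu_i(\lambda)^{k_i} \prod_j \alpha_j^{r \lambda_j}$ and $k$ now runs over $(n-1)$-tuples summing to $m$. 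The key reduction: the linear dependence has collapsed $n$-tuples to $(n-1)$-tuples.

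I would then impose the \emph{sufficient} condition $A_{k, r} = 0$ for every $k$, $r$, $m = |k|$, regardless of any algebraic relations among the $x_i^{k_i}$. Each equation $A_{k, r} = 0$ has coefficients in a fixed number field $K \supset \Q(\gamma_i, \alpha_j)$. Applying Lemma~\ref{l:c} to rewrite $(c_j \alpha_j)^{r\lambda_j}$ as an integer combination of $\alpha_j^0, \dotsc, \alpha_j^{d-1}$, clearing bounded denominators from the $\gamma_i$, and projecting onto a $\Q$-basis of $K$, each $A_{k, r} = 0$ becomes $D := [K : \Q]$ integer linear equations in the $p_\lambda$.

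A parameter count then makes Siegel's Lemma applicable. The total number of $(m, k)$ pairs is $\sum_{m=0}^{h^2-1} \binom{m+n-2}{n-2} = O(h^{2n-2})$, giving $O(D \cdot h^{2n-1})$ integer equations in all, whereas the number of unknowns $p_\lambda$ is $(L+1)^n \sim h^{2n - 1/4}$. Since $2n - 1/4 > 2n - 1$, there is ample margin (a factor of $\sim h^{3/4}$) for Siegel's Lemma. For the size bound $|p_\lambda| < e^{h^3}$, I would use $|\mu_i(\lambda)|^{k_i} \le (2L)^{k_i}$, $|\alpha_j|^{r \lambda_j} \le A^{hL}$, and the clearing factor $(c_1 \cdots c_n)^{rL} = e^{O(hL)}$ to bound the individual integer coefficients by $H = e^{O(h^{3 - 1/(4n)}) + O(h^2 \log h)} = e^{o(h^3)}$, so that Siegel's bound $2NH < e^{h^3}$ for $h$ sufficiently large.

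The main obstacle is calibration: the exponent $L = h^{2 - 1/(4n)}$ is chosen precisely so that both (i) the unknowns exceed the equations and (ii) the Siegel output stays below $e^{h^3}$. Getting both margins to work out simultaneously requires careful bookkeeping of polynomial- and exponential-sized contributions, though none of the individual estimates is particularly difficult.
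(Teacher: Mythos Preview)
Your proposal is correct and follows essentially the same route as the paper: use the dependence (\ref{e:beta}) to rewrite $\lambda\cdot x$ in terms of $x_1,\dots,x_{n-1}$, expand $(\lambda\cdot x)^m$ by the multinomial theorem, impose the sufficient conditions $A_{k,r}=0$, convert these to integer linear equations via Lemma~\ref{l:c}, and apply Siegel's Lemma with the same parameter count $O(h^{2n-1})$ equations versus $\sim h^{2n-1/4}$ unknowns. The only cosmetic difference is that the paper also applies Lemma~\ref{l:c} explicitly to the powers of the $\beta_i$ (your $\gamma_i$) rather than invoking ``projection onto a $\Q$-basis of $K$''; either device yields the same integer system with coefficients bounded by $e^{o(h^3)}$.
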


\begin{proof} Note that $\phi(z)$ has been defined so that $\phi(z) = f(\alpha^z) = f(\alpha_1^z, \dotsc, \alpha_n^z)$ for integers $z$.
After dividing the linear dependence (\ref{e:beta}) by $-\beta_n$ (reordering if necessary to ensure this is nonzero) and renaming the coefficients, 
we can write
\[ x_n = \beta_1 x_1 + \cdots + \beta_{n-1} x_{n-1} \]
with $\beta_i \in \overline{\Q}$.  We then have
\begin{equation} \label{e:phidef2}
\phi(z) = \sum_{\lambda} p_\lambda e^{z[(\lambda_1 + \lambda_n\beta_1)x_1 + \dotsc + (\lambda_{n-1} + \lambda_n\beta_{n-1})x_{n-1}]}.
\end{equation}
Note that $\phi^{(m)}(z)$ is the same sum as for $\phi(z)$, but with the term indexed by $\lambda$ multiplied by
\[ \left((\lambda_1 + \lambda_n \beta_1)x_1 + \cdots + (\lambda_{n-1} + \lambda_n \beta_{n-1})x_{n-1} \right)^m. \]  Expanding this out, it suffices to show that
\begin{equation}  \sum_\lambda p_\lambda \alpha^{\lambda z}(\lambda_1 + \lambda_n \beta_1)^{m_1} \cdots (\lambda_{n-1} + \lambda_n \beta_{n-1})^{m_{n-1}} = 0 \label{e:phim}
\end{equation} 
for all tuples of nonnegative integers satisfying \[ m_1 + \cdots + m_{n-1} = m. \]

Let $d$ be the degree of the number field generated by all the $\alpha_i$ and $\beta_i$.
Let $c_i$ denote the leading coefficient in the integral minimal polynomial of $\alpha_i$.
By Lemma~\ref{l:c}, for every nonnegative integer $j$, there exist integers $a_{i,j,s}$ such that
\[ (c_i \alpha_i)^j = \sum_{s=0}^{d-1} a_{i, j, s} \alpha_i^s. \]
 Let $d_i$ and $b_{i,j,s}$ play the same role for the $\beta_i$.

The expression (\ref{e:phim}) will vanish if
\begin{align*}
 \sum_{\mu_1=0}^{m_1} \cdots \sum_{\mu_{n-1}=0}^{m_{n-1}} \sum_{\lambda_1, \cdots \lambda_n = 0}^{L} p_\lambda & \left(\prod_{i=1}^n c_i^{Lz - \lambda_i z} a_{i, \lambda_i z, s_i} \right) \times \\
&  \left( \prod_{j=1}^{n-1} \binom{m_j}{\mu_j} (d_j \lambda_j)^{m_j - \mu_j} \lambda_n^{\mu_j} b_{j, \mu_j, t_j} \right) 
\end{align*}
vanishes for all tuples $(s_1, \dotsc, s_n)$ and $(t_1, \dotsc, t_{n-1})$ with $0 \le s_i, t_i \le d$.

How many linear equations is this in the coefficients $p_\lambda$ we are searching for?  We want vanishing for $0 \le m < h^2$ and $1 \le z \le h$.  Hence the number of such equations is
\[ M = (h^2)^{n-1} h d^{2n-1} = h^{2n-1}d^{2n-1}. \]
Note that $d$ and $n$ are fixed but we are free to make $h$ as large as we want.

The number of variables $p_\lambda$ is $(L+1)^n$, so to ensure this is bigger than $2M$ when $h$ is large, we let $L = h^{2 - 1/(4n)}$ as in the statement of the theorem.
Finally, we bound the size of the coefficients.  An easy induction shows that there is a constant $C$ depending only on the  $\alpha_i$ and $\beta_i$ such that
\[ |a_{i, j, s}| \le C^j \qquad |b_{i, j, t}| \le C^j. \]
Therefore for some constant $K$ depending only on the $\alpha_i$, $\beta_i$, $n$, we have
\[ \prod_{i=1}^n \left| c_i^{Lz - \lambda_i z} a_{i, \lambda_i z, s_i} \right| \le K^{Lz} \le K^{Lh} \]
and similarly
\[  \prod_{j=1}^{n-1} \left| \binom{m_j}{\mu_j} (d_j \lambda_j)^{m_j - \mu_j} \lambda_n^{\mu_j} b_{j, \mu_j, t_j} \right | \le K^{h^2 \log(h)}. \]
By Siegel's Lemma, there is a nontrivial solution in integers $p_\lambda$ such that
\[  |p_\lambda| \le 2K^{Lh+h^2 \log(h)} (L+1)^n \ll e^{h^3}.   \]
\end{proof}

\subsection{Baker's Lemma}

In this section we present a complex analytic lemma of Baker, strengthening the classical Schwarz' Lemma.  This will allow us to bound the sizes of $f$ and many of its derivatives for a multiple $B$ of the $A$ values of $z$ at which we ensured vanishing of our auxiliary polynomial $f$.

\begin{lemma}
 Let $f\colon \C \longrightarrow \C$  be an entire function, let $\epsilon > 0,$ and let $A, B, C, T, U$  be large positive integers such that 
 \begin{equation} \label{e:eps}
  \frac{\epsilon}{2}C > \frac{2T+UAB}{A(\log A)^{1/2}} + \frac{UB A^\epsilon}{\log A}.\end{equation}  Suppose that
\begin{itemize}
\item $|f(z)| \le e^{T + U|z|}$ for  $z \in \C$.
\item $f^{(t)}(z) = 0$ for $t= 0, \dotsc, C-1$ and $z =1, 2, \dotsc, A$.
\end{itemize}
Then \[ |f(z)| \le e^{-(T+Uz)(\log A)^{1/2}} \text{ for } z=1, \dotsc, AB. \] 
\end{lemma}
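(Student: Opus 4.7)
The approach is the classical one underlying such quantitative strengthenings of Schwarz's lemma: use the hypothesis on the vanishing of $f$ and many derivatives to divide out an entire factor, apply the maximum modulus principle on a large disk, then restore the factor to get a bound on $|f|$ on the smaller set of interest.

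More precisely, the plan is to introduce the auxiliary function
\[ g(z) = \frac{f(z)}{\prod_{a=1}^{A}(z-a)^{C}}. \]
Because $f$ vanishes to order $\ge C$ at each integer $a \in \{1,\dotsc,A\}$, each singularity of $g$ is removable, so $g$ is entire. I would then apply the maximum modulus principle to $g$ on the closed disk $|w|\le R$, for the radius choice
\[ R = A + A^{1+\epsilon/2}B. \]
On the boundary circle $|w|=R$, the growth bound gives $|f(w)|\le e^{T+UR}$, while $|w-a|\ge R-A = A^{1+\epsilon/2}B$, so
\[ |g(w)| \;\le\; \frac{e^{T+UR}}{(A^{1+\epsilon/2}B)^{CA}}. \]
Maximum modulus then extends this bound to the whole disk $|w|\le R$.

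Next, for the target integer $z\in\{1,\dotsc,AB\}$: if $z\in\{1,\dotsc,A\}$ the conclusion is trivial since $f(z)=0$. Otherwise $z\in\{A+1,\dotsc,AB\}\subset [0,R]$ and $|z-a|\le AB$ for all $a\in\{1,\dotsc,A\}$, so multiplying back gives
\[ |f(z)| \;=\; |g(z)|\,\prod_{a=1}^{A}|z-a|^{C} \;\le\; \frac{e^{T+UR}(AB)^{CA}}{(A^{1+\epsilon/2}B)^{CA}} \;=\; \frac{e^{T+UR}}{A^{CA\epsilon/2}}. \]
The desired conclusion $|f(z)|\le e^{-(T+Uz)(\log A)^{1/2}}$ is therefore equivalent, after taking logarithms, to
\[ \tfrac{1}{2}CA\epsilon\log A \;\ge\; T + UR + (T+Uz)(\log A)^{1/2}, \]
which should follow from hypothesis (\ref{e:eps}) after substituting $UR\le U A + U A^{1+\epsilon/2}B$ and $(T+Uz)(\log A)^{1/2}\le(T+UAB)(\log A)^{1/2}$, and dividing through by $A\log A$.

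The main obstacle I expect is purely bookkeeping in the final verification step: making sure every subleading piece ($UR$ versus $UA^\epsilon B$; the split of $(T+UAB)(\log A)^{1/2}$ so that it contributes to both $\tfrac{T}{A(\log A)^{1/2}}$ and $\tfrac{T+UAB}{A(\log A)^{1/2}}$ in the hypothesis; the smaller terms $U/\log A$ and $T/(A\log A)$ coming from $R=A+A^{1+\epsilon/2}B$) fits inside the slack of the strict inequality (\ref{e:eps}) once $A$ is sufficiently large. The hypothesis has clearly been engineered (note $A^{\epsilon}$ rather than $A^{\epsilon/2}$, and $2T$ rather than $T$) to absorb exactly these losses, so the calculation should close cleanly; the choice of radius $R$ of order $A^{1+\epsilon/2}B$ is the only genuinely delicate decision, dictated by optimizing the trade-off between the $e^{UR}$ loss from $f$'s growth and the $A^{-CA\epsilon/2}$ gain from the product lower bound.
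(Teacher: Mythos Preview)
Your proposal is correct and follows essentially the same approach as the paper: divide out $\prod_{a=1}^{A}(z-a)^{C}$, apply the maximum modulus principle on a large circle, and compare the resulting product bounds. The only cosmetic difference is the radius---the paper takes $R=A^{1+\epsilon}B$ while you take $R=A+A^{1+\epsilon/2}B$---and your anticipated bookkeeping closes exactly as you describe, using the slack built into the $2T$ and $A^{\epsilon}$ in hypothesis~(\ref{e:eps}).
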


\begin{proof}
The function
\[ h(z) = \frac{f(z)}{(z-1)^{C} \cdots (z-A)^{C}} \]
is entire by the second assumption.  By the maximum modulus principle on the circle of radius $A^{1+\epsilon}B$ around the origin, we have for $|z| \le AB$:
\[ |h(z)| \le \max_{|w| = A^{1+\epsilon}B} |h(w)|, \]
hence \[ |f(z)| \le \max_{|w| = A^{1+\epsilon}B} |f(w)| \cdot  \max_{|w| = A^{1+\epsilon}B} \left| \frac{(z-1)(z-2) \cdots (z-A)}{(w-1)(w-2)\cdots(w-A)} \right|^{C}, \]
Now
\[  \left| \frac{(z-1)(z-2) \cdots (z-A)}{(w-1)(w-2)\cdots(w-A)} \right| \le \frac{(AB)^A}{(A^{1+\epsilon/2}B)^A} = e^{-(\epsilon/2)A \log A}.\]
Meanwhile
\[ |f(w)| \le e^{T + U A^{1+\epsilon}B}. \]
Our goal is to show that $|f(z)| \le e^{-(T + UAB)(\log A)^{1/2}}$,
so it suffices to show that
\[ -(\epsilon/2)AC \log A + (T + UA^{1+\epsilon}B) \le -(T + UAB)(\log A)^{1/2}.
\]
It is easy to see that this is implied by the assumption of the lemma, 
 \[ \frac{\epsilon}{2}C > \frac{2T+UAB}{A(\log A)^{1/2}} + \frac{UB A^\epsilon}{\log A}.\] 
\end{proof}

Let us now apply Baker's Lemma to our auxiliary polynomial and its derivatives.
With $\phi(z)$  as in (\ref{e:phidef}) and (\ref{e:phidef2}), we have
 \[ \phi^{(m)}(z) = \sum_{\mathclap{{m_i}}} \binom{m}{m_1,\dotsc,m_{n-1}}
  \prod_{i=1}^{n-1} x_i^{m_i} f_{m_1, \dotsc, m_{n-1}}(z)\]
  where 
\begin{equation} \label{e:fdef}
 f_{m_1, \dotsc, m_{n-1}}(z)= \sum_\lambda p_\lambda \alpha^{\lambda z}(\lambda_1 + \lambda_n \beta_1)^{m_1} \cdots (\lambda_{n-1} + \lambda_n \beta_{n-1})^{m_{n-1}}.\end{equation}
It is clear from this last expression that
 \begin{equation} \label{e:fk}
  |f_{m_1, \dotsc, m_{n-1}}(z)| < K^{h^3 + L |z|} \end{equation}
for a suitable constant $K$ depending only on $n$, the $\alpha_i$, and the $\beta_i$.  Indeed, the $p_\lambda$ are bounded by $e^{h^3}$.  The $m_i$ and $\lambda_i$ are bounded by $h^2$.  We choose the constant $K$ so that the inequality (\ref{e:fk}) holds with the $\alpha_i$ or $\beta_i$ replaced by any of their conjugates as well.

Now we apply Baker's Lemma  on each of the functions $f_{m_1, \dotsc, m_{n-1}}(z)$
with
\begin{align*}
 T &= h^3 \log K, \quad  U=L \log K,  \\
 C &= h^2/2  \\
  A &= h, \quad B = h^{1/(8n)}, \quad \epsilon = 1/(8n).
 \end{align*}
We suppose that the constants $K$ and $h$ have been chosen so that the values $T, U, A, B, C$ are integers. Note that the $t$th derivative of $f_{m_1, \dotsc, m_{n-1}}(z)$ for $m_1 + \dots + m_{n-1} \le h^2/2$ and $t \le h^2/2$ is a linear combination of  $f_{m_1', \dotsc, m_{n-1}'}(z)$ with $m_1' + \dots + m_{n-1}' \le h^2$, which gives the desired vanishing for $z = 0, \dots, h$ by the construction of the polynomial $f$.
 
Furthermore, with our selection of parameters, the required inequality (\ref{e:eps}) reads
\[ \frac{h^2}{32n} > \frac{2(\log K)h^3 + (\log K) h^{2-1/(4n)} \cdot h \cdot h^{1/(8n)}}{h(\log h)^{1/2}} + \frac{(\log K)h^{2 - 1/4n} \cdot h^{1/(8n)} \cdot h^{1/(8n)}}{\log h}, \]
which is easily seen to hold for $h$ large.  Baker's Lemma therefore yields 
\begin{equation} \label{e:fineq}
 |f_{m_1, \dotsc, m_{n-1}}(z)| < K^{-(h^3 + Lz)(\log h)^{1/2}} \end{equation}
for $m_1 + \dotsc + m_{n-1} \le h^2/2$ and $z=1, \dotsc, h^{1+1/(8n)}$.

\subsection{Discreteness of Algebraic Integers}

We apply the following elementary basic principle.

\begin{lemma} \label{l:discrete}
Suppose that $a \in \overline{\Q}$ such that $d a$ is an algebraic integer for some positive integer $d$.
Suppose that $|a| < \epsilon$ for some positive real number $\epsilon$ and that every conjugate $\sigma(a)$ satisfies $|\sigma(a)| < M$ for some positive real number $M$.  Finally suppose that $[{\Q}(\alpha) \colon {\Q}] \le n$ and that $\epsilon M^{n-1}d^n   < 1$. Then $a = 0$.
\end{lemma}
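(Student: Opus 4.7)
My plan is to derive a contradiction from the assumption $a \ne 0$ using the standard fact that a nonzero rational integer has absolute value at least $1$. Let $m = [\Q(a) : \Q] \le n$, and let $\sigma_1 = \id, \sigma_2, \dotsc, \sigma_m$ denote the distinct embeddings of $\Q(a)$ into $\C$. I will work with the norm $\Norm(a) = \prod_{i=1}^m \sigma_i(a) \in \Q$. The crucial integrality input is that $da$ is an algebraic integer, hence so is each conjugate $\sigma_i(da) = d\sigma_i(a)$, and therefore $\Norm(da) = d^m \Norm(a)$ is a rational algebraic integer, i.e.\ an ordinary element of $\Z$.

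Assuming for contradiction that $a \ne 0$, this forces $|\Norm(da)| \ge 1$, equivalently $|\Norm(a)| \ge d^{-m}$. For the matching upper bound I isolate the factor corresponding to $\sigma_1 = \id$ and apply the hypotheses $|a| < \epsilon$ and $|\sigma_i(a)| < M$ for $i \ge 2$, obtaining $|\Norm(a)| < \epsilon M^{m-1}$. Combining the two bounds yields $\epsilon M^{m-1} d^m > 1$. Since $m \le n$, with $d \ge 1$ and (without loss of generality) $M \ge 1$, the exponents can be weakened to give $\epsilon M^{n-1} d^n > 1$, contradicting the hypothesis $\epsilon M^{n-1} d^n < 1$.

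I do not expect any serious obstacle; the argument is essentially a two-line computation with the norm. The one point I would verify carefully is that the correct denominator-clearing factor is $d^m$ rather than $d$ — this is because $\Norm$ is homogeneous of degree $m$ on $\Q(a)$, so scaling $a$ by $d$ scales $\Norm(a)$ by $d^m$. The mild case distinction about whether $M \ge 1$ causes no trouble, since replacing $M$ by $\max(M,1)$ only weakens the bound on conjugates.
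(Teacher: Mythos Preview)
Your proof is correct and is essentially identical to the paper's: both compute $|\N_{\Q(a)/\Q}(da)|$, bound it above by $d^m \epsilon M^{m-1}$, and conclude via the fact that a nonzero rational integer has absolute value at least $1$. Your treatment is in fact slightly more careful than the paper's, which silently replaces the actual degree $m$ by $n$ in the exponents; your remark that one may take $M \ge 1$ is not quite the right fix (it does not preserve the hypothesis $\epsilon M^{n-1} d^n < 1$), but the case $Md < 1$ is harmless anyway since then $|\N(da)| < (Md)^m < 1$ directly.
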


\begin{proof}
We bound the norm of the algebraic integer $da$:
\begin{align*} | \N_{\Q(a)/\Q}(da)| &= \prod_{\sigma \colon \Q(a) \hookrightarrow \C} |\sigma(da)| \\
& \le d^n | a| \cdot \prod_{\sigma \neq 1} | \sigma(a) | \\
& \le d^n \epsilon M^{n-1} < 1.
\end{align*}
An integer of absolute value less than 1 must be 0.  Hence $\N_{\Q(a)/\Q}(da) = 0$, so $a=0$.
\end{proof}

We apply Lemma~\ref{l:discrete} to each of the algebraic numbers $f_{m_1, \dotsc, m_{n-1}}(z)$ defined in (\ref{e:fdef}) for 
 \[ m_1 + \dotsc + m_{n-1}\le h^2/2,  \qquad z=0,1, \dotsc, h^{1+1/(8n)}. \] In (\ref{e:fineq}) we showed that
 \begin{equation} \label{e:ek}
   |f_{m_1, \dotsc, m_{n-1}}(z)| < \epsilon:= K^{-(h^3 + Lz)(\log h)^{1/2}}. \end{equation}
 We also saw in (\ref{e:fk}) that
\begin{equation}
  |\sigma(f_{m_1, \dotsc, m_{n-1}}(z))| < M := K^{h^3 + L z}  \label{e:mdef}
  \end{equation}
for each $\sigma$.  It is easy to see from its definition that the denominator of $f_{m_1, \dotsc, m_{n-1}}(z)$ can be cleared by an integer of size at most \begin{equation} 
 d := K^{h^2 + Lz},  \label{e:ddef}
  \end{equation}
 after making $K$ larger if necessary depending only on the $\alpha_i$ and $\beta_i$.

The inequality $\epsilon M^{n-1}d^n   < 1$ is then easily seen to hold for $h$ large because of the extra factor $(\log h)^{1/2}$ in the exponent of (\ref{e:ek}), so we conclude that
\[ f_{m_1, \dotsc, m_{n-1}}(z) = 0 \] for 
 $m_1 + \dotsc + m_{n-1}\le h^2/2$ and $z=0,1, \dotsc, h^{1+1/(8n)}$.

\subsection{Bootstrapping}

We repeat the process described above over and over, in the $k$th iteration using Baker's Lemma on the functions $f_{m_1, \dots, m_{n-1}}(z)$ with $m_1 + \dots + m_{n-1} \le h^2/2^{k+1}$ with the parameters:
\begin{align*}
 T &= h^3 \log K, \quad  U=L \log K,  \\
 C &= h^2/2^{k+1}  \\
  A &= h^{1+k/(8n)}, \quad B = h^{1/(8n)}, \quad \epsilon = 1/(8n).
 \end{align*}
We assume that $K$ and $h$ had been chosen initially so that the quantities above are integers.   In the $k$th iteration we obtain that  
\[ |f_{m_1, \dotsc, m_{n-1}}(z)| <  K^{-(h^3 + Lz)(\log(h+k/(8n)))^{1/2}} \]
for 
 \[ m_1 + \dotsc + m_{n-1}\le h^2/2^{k+1},  \qquad z=0,1, \dotsc, h^{1+(k+1)/(8n)}. \]
 The quantities in (\ref{e:mdef}) and (\ref{e:ddef}) do not change, so again Lemma~\ref{l:discrete} implies that the values 
 $f_{m_1, \dotsc, m_{n-1}}(z)$ vanish.  We may therefore move on to the next $k$.
 
 Each iteration multiplies the number of zeroes by $B = h^{1/(8n)}$. After $16n^2$ iterations we will  obtain more than $h^{2n}$ zeroes.  Since $L = h^{2 - 1/(4n)}$ and $h$ is large, we have 
 \[ h^{2n} > (L+1)^n, \]
 so the polynomial $f = f_{0,0,\dots, 0}$ satisfies the conditions of Theorem~\ref{t:vand}.  Therefore there exist integers $\lambda_1, \dots, \lambda_n$, not all zero, such that
 \[ \alpha_1^{\lambda_1} \cdots \alpha_n^{\lambda_n} = 1. \]
 This completes the proof of Baker's Theorem.
 
 \section{Ax's Theorem} \label{s:ax}
 
Moving on from linear forms in elements of $\sL$ to arbitrary polynomials, we remind the reader of Schanuel's Conjecture that was stated in the introduction.

\bigskip

\noindent
{\bf Schanuel's Conjecture.}  Let $y_1, \dots, y_n \in \C$ be $\Q$-linearly independent.  Then \[ \trd_\Q \Q(y_1, \dotsc, y_n, e^{y_1}, \dotsc, e^{y_n}) \ge n. \]

While little is known about this conjecture, we have the following function field analogue proved by James Ax \cite{ax}.

\begin{theorem}[Ax] \label{t:ax}
 Let $y_1, \dots, y_n \in t\C[[t]]$ be $\Q$-linearly independent.  Then \[ \trd_{\C(t)} \C(t)(y_1, \dotsc, y_n, e^{y_1}, \dotsc, e^{y_n}) \ge n.  \]
 \end{theorem}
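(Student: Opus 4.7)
My plan is to reduce Ax's theorem to a purely differential-algebraic statement and then to attack that via Kähler differentials combined with the identities $Dz_i = z_i\,Dy_i$. View $K = \C((t))$ as a differential field with derivation $D = d/dt$ and field of constants $\C$. Each $y_i \in t\C[[t]]$ has $y_i(0) = 0$, so any $\Q$-linear combination of the $y_i$ lying in $\C$ must lie in $t\C[[t]] \cap \C = 0$; hence the $y_i$ are $\Q$-linearly independent modulo $\C$. Setting $z_i = e^{y_i} \in 1 + t\C[[t]]$, one has $Dz_i = z_i\,Dy_i$. I would then aim to prove the following abstract statement $(\star)$: for any differential field $F$ of characteristic $0$ with constant field $C$, if $y_1,\ldots,y_n,z_1,\ldots,z_n \in F$ satisfy $z_i \ne 0$, $Dz_i = z_i\,Dy_i$, and the $y_i$ are $\Q$-linearly independent modulo $C$, then $\trd_C C(y_1,\ldots,y_n,z_1,\ldots,z_n) \ge n+1$. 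Since $\trd_\C \C(t)(y_i,z_i) = 1 + \trd_{\C(t)}\C(t)(y_i,z_i)$ and $\trd_\C \C(t)(y_i,z_i) \ge \trd_\C \C(y_i,z_i) \ge n+1$, statement $(\star)$ yields Ax's theorem.

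To set up the proof of $(\star)$, let $L = C(y_1,\ldots,y_n,z_1,\ldots,z_n) \subset F$ and $r = \trd_C L$. Since $L/C$ is a finitely generated separable extension, $\Omega^1_{L/C}$ is an $L$-vector space of dimension exactly $r$, generated by the $2n$ elements $dy_1,\ldots,dy_n,dz_1,\ldots,dz_n$. The restriction $D|_L \colon L \to F$ is a $C$-derivation, so by the universal property of Kähler differentials it factors as $d \colon L \to \Omega^1_{L/C}$ followed by a unique $L$-linear map $\lambda \colon \Omega^1_{L/C} \to F$ with $\lambda(df) = Df$. The defining identity $Dz_i = z_i\,Dy_i$ translates into the kernel conditions
\[ dz_i - z_i\,dy_i \in \ker \lambda \qquad (i = 1, \ldots, n). \]

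Suppose for contradiction that $r \le n$. Then among the $2n$ generators of $\Omega^1_{L/C}$ there are at least $n$ independent $L$-linear relations. The plan is to combine these with the $n$ kernel conditions above to extract a nontrivial $L$-linear relation among $dy_1,\ldots,dy_n$ alone, and then apply $\lambda$ to obtain $\sum_i a_i\,Dy_i = 0$ with $a_i \in L$ not all zero. From here I would induct on $n$ and on a measure of complexity of the algebraic relation producing this equation: differentiating the underlying polynomial relation $P(y_i,z_i) = 0$ and substituting $Dz_i = z_i\,Dy_i$ yields a strictly simpler relation, unless the coefficients $a_i$ can be normalized to lie in $\Z$ (not all zero). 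In that limiting case, $\sum a_i Dy_i = 0$ forces $\sum a_i y_i \in C$, contradicting $\Q$-linear independence of the $y_i$ modulo $C$. The main obstacle is precisely this final extraction step: Kähler differentials over $C$ detect only algebraic dependence over $C$, which is far weaker than integer linear dependence among the $y_i$, and it is exactly the force of $Dz_i = z_i\,Dy_i$ together with a minimality argument on the relation $P$ that bridges this gap. The extra ``$+1$'' in $(\star)$ emerges from this differentiation step, reflecting the nontriviality of $D$ — in our power-series setting, the transcendental element $t \in K$ itself.
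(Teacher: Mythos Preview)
Your setup is sound and in fact matches the paper's: both reduce to showing $\trd_\C \C(y_i,z_i) \ge n+1$ and both work in $\Omega^1_{L/\C}$ with the forms $\omega_i = dy_i - z_i^{-1}\,dz_i$. The genuine gap is in the two steps you label as the ``extraction'' and the ``minimality'' argument. First, from $r \le n$ together with $\omega_i \in \ker\lambda$ you cannot in general produce a nontrivial $L$-relation among the $dy_i$ alone: when $r = n$ the $dy_i$ may well be a basis of $\Omega^1_{L/\C}$, and in that case the only relations among $\{dy_i, dz_j\}$ are those expressing each $dz_j$ in the $dy$-basis; chasing through your substitution, the induced relation $\sum c_i\,Dy_i = 0$ is trivial exactly when all $\omega_j = 0$ in $\Omega^1_{L/\C}$, a case you do not address. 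Second, and more seriously, the claim that ``differentiating $P(y_i,z_i)=0$ yields a strictly simpler relation unless the $a_i$ normalize to integers'' is the entire content of the theorem and is not justified: already for $n=1$ the operator $P \mapsto P_Y + Z P_Z$ does not decrease total degree, and the passage from $L$-coefficients down to $\C$-coefficients, and then from $\C$ down to $\Q$, are two separate nontrivial steps each requiring its own idea.

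The paper handles these two descents explicitly. For $L \rightsquigarrow \C$ it introduces the lift $D^1\colon \Omega^1_{L/\C} \to \Omega^1_{L/\C}$ of $D$, checks that $D^1\omega_i = 0$ and $D^1(dy_1)=0$, and shows (Lemma~\ref{l:kdk}) that any $L$-relation among elements of $\ker D^1$ forces the coefficients into $\ker D = \C$; this is the clean replacement for your unspecified complexity descent. For $\C \rightsquigarrow \Q$ it proves (Lemma~\ref{l:kmap}) that the map $\C \otimes_\Z (dL/L) \to \Omega^1_{L/\C}/dL$ is injective, by reducing to the function field of a curve and applying residues, which forces the logarithmic differentials $dz_i/z_i$ into a $\Q$-relation. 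Your proposal would need substitutes for both lemmas; without them the argument stalls precisely where you yourself flag the obstacle.
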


In this section we prove Ax's theorem.  The section is self-contained and may be skipped by readers not interested in the function field setting.

\subsection{Derivations}

\begin{definition}
Let $A$ be a commutative ring and $B$ a commutative $A$-algebra.  An $A$-derivation of $B$ into a $B$-module $M$ is an $A$ linear map 
\[ D \colon B \longrightarrow M \]
such that \begin{equation} \label{e:derdef}
 D(ab) = a D(b) +  D(a)b, \qquad a, b \in B, \end{equation}
where we view $M$ as both a left and right $B$-module since $B$ is commutative.
\end{definition}
\bigskip

There is a pair $(d = d_{B/A}, \Omega_{B/A})$ of a $B$-module $\Omega_{B/A}$ and an $A$-derivation \[ d\colon B \longrightarrow \Omega_{B/A} \] that is universal in the sense that any $A$-derivation $D \colon B \lra M$ can be obtained by composing $d_{B/A}$ with a $B$-module homomorphism $\Omega_{B/A} \lra M$.
 The module of {\em K\"ahler differentials} $\Omega_{B/A}$ is defined as the quotient of the free $B$-module generated by formal generators $db$ for each $b \in B$ by the relations
$da = 0$ for $a \in A$, $d(b+b') = db + db'$, and $d(bb') = b \cdot db' + b' \cdot db$.  The universal derivation $d_{B/A} \colon B \lra \Omega_{B/A}$ is defined by $d_{B/A}(b) = db$.

\begin{lemma} \label{l:algebraic}
Let $F/K$ be field extension and $x \in F$ separable algebraic over $K$. Then $dx = 0$ in $\Omega_{F/K}$.
\end{lemma}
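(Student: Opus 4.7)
The plan is to use the universal derivation $d = d_{F/K}\colon F \to \Omega_{F/K}$ together with the minimal polynomial of $x$ over $K$, exploiting the fact that separability is precisely the statement that this minimal polynomial has nonzero derivative at $x$.

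First, I would establish the auxiliary fact that for any polynomial $q(T) = \sum_i a_i T^i \in K[T]$, we have
\[ d(q(x)) = q'(x)\, dx \]
in $\Omega_{F/K}$. This follows directly from the defining properties of a $K$-derivation: $K$-linearity of $d$ kills each $a_i$, while the Leibniz rule (\ref{e:derdef}) yields $d(x^i) = i x^{i-1} dx$ by an easy induction on $i$. Assembling the terms gives the formal derivative $q'(x)$ multiplied by $dx$.

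Next, let $p(T) \in K[T]$ be the minimal polynomial of $x$ over $K$. Applying $d$ to the identity $p(x) = 0$ and using the computation above yields
\[ 0 = d(p(x)) = p'(x)\, dx \]
in $\Omega_{F/K}$. The assumption that $x$ is separable over $K$ means exactly that $p'(x) \neq 0$ in $F$, so $p'(x)$ is a unit in the field $F$. Since $\Omega_{F/K}$ is an $F$-module, we may multiply through by $p'(x)^{-1}$ to obtain $dx = 0$, as desired.

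There is no real obstacle here; the only subtle point is making sure the Leibniz rule is iterated correctly to give the formal derivative formula, but this is routine. The core of the argument is that separability is designed precisely to make the derivative argument go through.
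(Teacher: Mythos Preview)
Your proof is correct and follows exactly the same approach as the paper: apply $d$ to the minimal polynomial relation $p(x)=0$ to get $p'(x)\,dx=0$, then use separability to invert $p'(x)$. The only difference is that you spell out the identity $d(q(x))=q'(x)\,dx$ via Leibniz and $K$-linearity, which the paper takes as understood.
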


\begin{proof}
Let $f(x) \in K[x]$ be the minimal polynomial of $x$.  Then 
\[ 0 = d(f(x)) = f'(x) dx. \]
Since $x$ is separable, $f'(x) \neq 0$, so $dx = 0$.
\end{proof}

Meanwhile, if $F(t)$ denotes the function field in one variable over the field $F$, we have that $\Omega_{F(t)/F}$ is the 1-dimensional $F(t)$-vector space generated by $dt$, with $d(f(t)) = f'(t) dt$.

\begin{lemma} \label{l:extend}
 Let $K \subset F \subset L$ be fields of characteristic 0.  Let $D \colon F \longrightarrow F$ be a $K$-derivation.  Then $D$ can be extended to a $K$-derivation $L \longrightarrow L$.
 \end{lemma}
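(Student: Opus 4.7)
The plan is to use Zorn's lemma to reduce the problem to extending a derivation one element at a time, and then handle the transcendental and algebraic cases separately using the universal property of the polynomial ring and the separability of $L/F$ in characteristic zero.

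First I would consider the set $\mathcal{S}$ of all pairs $(M, D_M)$ where $F \subseteq M \subseteq L$ is an intermediate subfield and $D_M \colon M \to M$ is a $K$-derivation extending $D$, partially ordered by $(M, D_M) \le (M', D_{M'})$ iff $M \subseteq M'$ and $D_{M'}|_M = D_M$. The set $\mathcal{S}$ is nonempty (it contains $(F, D)$), and every chain has an obvious upper bound given by the union, with the derivation defined componentwise. Zorn's lemma yields a maximal element $(M^*, D^*)$, and it suffices to show $M^* = L$. If not, pick $\alpha \in L \setminus M^*$; I will extend $D^*$ to a $K$-derivation on $M^*(\alpha)$, contradicting maximality.

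If $\alpha$ is transcendental over $M^*$, then $M^*[\alpha]$ is a polynomial ring and I can simply set $\widetilde{D}\bigl(\sum a_i \alpha^i\bigr) = \sum D^*(a_i) \alpha^i$ (equivalently, decreeing $\widetilde{D}(\alpha) = 0$ and extending by $K$-linearity and Leibniz); any other value in $M^*(\alpha)$ would work equally well. The Leibniz rule on $M^*[\alpha]$ is a routine check, and $\widetilde{D}$ extends uniquely to $M^*(\alpha)$ via the quotient rule $\widetilde{D}(f/g) = (\widetilde{D}(f) g - f \widetilde{D}(g))/g^2$.

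If $\alpha$ is algebraic over $M^*$, let $p(x) = \sum a_i x^i \in M^*[x]$ be its minimal polynomial and set $p^{D^*}(x) := \sum D^*(a_i) x^i$. Since $\chr K = 0$, the extension is separable, so $p'(\alpha) \neq 0$, and I can define
\[ c := -p^{D^*}(\alpha)/p'(\alpha) \in M^*(\alpha). \]
To construct the desired extension I would define a map $\Phi \colon M^*[x] \to M^*(\alpha)$ by $\Phi(g) = g^{D^*}(\alpha) + g'(\alpha) c$, view $M^*(\alpha)$ as an $M^*[x]$-module via the quotient $M^*[x] \twoheadrightarrow M^*[x]/(p) = M^*(\alpha)$, and verify that $\Phi$ is a $K$-derivation into this module. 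By construction $\Phi(p) = p^{D^*}(\alpha) + p'(\alpha) c = 0$, so $\Phi$ factors through $M^*(\alpha)$, producing the required extension $\widetilde{D}$ satisfying $\widetilde{D}(\alpha) = c$.

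The main obstacle is this last algebraic case: one must both exploit separability to solve for the value of the derivation at $\alpha$, and then verify that setting $\widetilde{D}(\alpha) = c$ is consistent with the algebraic relation imposed by $p(\alpha) = 0$. Both of these hinge on the formula $p'(\alpha) \neq 0$, which is precisely where the characteristic zero hypothesis enters. In either case the extension contradicts maximality of $(M^*, D^*)$, forcing $M^* = L$ and completing the proof.
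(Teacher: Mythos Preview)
Your proposal is correct and follows essentially the same approach as the paper: both reduce via Zorn's lemma to extending by a single element $\alpha$, and both use the identical formula $\widetilde{D}(g(\alpha)) = g^{D^*}(\alpha) + g'(\alpha)c$ with $c = -p^{D^*}(\alpha)/p'(\alpha)$ in the algebraic case (and arbitrary $c$ in the transcendental case). The only cosmetic difference is that the paper presents the one-step extension first and invokes Zorn afterward, whereas you invoke Zorn first and then derive a contradiction; your treatment of well-definedness in the algebraic case (checking $\Phi(p)=0$ so the map factors through $M^*[x]/(p)$) is in fact slightly more explicit than the paper's, which leaves this verification to the reader.
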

 
\begin{proof}
For $f \in F[t]$, let $f^D$ denote the polynomial where $D$ has been applied to the coefficients of $f$.
We show how to extend the derivation $d$.
 Let $z \in L, z \not \in F$.  If $z$ is algebraic over $F$, let $p(x)$ be its minimal polynomial.  Define
\begin{equation} \label{e:rule1} u = - \frac{p^D(z)}{p'(z)}, \quad \tilde{D}(g(z)) = g^D(z) + g'(z)u. \end{equation}

If $z$ is transcendental over $F$, we define
\begin{equation} \label{e:rule2}
 \tilde{D}(g(z)) = g^D(z) + g'(z)u 
 \end{equation}
  for any $u \in L$.
We leave it to the reader to check that setting $\tilde{D}|_F = D$ and using (\ref{e:rule1}) or (\ref{e:rule2}) to extend to $F(z)$ yields a derivation $\tilde{D}$.  Now one uses Zorn's Lemma to extend $D$ all the way to $L$. 
\end{proof}

\begin{corollary} \label{c:trd}
 Let $K \subset L$ be fields of characteristic 0. Then \[ \dim_L \Omega_{L/K} = \trd_K L.\]  More generally, if $K \subset F \subset L$, then
\[ \dim_L (L \cdot d_{L/K}(F)) = \trd_K F. \]
\end{corollary}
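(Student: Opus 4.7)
The plan is to prove the more general second statement; the first follows by taking $F = L$, because $\Omega_{L/K}$ is by definition generated as an $L$-module by $\{d_{L/K}(y) : y \in L\} = d_{L/K}(L)$. Let $n = \trd_K F$ and fix a transcendence basis $x_1, \ldots, x_n$ of $F$ over $K$. My claim is that the elements $dx_1, \ldots, dx_n$ (where $d = d_{L/K}$) form an $L$-basis of $L \cdot d_{L/K}(F)$, from which the dimension count follows.

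For spanning, fix $y \in F$. Since $F/K(x_1, \ldots, x_n)$ is algebraic and $\chr K = 0$, the element $y$ is separable algebraic over $K(x_1, \ldots, x_n)$. Taking its minimal polynomial and clearing denominators in the coefficients, I obtain $q \in K[T_1, \ldots, T_n, S]$ with $q(x_1, \ldots, x_n, y) = 0$ and $(\partial_S q)(x_1, \ldots, x_n, y) \neq 0$ (the latter follows from separability: the common denominator $D(x_1,\ldots,x_n)$ is nonzero and $p'(y) \neq 0$). Applying $d$ to the identity $q(x_1, \ldots, x_n, y) = 0$ and iterating the Leibniz rule (using $dc = 0$ for $c \in K$) gives
\[
\sum_{i=1}^n (\partial_{T_i} q)(x_1, \ldots, x_n, y)\, dx_i + (\partial_S q)(x_1, \ldots, x_n, y)\, dy = 0,
\]
so $dy$ lies in the $L$-span of $dx_1, \ldots, dx_n$, as required.

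For linear independence, for each $j \in \{1, \ldots, n\}$ I construct a $K$-derivation $D_j \colon L \to L$ with $D_j(x_i) = \delta_{ij}$. First define $D_j$ on the purely transcendental subfield $K(x_1, \ldots, x_n)$ as the partial derivative $\partial/\partial x_j$ (which is straightforwardly a $K$-derivation); then extend to all of $L$ using Lemma~\ref{l:extend}. By the universal property of K\"ahler differentials, $D_j$ factors as $\tilde{D}_j \circ d$ for a unique $L$-linear map $\tilde{D}_j \colon \Omega_{L/K} \to L$, and this map satisfies $\tilde{D}_j(dx_i) = \delta_{ij}$. Applying $\tilde{D}_j$ to any relation $\sum_i a_i\, dx_i = 0$ with $a_i \in L$ forces $a_j = 0$.

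The only delicate point is the spanning step, where separability (hence characteristic zero) is needed to guarantee the nonvanishing of $(\partial_S q)(x_1, \ldots, x_n, y)$ so that we can solve for $dy$. Everything else is routine bookkeeping with the universal property of K\"ahler differentials and the extension-of-derivations lemma already proved in the text.
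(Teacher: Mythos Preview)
Your proof is correct, and the linear-independence half is essentially identical to the paper's: both construct $K$-derivations $D_j\colon L\to L$ with $D_j(x_i)=\delta_{ij}$ by first defining $\partial/\partial x_j$ on $K(x_1,\dots,x_n)$ and then invoking Lemma~\ref{l:extend} to extend to $L$, and both then appeal to the universal property of $d_{L/K}$ to kill a putative linear relation.

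The difference is that you also supply the spanning argument explicitly, whereas the paper's written proof only establishes linear independence and leaves the inequality $\dim_L(L\cdot d_{L/K}(F))\le \trd_K F$ tacit (it follows from the same minimal-polynomial computation underlying Lemma~\ref{l:algebraic}, but the paper does not spell this out). Your approach of clearing denominators to a polynomial $q\in K[T_1,\dots,T_n,S]$ and differentiating the relation $q(x_1,\dots,x_n,y)=0$ is the standard way to do this, and it is good that you flag separability as the reason $(\partial_S q)(x_1,\dots,x_n,y)\ne 0$. So your write-up is, if anything, more complete than the paper's on this point.
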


\begin{proof}
  Let $f_1, \dots, f_n$ be a transcendence basis for $F/K$.  Suppose that
\[ \sum_{i=1}^{n} a_i d_{L/K}f_i = 0 \] with $a_i \in L$.  
By the universal property of $d_{L/K}$, we have 
\[ \sum_{i=1}^{n} a_i D(f_i) = 0 \]
for any $K$-derivation $D_i \colon L \rightarrow L$.
Therefore, if we show that for each $i$ there exists a $K$-derivation $D_i \colon L \rightarrow L$  such that $D_i(f_j) = \delta_{ij}$, then we obtain $a_i =0$ for all $i$.  This yields the linear independence of the $d_{L/K} f_i$ over $L$.

The existence of the $D_i$ follows from the proof of Lemma~\ref{l:extend}.  We can first extend the 0 derivation on $K$ to $K(f_i)$ setting $z = f_i$, $u = 1 - f_i$ in 
(\ref{e:rule2}), and then inductively extend to $K(f_1, \dots, f_n)$ by setting $z = f_j, u = - f_j$ for $j \neq i$.  Finally we extend $D_i$ to $L$ using Lemma~\ref{l:extend} once more.
\end{proof}

\subsection{Derivation on Kahler differentials}

Let $A$ be a commutative ring and $B$ an $A$-algebra.
Let $D\colon B \longrightarrow B$ be a derivation such that $D(A) \subset A$.  There exists an $A$-linear map
\[ D^1 \colon \Omega_{B/A} \longrightarrow \Omega_{B/A} \]
satisfying
\begin{equation} \label{e:d1def}
D^1(f dg) = (Df) dg + f d(Dg). \end{equation}
We leave the verification of this to the reader, but we note that a more general fact is true.  If we consider 
the graded algebra of differentials
\[ \Omega^*_{B/A} = \bigoplus_{n=0}^{\infty} \ \bigwedge\nolimits_B^n \Omega_{B/A}, \] then the differential 
$D\colon B \longrightarrow B$ extends to a graded derivation \[ D^* \colon \Omega^*_{B/A} \longrightarrow \Omega^*_{B/A} \]
satisfying (\ref{e:derdef}), where the 0th graded piece is $D$ and the 1st graded piece is $D^1$.  In our proof of Ax's Theorem, we will only need the map $D^1$, but let us note that the rule (\ref{e:d1def}) generalizes: for any $f \in B$ and $\omega \in \Omega_{B/A}$, we have 
\begin{equation} \label{e:lrd1}
 D^1(f \omega) = (Df) \omega + f D^1(\omega). 
 \end{equation}

\begin{lemma} \label{l:yz}
 Let $y \in t \C[[t]], \ z = e^y,$ and let  $D \colon \C((t)) \longrightarrow \C((t))$ be a 
$\C$-derivation of the form $D(f(t)) = f'(t) \cdot g(t)$ for some fixed $g(t) \in \C((t))$.  
Then \[ D^1(dy - z^{-1}dz) = 0 \]
in $\Omega_{\C((t))/\C}$.
\end{lemma}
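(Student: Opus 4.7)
The plan is to reduce the lemma to a formal computation driven by the single identity $D(z) = z \cdot D(y)$, which captures the content of $z = e^y$ at the level of derivations. This identity follows from termwise differentiation of the exponential series: since $y \in t\C[[t]]$, the power series $z = e^y$ lies in $1 + t\C[[t]]$ and satisfies the ODE $z' = y' z$ in $\C[[t]]$; multiplying by $g(t)$ gives $D(z) = g \cdot z' = z \cdot (g \cdot y') = z \cdot D(y)$ in $\C((t))$.

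With this in hand, I apply $D^1$ to each summand of $dy - z^{-1} dz$ separately using the rules (\ref{e:d1def}) and (\ref{e:lrd1}). The first summand is immediate: $D^1(dy) = d(Dy)$. For the second, I expand
\[ D^1(z^{-1} dz) = D(z^{-1}) \, dz + z^{-1} \, d(Dz) = -z^{-1} D(y) \, dz + z^{-1} \, d\bigl(z \cdot D(y)\bigr), \]
invoking the key identity in both factors (once directly, once through $D(z^{-1}) = -z^{-2}D(z)$). Expanding $d(z \cdot D(y))$ by the Leibniz rule for $d$ produces a term $z^{-1} D(y) \, dz$ that exactly cancels the first, leaving $d(D(y))$. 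Subtracting the two computations yields the claimed vanishing.

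I do not anticipate a substantial obstacle beyond careful bookkeeping of the Leibniz applications: once the identity $D(z) = z D(y)$ is extracted from $z = e^y$, the rest is entirely formal. It is worth flagging, however, that $\Omega_{\C((t))/\C}$ is \emph{not} one-dimensional over $\C((t))$, since $\C((t))$ has uncountable transcendence degree over $\C$; in particular one should not expect the naive identity $dz = z \, dy$ to hold in $\Omega_{\C((t))/\C}$, and the lemma is a genuine statement about the interaction of the specific derivation $D^1$ with the combination $dy - z^{-1} dz$, rather than a tautology.
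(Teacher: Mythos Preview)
Your proof is correct and follows essentially the same route as the paper: both extract the identity $D(z) = z\,D(y)$ from $z = e^y$ and then compute $D^1$ on each term using the Leibniz rules (\ref{e:d1def}) and (\ref{e:lrd1}). The only cosmetic difference is that the paper first records the general identity $D^1(dy - z^{-1}dz) = d(Dy - z^{-1}Dz)$ and then specializes, whereas you substitute $Dz = z\,Dy$ mid-computation; the underlying calculation is identical.
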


\begin{proof}
A direct computation with the definition (\ref{e:d1def}) shows that in general, we have
\[ D^1(dy - z^{-1}dz) = d(Dy - z^{-1} Dz). \]
Yet when $z = e^y$, the term $Dy - z^{-1} Dz$ vanishes for the derivation  $D(f(t)) = f'(t) \cdot g(t)$.  The result follows.
\end{proof}

\begin{lemma} \label{l:kdk}
Let $K \subset L$ be fields, $D\colon L \longrightarrow L$ a derivation such that $\ker D = K$.
Then the map 
\[ L \otimes_K \ker D^1 \longrightarrow \Omega_{L/K}, \qquad f \otimes \omega \mapsto f\omega, \]
is injective.
\end{lemma}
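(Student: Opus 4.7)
The plan is to argue by a standard shortening-of-relations trick, exploiting the Leibniz-type rule \eqref{e:lrd1} for $D^1$ together with the hypothesis $\ker D = K$.

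More precisely, I would unravel the statement: injectivity of the map means that any $K$-linearly independent family $\omega_1, \dotsc, \omega_n \in \ker D^1$ remains $L$-linearly independent in $\Omega_{L/K}$. So suppose for contradiction that there is a nontrivial $L$-linear relation
\[ \sum_{i=1}^{n} f_i \omega_i = 0 \qquad (f_i \in L,\ \omega_i \in \ker D^1\ K\text{-linearly independent}) \]
and choose such a relation with $n$ minimal. After discarding zero coefficients we may assume $f_i \neq 0$ for all $i$; dividing through by $f_n$ we may further assume $f_n = 1$. Write $g_i = f_i$ for brevity, so $\sum_{i<n} g_i \omega_i + \omega_n = 0$.

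Now I would apply $D^1$ to this identity. By the Leibniz rule \eqref{e:lrd1} and the fact that $\omega_i \in \ker D^1$, we get
\[ D^1\!\left(\sum_{i<n} g_i \omega_i + \omega_n\right) = \sum_{i<n} (D g_i)\,\omega_i = 0, \]
a strictly shorter $L$-linear relation among the same $K$-linearly independent family $\omega_1, \dotsc, \omega_{n-1} \in \ker D^1$. Minimality of $n$ forces $D g_i = 0$ for every $i < n$, so $g_i \in \ker D = K$. But then the relation $\sum_{i<n} g_i \omega_i + \omega_n = 0$ is a nontrivial $K$-linear dependence among $\omega_1, \dotsc, \omega_n$ (the coefficient of $\omega_n$ is $1$), contradicting their $K$-linear independence. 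The base case $n=1$ is handled separately and trivially: $f_1 \omega_1 = 0$ with $\omega_1 \neq 0$ in $\Omega_{L/K}$ forces $f_1 = 0$ since $L$ is a field and $\Omega_{L/K}$ is an $L$-module.

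I do not foresee a serious obstacle. The only subtle point is making sure the application of $D^1$ produces a \emph{strictly shorter} relation with the same $\omega_i$'s, which requires that rescaling to $f_n = 1$ eliminates the $\omega_n$ term upon differentiation (since $D(1)=0$). This is exactly what forces the shortening to work, and the hypothesis $\ker D = K$ is precisely what is needed to conclude that the vanishing coefficients come from the base field.
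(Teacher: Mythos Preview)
Your proof is correct and takes essentially the same approach as the paper's: a minimal-counterexample argument where one normalizes a coefficient to $1$, applies $D^1$ via the Leibniz rule \eqref{e:lrd1}, uses $\omega_i \in \ker D^1$ to kill the $f_i D^1(\omega_i)$ terms, and then invokes $\ker D = K$ to force the remaining coefficients into $K$. The only cosmetic difference is that you begin by reformulating injectivity as ``$K$-linearly independent elements of $\ker D^1$ remain $L$-linearly independent in $\Omega_{L/K}$'' and derive a contradiction to that, whereas the paper works directly with a tensor $\sum f_i \otimes \omega_i$ mapping to zero and shows it must itself be zero; the underlying mechanism is identical.
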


\begin{proof}
 Suppose there exist \[ f_1, \dots, f_n \in L^*, \qquad \omega_1, \dots, \omega_n \in \ker D^1 \] such that \begin{equation} \label{e:sfw}
 \sum_{i=1}^{n} f_i \otimes \omega_i \mapsto 0, \qquad \text{i.e. } \sum_{i=1}^{n} f_i \omega_i =0.
 \end{equation}
 Scale so that $f_1 = 1.$
If all the $f_i$ lie in $K$, 
then 
\[  \sum_{i=1}^{n} f_i \otimes \omega_i = 1 \otimes  \sum_{i=1}^{n} f_i  \omega_i = 1 \otimes 0 = 0, \]
so we are done.  Suppose this is not the case and take the minimal such vanishing linear combination. 
By minimality, we can assume that the $\omega_i$ are linearly independent over $L$.
 Apply $D^1$ to the expression (\ref{e:sfw}).  Using (\ref{e:lrd1}) we find
   \[ 0 = \sum_{i=1}^n ((Df_i) \omega_i + f_i D^1(\omega_i)) =  \sum_{i=1}^n (Df_i) \omega_i, \]
   where the second equality holds since $\omega_i \in \ker D^1$.  By the linear independence of the $\omega_i$ over $L$, we see that $Df_i = 0$ for all $i$ and hence by assumption $f_i \in K$ for all $i$.  This gives the desired result.
\end{proof}

The following is a technical algebraic lemma that will allow us to reduce to the setting of function fields of curves.

\begin{lemma} \label{l:w} Let $K \subsetneq L$ be an extension of fields with $K$ relatively algebraically closed in $L$.  Let 
\begin{align*}
 W = \{ & F\colon K \subset F \subset L, \ \  \trd_F L = 1, \\
& F \text{ relatively algebraically closed in } L \}.
\end{align*}
Then \[ \bigcap_{F \in W} F = K. \]
\end{lemma}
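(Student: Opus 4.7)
The plan is to prove the nontrivial inclusion $\bigcap_{F \in W} F \subset K$ by producing, for each $x \in L \setminus K$, an element $F \in W$ with $x \notin F$. The reverse inclusion $K \subset \bigcap_{F \in W} F$ is immediate from the definition of $W$.

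Fix $x \in L \setminus K$. Since $K$ is relatively algebraically closed in $L$ and $x \notin K$, the element $x$ must be transcendental over $K$; hence I can extend $\{x\}$ to a transcendence basis $B$ of $L$ over $K$. Setting $B' = B \setminus \{x\}$ and $F_0 = K(B')$, the singleton $\{x\}$ is a transcendence basis for $L$ over $F_0$, so $\trd_{F_0} L = 1$. I then take $F$ to be the relative algebraic closure of $F_0$ in $L$: by construction $F$ is relatively algebraically closed in $L$, and since $F/F_0$ is algebraic we still have $\trd_F L = 1$. Thus $F \in W$.

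It remains to verify $x \notin F$. If $x$ lay in $F$, then $x$ would be algebraic over $F_0 = K(B')$, contradicting the algebraic independence of $B = \{x\} \cup B'$ over $K$. Hence $F$ is an element of $W$ avoiding $x$, completing the proof.

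I do not anticipate a serious obstacle here: the entire argument hinges on being able to extend $\{x\}$ to a transcendence basis of $L/K$, and this in turn relies crucially on the hypothesis that $K$ is relatively algebraically closed in $L$ (so that $x \notin K$ forces $x$ to be transcendental over $K$). The one degenerate case worth noting is $\trd_K L = 1$, in which $B' = \emptyset$ and the construction gives $F = K$ itself, so $K \in W$ and the intersection is trivially $K$.
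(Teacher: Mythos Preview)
Your proof is correct and follows essentially the same approach as the paper: both arguments take $x\in L\setminus K$, use the hypothesis that $K$ is relatively algebraically closed in $L$ to conclude $x$ is transcendental over $K$, extend $\{x\}$ to a transcendence basis, and let $F$ be the relative algebraic closure in $L$ of the field generated over $K$ by the remaining basis elements. Your write-up is in fact slightly more explicit than the paper's in justifying $x\notin F$ and in noting the degenerate case $\trd_K L=1$.
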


\begin{proof}
 Let $t \in L$, $t \not\in K$.  We need to show there exists $F \in W$ such that $t \not \in F$.  Since $K$ is relatively algebraically closed in $L$, the element $t$ is transcendental over $K$.

Choose a transcendence basis for $L/K$ consisting of $t$ and a set $B$ of elements not in $K(t)$.  Let $F$ be the relative algebraic closure of $K(B)$ in $L$, i.e. 
\[ F = \{ x\in L\colon x \text{ algebraic  over } K(B)\}. \]
Then $F \in W$, since $L$ is algebraic over $F(t)$.  Since $t \not\in F$, this completes the proof. \end{proof}

In some sense, the following lemma is the main engine of Ax's proof.

\begin{lemma} \label{l:kmap}
Let $L/K$ be fields of characteristic 0. Denote by
$dL$ the $K$-subspace of  $\Omega_{L/K}$  spanned by $df$ for $f \in L$.  Denote by
$dL/L$ the $\Z$-submodule of $\Omega_{L/K}$ spanned by $f^{-1} df$ for $f \in L^*$. 
Then the canonical map of $K$-vector spaces 
\begin{equation} \label{e:kmap}
 K \otimes_{\Z} dL/L \longrightarrow \Omega_{L/K}/dL, \qquad k \otimes \frac{df}{f} \mapsto \frac{k}{f}df, 
 \end{equation}
is injective, where $\Omega_{L/K}/dL$ denotes the quotient of $\Omega_{L/K}$ by the $K$-subspace spanned by $df$ for $f \in L$. 
\end{lemma}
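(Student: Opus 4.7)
The strategy is to reduce to the case $\trd_K L = 1$ (where $L$ is the function field of a curve over $K$) and apply the classical residue theory, then handle higher transcendence degree by induction using Lemma~\ref{l:w}.

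I would start with two preliminary reductions. First, replace $K$ by its relative algebraic closure $K' = \overline{K} \cap L$: by Lemma~\ref{l:algebraic}, $dk = 0$ for $k \in K'$, so $\Omega_{L/K} = \Omega_{L/K'}$ and $dL$ (equally a $K$- or $K'$-subspace) is unchanged, while $K \otimes_\Z dL/L \hookrightarrow K' \otimes_\Z dL/L$ since $K'$ is a free $K$-module. Second, given $\alpha = \sum k_i \otimes df_i/f_i$ with $\sum k_i df_i/f_i = dg$, restrict to $L_0 = K(f_i, g)$; by char-$0$ separability $\Omega_{L_0/K} \hookrightarrow \Omega_{L/K}$, hence $L_0^*/K^* \hookrightarrow L^*/K^*$, and tensoring with the $\Z$-flat $K$ preserves injectivity. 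Henceforth assume $K$ is algebraically closed in $L$ and $L/K$ is finitely generated; then $dL/L \cong L^*/K^*$ via $f \mapsto df/f$.

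The main proof proceeds by induction on $n = \trd_K L$. The case $n = 0$ is trivial since $\Omega_{L/K} = 0$. For $n = 1$, $L$ is the function field of a smooth projective curve $C$ over $K$, and for each closed point $P$ the residue map $\res_P \colon \Omega_{L/K} \to K$ vanishes on $dL$ and satisfies $\res_P(df/f) = \ord_P(f)$. Applying $\res_P$ to $\sum k_i df_i/f_i = dg$ yields $\sum_i k_i \ord_P(f_i) = 0$ in $K$ for every $P$. Since $L^*/K^* \hookrightarrow \Div(C)$ (a nonzero regular function on a complete curve is constant) and $K$ is $\Z$-flat, $K \otimes_\Z dL/L$ injects into $\bigoplus_P K \cdot [P]$; place-by-place vanishing forces $\alpha = 0$.

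For the inductive step $n \ge 2$, invoke Lemma~\ref{l:w} to choose $F \in W$ with $\trd_F L = 1$, $\trd_K F = n - 1$, and $F$ algebraically closed in $L$. Writing $N_F$ for the analogous $\Z$-submodule of $\Omega_{L/F}$ (canonically $\cong L^*/F^*$), the short exact sequence $0 \to F^*/K^* \to L^*/K^* \to L^*/F^* \to 0$, tensored with the flat $K$, identifies $K \otimes_\Z dF/F$ with the kernel of $K \otimes_\Z dL/L \to K \otimes_\Z N_F$. Given $\alpha$ in the kernel of the lemma's map, project $\sum k_i df_i/f_i = dg$ to $\Omega_{L/F}$ and apply the $n=1$ case to $(F, L)$, combined with the injection $K \otimes_\Z N_F \hookrightarrow F \otimes_\Z N_F$: the image of $\alpha$ in $K \otimes_\Z N_F$ vanishes, so $\alpha \in K \otimes_\Z dF/F$ and may be rewritten with $f_i \in F^*$. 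Then $\sum k_i df_i/f_i$ vanishes in $\Omega_{L/F}$ (each $df_i = 0$ there), forcing $dg$ to vanish as well, hence $g \in F$; the equation therefore lives in $\Omega_{F/K}$, and the inductive hypothesis applied to $(K, F)$ gives $\alpha = 0$ in $K \otimes_\Z dF/F$, and hence in $K \otimes_\Z dL/L$. The main obstacle is the residue theory used in the $n = 1$ base case, which lies outside the excerpt; I would either cite it or briefly develop it by passing to completions $\widehat{\mathcal O}_{C,P} \cong K[[t_P]]$ and defining $\res_P(\omega) = \coeff_{t_P^{-1}}(\omega/dt_P)$, verifying $\res_P(dg) = 0$ and $\res_P(df/f) = \ord_P(f)$ directly.
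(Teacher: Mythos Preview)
Your argument is correct and reaches the same conclusion, but the architecture differs from the paper's. The paper does not induct on transcendence degree: it takes a \emph{minimal} expression $\sum_i k_i \otimes df_i/f_i$ in the kernel (so that the $k_i$ are $\Q$-linearly independent), and then for \emph{every} $F \in W$ passes to a finitely generated subextension $L'/F$, applies residues on the associated curve to obtain $\sum_i k_i \ord_P(f_i) = 0$, and uses the $\Q$-independence of the $k_i$ to force $\ord_P(f_i)=0$ for each $i$ individually, whence $f_i \in F$. Ranging over all $F\in W$, Lemma~\ref{l:w} gives $f_i \in \bigcap_{F\in W} F = \overline{K}_L$, so $df_i=0$. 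Your route instead picks a \emph{single} $F$ with $\trd_F L = 1$, uses the curve case together with $\Z$-flatness of $K$ (rather than a minimality argument) to push $\alpha$ into $K\otimes_\Z dF/F$, shows $g\in F$, and then invokes the inductive hypothesis for $(K,F)$. Your approach is more modular and only needs $W\neq\emptyset$ rather than the intersection statement of Lemma~\ref{l:w}; the paper's approach avoids the inductive scaffolding and the exact-sequence bookkeeping, and makes the role of Lemma~\ref{l:w} central. Both rest on the same residue-theoretic input, which the paper also takes as ``well known.'' One cosmetic point: your citation of Lemma~\ref{l:w} in the inductive step is slightly misleading, since you use only the existence of some $F\in W$, not the intersection property.
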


\begin{proof}
  Choose an element \begin{equation} \label{e:ourelement}
  \sum_{i=1}^{n} k_i \otimes f_i^{-1}df_i
  \end{equation} in the kernel of the map (\ref{e:kmap}), with $n$ minimal.  By minimality, the $k_i$ are linearly independent over $\Q$.  We will show that each $f_i$ lies in $\overline{K}_L$, the relative algebraic closure of $K$ in $L$.  By Lemma~\ref{l:algebraic}, this will imply $df_i =0$, giving the desired injectivity.

If $L = \overline{K}_L$, there is nothing to prove.  Otherwise let $F \in W$, with $W$ as in Lemma~\ref{l:w}.   So $K \subset F \subset L$, $\trd_F L =1$, and $\overline{F}_L = F$.  Since the element (\ref{e:ourelement}) lies in the kernel of (\ref{e:kmap}), we have
\begin{equation} \label{e:kf}
 \sum_{i=1}^n k_i f_i^{-1} df_i = \sum_{i=1}^m k_i' df_i'. \end{equation}
for some $f_i' \in L^*$, $k_i' \in K$.  Now, we would like to use  properties of function fields of curves, but unfortunately, we do not know that $L$ is finitely generated over $F$.  To this end, we consider a field $L'$ generated over $F$ by the $f_i$, the $f_i'$, and by 
any elements used in any relations in $\Omega_{L/K}$ used to obtain equation (\ref{e:kf}).  The field $L'$ then still has transcendence degree 1 over $F$, and is finitely generated over $F$.  We may therefore identify $L'$ with the function field of a smooth projective algebraic curve over $F$.  
Furthermore, the equation (\ref{e:kf}) still holds in $\Omega_{L'/K}$ by construction, and so it holds also in $\Omega_{L'/F}$.

Points $P$ on this curve correspond to valuations
\[ \ord_P \colon (L')^* \longrightarrow \Z. \]
Associated to $P$ we also have a residue map
\[ \res_P \colon \Omega_{L'/F} \longrightarrow F. \]
The residue and valuation map satisfy the following well known properties.  
For all $g \in (L')^*$, we have \[ \res_P(g^{-1}dg) = \ord_P(g), \qquad   \res_P(dg) = 0. \]
Applying $\res_P$ to  (\ref{e:kf}), we get
\[ \sum_{i=1}^{n} k_i \ord_P(f_i) = 0. \]
By $\Q$-linear independence of the $k_i$, we obtain $\ord_P(f_i) = 0$ for all $P, i$.  But a function on a smooth projective curve with no zeroes or poles must be constant, and hence $f_i \in F$ for all $i$.
Since this holds for all $F$, we have by Lemma~\ref{l:w} that $f_i \in \overline{K}_L$.  This is the desired result.
\end{proof}

We can now complete the proof of Ax's Theorem.
\begin{proof}[Proof of Theorem~\ref{t:ax}]
Let $y_1, \dots, y_n \in t \C[[t]]$ and write $z_i = e^{y_i} \in \C[[t]].$
   Let \[ L = \C(y_1, \dots, y_n, z_1, \dots, z_n). \]  It suffices to show that if $\trd_\C L \le n$, 
   then $y_1, \dots, y_n$ are $\Q$-linearly dependent.
Suppose $\trd_\C L \le n$.  Then by Corollary~\ref{c:trd}, the differentials \[ \omega_i = dy_i - z_i^{-1} dz_i \in \Omega_{L/\C} \]
 for $i=1, \dots, n$ together with $dy_1$ must be linearly dependent over $L$:
\begin{equation} \label{e:fwgy}
 \sum_{i=1}^n f_i \omega_i + g dy_1 = 0, 
 \end{equation}
with $f_i, g \in L$ not all zero.  Note that if $y_1'(t) =0$, then $y_1$ is a constant, and since $y_1 \in t \C[[t]]$ we would get $y_1 =0$.  Then the $y_i$ are trivially linearly dependent; so we may assume hereafter that $y_1'(t) \neq 0$.
Define a $\C$-derivation \[ D\colon L \longrightarrow L, \qquad D(f(t)) = f'(t)/y_1'(t). \]
By Lemma~\ref{l:yz}, we have $D^1(\omega_i)=0$.
Furthermore, a direct computation shows
\[ D^1(d y_1) = d(Dy_1) = d(1) = 0. \]

Therefore, we have that
\[ \sum f_i \otimes \omega_i + g \otimes dy_1 \in \ker((L\otimes_\C \ker D^1) \longrightarrow \Omega_{L/\C}). \]
By Lemma~\ref{l:kdk}, we may assume $f_i, g \in \C$.

Rewrite the equation $\sum f_i \omega_i + g dy_1 = 0$ in the form
\[ \sum_{i=1}^{n} f_i \cdot ( - z_i^{-1} dz_i) = -\sum_{i=1}^n f_i d y_i - g dy_1. \]
Lemma~\ref{l:kmap} implies that either all $f_i = 0$, or the $z_i^{-1} dz_i$ are $\Q$-linearly dependent.
In the first case, from (\ref{e:fwgy}) and the fact that the $f_i$, $g$ are not all zero we would get $dy_1 = 0$.  Hence $y_1$ is a constant, and as noted earlier this implies that $y_1 =0$.  Therefore we suppose we are in the second case, say
\[ \sum m_i (dz_i)/z_i = 0 \]
with $m_i \in \Z$ not all zero.  This implies implies
\[ d\left(\prod z_i^{m_i}\right)/ \left(\prod z_i^{m_i} \right)= 0, \]
so $\prod z_i^{m_i} = e^{\sum m_i y_i}$ is a constant.  By considering constant terms, this constant must be 1. Therefore
\[ \sum m_i y_i = 0, \]
giving the desired linear dependence of the $y_i$ over $\Q$. 
\end{proof}
 
 \section{The Structural Rank Conjecture} \label{s:src}
 
 We now return to the classical setting over $\C$, rather than the function field setting, and move on to consider matrices of elements of $\sL$.  The simplest case of $2 \times 2$ matrices leads to the following {\bf Four Exponentials Conjecture}.
 
 \begin{conjecture} Let $M \in M_{2\times 2}(\sL)$.  Then $\det(M) =0$ only if the rows or columns of $M$ are linearly dependent over $\Q$.
 \end{conjecture}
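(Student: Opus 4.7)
My plan is to recognize this as the $2\times 2$ case of the Structural Rank Conjecture, and hence, via Theorem~\ref{t:roy}, as a consequence of the case (\ref{e:scs}) of Schanuel's Conjecture. Fix a $\Q$-basis $\ell_1, \dots, \ell_r$ of the $\Q$-span of the entries of $M$ and write $M = \sum_{i=1}^r \ell_i M_i$ with $M_i \in M_{2 \times 2}(\Q)$. Let $V = \mathrm{span}_\Q(M_1, \dots, M_r) \subset M_{2 \times 2}(\Q)$; the structural rank of $M$ is then the generic rank of the family $V$, equivalently the rank of $M_x = \sum x_i M_i$ over $F = \Q(x_1, \dots, x_r)$.

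The key lemma I would prove is: the structural rank of $M$ is $<2$ if and only if the rows or the columns of $M$ are $\Q$-linearly dependent. The reverse implication is immediate, since any relation $\alpha r_1 + \beta r_2 = 0$ with $\alpha, \beta \in \Q$ on the rows of $M$ forces, by the $\Q$-linear independence of $\ell_1, \dots, \ell_r$, the same relation on the rows of every $M_i$ and hence on $M_x$. For the forward direction, structural rank $<2$ means that $V$ consists entirely of rank-$\le 1$ matrices, i.e.\ that the quadratic form $\det$ vanishes identically on $V$. A short direct argument---any two nonzero elements $u_1 v_1^T,\, u_2 v_2^T \in V$ must have sum of rank $\le 1$, which forces either $u_1, u_2$ or $v_1, v_2$ to be proportional, and one iterates this through $V$---shows that $V$ is contained in a compression space of the form $\{u w^T : w \in \Q^2\}$ for some fixed $u \in \Q^2$, or $\{w v^T : w \in \Q^2\}$ for some fixed $v \in \Q^2$. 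In the first case $M = u w^T$ with $w = \sum_i \ell_i w_i \in \C^2$, and the two rows of $M$ are $u_1 w^T$ and $u_2 w^T$, which are $\Q$-proportional; the second case is symmetric.

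Granting this equivalence, the conjecture follows at once from the Structural Rank Conjecture: if $\det M = 0$ then $\rank M \le 1$, so structural rank $= \rank M \le 1$, and hence rows or columns of $M$ are $\Q$-linearly dependent. By Theorem~\ref{t:roy}, it would in fact suffice to assume the special case (\ref{e:scs}) of Schanuel's Conjecture. The main obstacle is of course that Schanuel's Conjecture is deeply open; this particular instance, the \emph{Four Exponentials Conjecture}, is itself a notorious unsolved problem in transcendence theory. Unconditionally, Theorem~\ref{t:wm} yields no information in the $2 \times 2$ case because its hypothesis $\rank M < mn/(m+n) = 1$ forces $M = 0$; the strongest known result in this direction is the Six Exponentials Theorem of Siegel--Lang--Ramachandra, which settles the analogous statement for $2 \times 3$ matrices.
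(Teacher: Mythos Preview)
The statement is a \emph{conjecture}---the Four Exponentials Conjecture---and the paper does not prove it; immediately after stating it the paper writes ``It remains wide open'' and offers the Six Exponentials Theorem (Theorem~\ref{t:six}) as the strongest known evidence. Your proposal correctly recognizes this and, rather than attempting an unconditional proof, gives a clean conditional derivation from the Structural Rank Conjecture (equivalently, via Theorem~\ref{t:roy}, from the special case (\ref{e:scs}) of Schanuel). Your key lemma---that for $M \in M_{2\times 2}(\sL)$ the structural rank is $<2$ if and only if the rows or columns of $M$ are $\Q$-linearly dependent---is correct; the forward direction is the classical fact that a $\Q$-linear subspace of $M_{2\times 2}(\Q)$ on which $\det$ vanishes identically lies in one of the two rulings of the smooth quadric $\det=0$ in $\mathbf{P}^3$, i.e.\ in $\{u w^T\}$ or $\{w v^T\}$. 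Your sketch of this via ``iterating through $V$'' is a little brisk but easily made rigorous along the lines you indicate.

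In short, your treatment is correct and in fact goes further than the paper, which merely states the conjecture and places it in historical context without spelling out the reduction to the Structural Rank Conjecture in the $2\times 2$ case. There is no genuine gap in your reasoning; the only ``obstacle'' is the one you name yourself, namely that the Four Exponentials Conjecture is open.
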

 
 This conjecture was first stated in print in 1957 by Schneider \cite{schneider}, though versions had been considered over the previous two decades by Selberg, Siegel, Alaoglu--Erdos \cite{ae}, and others.   It remains wide open.  The strongest theoretical evidence for the conjecture is the following {\bf Six Exponentials Theorem}.
 
  \begin{theorem} \label{t:six}  Let $M \in M_{2\times 3}(\sL)$.  Then $\rank(M) < 2$ only if the rows or columns of $M$ are linearly dependent over $\Q$.
 \end{theorem}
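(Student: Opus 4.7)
The plan is to reduce Theorem~\ref{t:six} to the classical ``six exponentials'' configuration and then run the Siegel--Schwarz--discreteness machinery of \S2, which here is simpler than in Baker's theorem because no linear dependence relation forces us to carry along derivatives of the auxiliary function. For the reduction: if any row or column of $M$ vanishes the conclusion is trivial, so assume all are nonzero. Then $\rank(M) \le 1$ makes the second row a $\C$-multiple of the first; writing the rows as $(x_1, x_2, x_3)$ and $(\lambda x_1, \lambda x_2, \lambda x_3)$, a rational $\lambda$ gives the desired $\Q$-dependence of the rows, and $\Q$-dependence of $x_1, x_2, x_3$ gives $\Q$-dependence of the columns. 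So we may assume that $y_1 := 1$ and $y_2 := \lambda$ are $\Q$-linearly independent in $\C$, that $x_1, x_2, x_3 \in \sL$ are $\Q$-linearly independent, and that $\alpha_{ij} := e^{y_i x_j} \in \overline{\Q}^*$ for $1 \le i \le 2$ and $1 \le j \le 3$. The task is to derive a contradiction.

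Following \S2.1, I would choose a large integer $L$, set $S$ so that $(L+1)^2 > 2 d (S+1)^3$ with $d = [\Q(\alpha_{ij}) \colon \Q]$, and apply Siegel's lemma to find a nonzero integer vector $(p_{k_1, k_2})_{0 \le k_1, k_2 \le L}$ with $|p_{k_1, k_2}| \le e^{O(LS)}$ such that the entire function
\[ \Phi(z) = \sum_{k_1, k_2=0}^L p_{k_1, k_2} e^{(k_1 y_1 + k_2 y_2) z} \]
vanishes at every point $z = m_1 x_1 + m_2 x_2 + m_3 x_3$ with $(m_1, m_2, m_3) \in \{0, \ldots, S\}^3$. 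This is feasible because each such value is a $\Z$-combination of monomials $\prod_{i,j} \alpha_{ij}^{k_i m_j}$, hence lies in $\Q(\alpha_{ij})$ with denominator and conjugate heights of size at most $C^{LS}$, giving $d (S+1)^3$ rational conditions on the $(L+1)^2$ integer unknowns.

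Next I would bootstrap in the style of \S\S2.2--2.4. Given that $\Phi$ already vanishes on $\{0, \ldots, S'\}^3$, applying the maximum modulus principle to $\Phi(z) \big/ \prod_{0 \le n_j \le S'}(z - n_1 x_1 - n_2 x_2 - n_3 x_3)$ on a circle of radius $R \gg S'$, together with the bound $|\Phi(w)| \le \exp(O(LS) + O(L|w|))$ inherent to its shape, yields an upper bound on $|\Phi(z)|$ at every $z = m_1 x_1 + m_2 x_2 + m_3 x_3$ with $m_j \le BS'$ (for some fixed ratio $B > 1$) that is far smaller than the reciprocal of the product of its denominator and conjugate heights. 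Lemma~\ref{l:discrete} then forces $\Phi(z) = 0$ there. Iterating, $\Phi$ eventually vanishes at every $m_1 x_1 + m_2 x_2 + m_3 x_3$ with $(m_1, m_2, m_3) \in \Z_{\ge 0}^3$.

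To conclude, write $\Phi(m_1 x_1 + m_2 x_2 + m_3 x_3) = \sum_{k_1, k_2} p_{k_1, k_2} \chi_{k_1, k_2}(m_1, m_2, m_3)$, where $\chi_{k_1, k_2} \colon \Z_{\ge 0}^3 \to \C^*$ is the monoid homomorphism $(m_j) \mapsto \prod_j (e^{(k_1 y_1 + k_2 y_2) x_j})^{m_j}$. The $\Q$-linear independence of $y_1, y_2$ combined with that of $x_1, x_2, x_3$ (and the fact that no $x_j$ is zero) forces the $(L+1)^2$ characters $\chi_{k_1, k_2}$ to be pairwise distinct, hence $\C$-linearly independent on $\Z_{\ge 0}^3$ by Dedekind's theorem on independence of characters. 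So every $p_{k_1, k_2}$ vanishes, contradicting the nonvanishing of Siegel's solution. The main obstacle will be the parameter juggling in the Schwarz/discreteness bootstrap, a direct analogue of \S\S2.2--2.4 but lighter because no derivatives of $\Phi$ need to be tracked.
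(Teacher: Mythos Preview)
Your proof is correct and follows the classical Lang--Ramachandra route, but it is genuinely different from the paper's argument. The paper does not prove Theorem~\ref{t:six} directly; instead it deduces it in \S\ref{s:wm} as a one-paragraph corollary of the Waldschmidt--Masser theorem: since $1 < 6/5 = mn/(m+n)$, Theorem~\ref{t:wm} produces $P \in \GL_2(\Q)$, $Q \in \GL_3(\Q)$ such that $PMQ$ has a zero block of size $1 \times 2$ or $2 \times 1$, and a short case analysis on the resulting shape forces a $\Q$-relation among the rows or columns.

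Your approach instead reduces to the classical six-exponentials configuration and runs the Siegel--Schwarz--discreteness machine of \S2 directly, with the simplification that no derivatives enter (the key numerical point being $2 \cdot 3 > 2 + 3$, which makes the bootstrap inequality $S^3 \gg L \cdot S^{1+\delta}$ hold once $L \sim S^{3/2}$). Your endgame via Dedekind independence of the characters $\chi_{k_1,k_2}$ is clean and correct: the $\Q$-independence of $y_1,y_2$ and of $x_1,x_2,x_3$ forces the characters to be pairwise distinct, exactly as you say. The trade-off is that your proof is self-contained given \S2 and historically prior (Lang and Ramachandra proved it this way before Waldschmidt--Masser existed), whereas the paper's proof is shorter but depends on the much heavier machinery of \S\ref{s:wm}, and in exchange exhibits the six exponentials theorem as the smallest instance of a general structural phenomenon.
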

 
 The Six Exponentials Theorem was proven independently by Lang \cite{lang} and Ramachandra \cite{ram} in the late 1960s.  
 See Waldschmidt's delightful personal account \cite{fexp} for details and references on the history of the Four Exponentials Conjecture and the Six Exponentials Theorem.
 
 The Six Exponentials Theorem follows as a special case of the theorem of Waldschmidt--Masser that we will discuss later in this paper.  A naive generalization of the Four Exponentials Conjecture to matrices of arbitrary dimension does not hold---in general, matrices may have lower than maximal rank even if the rows and columns are linearly independent over $\Q$.  As an example, note that
 \[ \det \begin{pmatrix}
 x & z & 0 \\
0 & y & -x \\
y & 0 & z 
\end{pmatrix} = 0.
 \]
Therefore, if we substitute for $x$, $y$, and $z$ any elements of $\sL$ that are linearly independent, then we obtain a matrix of rank $< 3$ whose rows and columns are linearly independent over $\Q$.   Examples such as these motivate the {\bf Structural Rank Conjecture} that was stated precisely in the introduction.  The matrix above has structural rank equal to 2.
 
 \subsection{The $p$-adic setting}
 
 Most statements in transcendence theory have analogs in the $p$-adic setting.  As we will describe below, these analogs are particularly important in Iwasawa theory.  Let $p$ be a prime number, and let
$\C_p = \hat{\overline{\Q}}_p$ denote the completion of the algebraic closure of $\Q_p$.
The statements below work equally well over $\Q_p$, but working with $\C_p$ provides extra generality.
There exist a $p$-adic logarithm and a $p$-adic exponential function
\begin{equation}
\begin{aligned} \label{e:logp}
 \log_p &\colon \{ x \in \C_p \colon |x - 1| < 1 \} \longrightarrow \C_p  \\
\exp_p & \colon \{ x \in \C_p \colon |x| < p^{-1/(p-1)}\} \longrightarrow \C_p \end{aligned}
\end{equation}
defined by the usual power series
\[  \log_p(1-x) = -\sum_{n=1}^{\infty} \frac{x^n}{n},  \qquad \exp_p(x) = \sum_{n=1}^{\infty} \frac{x^n}{n!}.  \]
The functions $\log_p$ and $\exp_p$ are injective group homomorphism on the domains given in (\ref{e:logp}).
The $p$-adic logarithm extends uniquely to a continuous homomorphism
\[ \log_p \colon \{ x \in \C_p \colon |x| = 1 \} \longrightarrow \C_p \]
since every $x \in \C_p$ with $|x| = 1$ satisfies $|x^n - 1| < 1$ for an appropriate positive integer $n$, and we may define $\log_p(x) = \frac{1}{n}\log_p(x^n)$.  Next we extend $\log_p$ to a continuous homomorphism
\[ \log_p \colon \C_p^* \longrightarrow \C_p \]
by fixing Iwasawa's (noncanonical) choice $\log_p(p) =0$.  The kernel of $\log_p$ on $\C_p^*$ then consists of elements of the form $p^a \cdot u$ where $a \in \Q$ and $u$ is a root of unity.

We  define the $\Q$-vector space of $p$-adic logarithms of algebraic numbers:
\[ \sL_p = \{ \log_p(x) \colon x \in \overline{\Q}^*\} \subset \C_p. \]
The $p$-adic version of Baker's Theorem was proved by Brumer following Baker's method.

\begin{theorem}[Baker--Brumer]   \label{t:brumer}
 Let $y_1, \dots, y_n \in \sL_p$ be linearly independent over $\Q$.  Then $y_1, \dots, y_n$ are linearly independent over $\overline{\Q}$.
 \end{theorem}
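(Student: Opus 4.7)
The plan is to adapt each step of Baker's proof above to the non-archimedean setting, keeping the same architecture but replacing the complex-analytic ingredients with their $p$-adic counterparts. First I would reduce the statement to a multiplicative one exactly as in Section~2: writing $y_i = \log_p \alpha_i$ with $\alpha_i \in \overline{\Q}^*$, and replacing each $\alpha_i$ by a suitable power $\alpha_i^{p^N}$, one may assume $|y_i|_p < p^{-1/(p-1)}$, so that $\alpha_i = \exp_p(y_i)$ and in fact $\alpha_i^z = \exp_p(z y_i)$ converges for every $z \in \Z_p$. A purported algebraic relation $\beta_1 y_1 + \cdots + \beta_n y_n = 0$ with $\beta_i \in \overline{\Q}$ should then yield integers $\lambda_1,\dots,\lambda_n$, not all zero, with $\prod \alpha_i^{\lambda_i} = 1$, from which $\Q$-linear dependence follows upon applying $\log_p$. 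The Vandermonde argument of Theorem~\ref{t:vand} is purely algebraic and applies over $\C_p$ without change.

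The auxiliary polynomial construction proceeds verbatim: Siegel's lemma produces a nonzero $f(t_1,\dots,t_n) \in \Z[t_1,\dots,t_n]$ of bounded degree and height such that the $p$-adic analytic function $\phi(z) = f(\alpha_1^z,\dots,\alpha_n^z) = \sum_\lambda p_\lambda \exp_p(z(\lambda_1 y_1 + \cdots + \lambda_n y_n))$ and many of its derivatives vanish at $z = 1, 2, \dots, h$. The algebraic relation among the $y_i$ is used exactly as in (\ref{e:phidef2}) to eliminate $y_n$ and force the additional vanishing of iterated derivatives.

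The heart of the matter is the replacement of Baker's Lemma: for a function $g$ that is $p$-adic analytic on a closed disk of radius $r$, the maximum of $|g(z)|_p$ on the disk equals the Gauss norm of its power series, so the same factorization $h(z) = g(z)/\prod_{i=1}^A (z-i)^C$ combined with the non-archimedean maximum modulus yields an analogous bound on $|g(z)|_p$ for $|z|_p \le rB$. Because $|z-i|_p \le 1$ for integer $z, i$, the ultrametric estimate is in fact cleaner than its archimedean counterpart. The discreteness step of Lemma~\ref{l:discrete} is then replaced by an application of the product formula for $\Q(a)$: if $|a|_p$ is extremely small while every archimedean absolute value $|\sigma(a)|$ is bounded and $da$ is an algebraic integer for a small integer $d$, then $\prod_v |a|_v < 1$ is impossible unless $a = 0$. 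Bootstrapping now proceeds identically until enough zeroes have accumulated to invoke the Vandermonde argument.

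The main obstacle is the careful setup of the $p$-adic analytic framework: one must choose the initial exponent $N$ large enough that $\phi(z)$ and the functions $f_{m_1,\dots,m_{n-1}}(z)$ converge on disks big enough to absorb the iterated Schwarz-type estimates, and one must track the denominators coming from the factorials in $\exp_p$ through the derivative calculations. A secondary subtlety is that the discreteness step now requires \emph{simultaneous} control of both the archimedean conjugates of $f_{m_1,\dots,m_{n-1}}(z)$ and of its $p$-adic size, because the product formula is invoked across all places at once. Once these technicalities are handled, the quantitative counting of zeroes against the degree of the auxiliary polynomial is word-for-word identical to the archimedean case.
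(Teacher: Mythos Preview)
The paper does not actually prove Theorem~\ref{t:brumer}; it is merely stated, with the remark that ``the $p$-adic version of Baker's Theorem was proved by Brumer following Baker's method.'' There is therefore no proof in the paper to compare your proposal against.

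That said, your sketch is a faithful outline of exactly the adaptation the paper alludes to: Brumer's argument keeps the Siegel--Vandermonde--bootstrap architecture of \S2 intact and swaps the complex Schwarz-type lemma for a $p$-adic one, with the product formula replacing the archimedean discreteness principle. One pleasant feature you could make explicit is that, since $\exp_p$ is injective on its domain of convergence, the multiplicative relation $\prod \alpha_i^{\lambda_i}=1$ yields $\sum \lambda_i y_i=0$ directly---so the $p$-adic argument delivers the full statement without the extra $2\pi i$ that forces the paper to settle for the weaker Theorem~\ref{t:weakbaker} in the archimedean case. Your identification of the genuine technical points (arranging convergence of $\phi$ on a disk large enough to run the Schwarz estimate, and tracking denominators through the product formula across all places) is accurate; these are precisely where the work lies in Brumer's paper.
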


Similarly there are natural analogs of Schanuel's Conjecture and the Structural Rank Conjecture in the $p$-adic setting.  To be precise we state the latter of these:

\begin{conjecture}[$p$-adic Structural Rank Conjecture] Let \[ M \in M_{m \times n}(\sL_p + \Q) \subset M_{m \times n} (\C_p). \] The rank of $M$ is equal to the structural rank of $M$.
\end{conjecture}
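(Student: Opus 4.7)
The plan is to attack the $p$-adic Structural Rank Conjecture by reducing it, via a $p$-adic transcription of Roy's theorem (Theorem~\ref{t:roy}), to the natural $p$-adic analogue of the special case (\ref{e:scs}) of Schanuel's conjecture: that $\Q$-linearly independent elements $y_1,\dots,y_n \in \sL_p$ satisfy $\trd_\Q \Q(y_1,\dots,y_n) = n$. The conjecture would then follow directly from that transcendence statement.

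First I would adapt the proof of Roy's theorem from $\C$ to $\C_p$. Given $M \in M_{m\times n}(\sL_p + \Q)$, choose a $\Q$-basis $\ell_1,\dots,\ell_r$ of the coefficient span and write $M = \sum_{i=1}^r \ell_i M_i$ with $M_i \in M_{m\times n}(\Q)$. The structural rank is the rank of $M_x = \sum_i x_i M_i$ over $\Q(x_1,\dots,x_r)$, and Roy's argument identifies the drop in rank upon specialization $x_i \mapsto \ell_i$ with the existence of algebraic dependencies among the $\ell_i$ over $\overline{\Q}$, expressed as the vanishing of appropriate minors of $M_x$ viewed as polynomials in the $x_i$. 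This is a purely field-theoretic argument involving transcendence bases and polynomial identities, and it transfers verbatim with $\C$ replaced by $\C_p$, since neither argument uses any analytic or topological property of the ambient field.

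For unconditional progress toward the $p$-adic Schanuel statement above, one has on the transcendence side the Baker--Brumer theorem (Theorem~\ref{t:brumer}), which establishes the ``linear'' part: $\Q$-linearly independent elements of $\sL_p$ are already $\overline{\Q}$-linearly independent. Translated through the Roy equivalence, this yields the conjecture whenever the predicted rank drop would be witnessed by a linear relation among the $\ell_i$ over $\overline{\Q}$. On the rank side, the $p$-adic Waldschmidt--Masser theorem proven in \S\ref{s:wm} provides the sharpest known lower bound on $\rank(M)$, recovering, as noted in the introduction, the conjectural rank up to a factor of roughly two in the Leopoldt setting for totally real fields.

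The main and essentially insurmountable obstacle with present technology is the $p$-adic Schanuel conjecture itself, which is as deeply open as its archimedean counterpart. Even the simplest genuinely nonlinear case -- the $p$-adic Four Exponentials Conjecture for a $2\times 2$ matrix -- is not known. The Gelfond--Baker auxiliary polynomial method developed in \S2, while refined extensively since the 1960s, is fundamentally suited to producing \emph{linear} relations among logarithms rather than full algebraic relations, so any proof of the $p$-adic Structural Rank Conjecture in its entirety will very likely require qualitatively new ideas in $p$-adic transcendence theory that go beyond the methods presented here.
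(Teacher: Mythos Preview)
The statement is a \emph{conjecture}, not a theorem; the paper does not prove it and presents it as open. Your proposal is therefore not a proof but a (correct) summary of the natural reduction: transcribing Roy's equivalence (Theorem~\ref{t:roy2}) to the $p$-adic setting reduces the $p$-adic Structural Rank Conjecture to the $p$-adic analogue of the special case (\ref{e:scs}) of Schanuel's conjecture, which remains wide open. The paper says exactly this in the sentence following Theorem~\ref{t:roy2}.

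Your assessment of the obstacles is accurate and matches the paper's own discussion: Baker--Brumer (Theorem~\ref{t:brumer}) handles only linear relations, the $p$-adic Waldschmidt--Masser theorem (Theorem~\ref{t:wm2}) gives only a partial lower bound on the rank, and already the $2\times 2$ case is the open Four Exponentials Conjecture. In short, there is no proof in the paper to compare your proposal against, and you have correctly identified both the standard reduction and why it does not yield a proof with current methods.
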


\subsection{Applications in Number Theory}

Statements in transcendence theory have important applications in algebraic number theory.  In this section, we describe two important conjectures in Iwawasa theory that are special cases of the $p$-adic Structural Rank Conjecture.
These conjectures are our personal motivation for this study.

\subsubsection{Leopoldt's Conjecture}
Fix a prime $p$ and an embedding $\overline{\Q} \hookrightarrow \C_p$. 

\begin{conjecture}[Leopoldt's Conjecture]
  Let $F$ be a number field of degree $n$ over $\Q$ and let $\sigma_1, \dots, \sigma_n$ denote the embeddings $F \hookrightarrow \overline{\Q}$.  Let $u_1, \dots, u_r$ be a $\Z$-basis for $\cO_F^*/\mu(F)$.  Let 
\[ M = (\log_p\sigma_j(u_i)) \in M_{r \times n}(\sL_p). \]
Then $\rank_{\C_p}(M) = r$.
\end{conjecture}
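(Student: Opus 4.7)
The conjecture is a special case of the $p$-adic Structural Rank Conjecture applied to the Leopoldt matrix $M$. I would organize the argument into two pieces: (i) compute the structural rank of $M$ and verify it equals $r$, and (ii) pass from the structural (``generic'') rank to the actual rank of $M$ over $\C_p$.

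For step (i), note first that $n - r = r_2 + 1 \ge 1$, so $\rank M \le r$ trivially. The \emph{norm relations} $\sum_j \log_p \sigma_j(u_i) = 0$, coming from $\prod_j \sigma_j(u_i) = N_{F/\Q}(u_i) = \pm 1$, give $r$ obvious $\Q$-linear dependencies among the $rn$ entries of $M$. Since $\log_p$ is a group homomorphism, any $\Q$-linear relation among the entries corresponds, after clearing denominators, to a multiplicative relation $\prod_{i,j} \sigma_j(u_i)^{a_{ij}} \in \ker(\log_p) = \mu \cdot p^{\Q}$; since the $u_i$ are global units no nontrivial $p$-part can appear, so the product lies in $\mu$. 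A short argument using the independence of the $u_i$ modulo $\mu(F)$ together with Galois equivariance then shows every such multiplicative relation is a $\Z$-linear combination of the norm relations, pinning the structural rank of $M$ at exactly $r$.

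For step (ii), I must promote the $\Q$-linear independence implicit in (i) to $\C_p$-linear independence. The $p$-adic Baker--Brumer theorem (Theorem~\ref{t:brumer}) takes us halfway, to $\overline{\Q}$-linear independence, but the jump from $\overline{\Q}$ to the enormously larger field $\C_p$ is \emph{the main obstacle}: it is precisely the content of the $p$-adic Structural Rank Conjecture, for which no general technique is known, and this is why Leopoldt's conjecture has remained open for more than fifty years. The best unconditional substitute is the $p$-adic Waldschmidt--Masser theorem proved in \S\ref{s:wm}: applied to $M$, its contrapositive together with the structural computation of step (i) rules out any nontrivial block structure and yields $\rank M \ge rn/(r+n)$. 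For $F$ totally real one has $r = n-1$, so this specializes to $\rank M \ge (n-1)n/(2n-1) \sim r/2$, the well-known ``half of Leopoldt.'' A complete proof appears to require genuinely new transcendence input, e.g. progress toward $p$-adic Schanuel.
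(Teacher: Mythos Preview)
Your overall framing is right: Leopoldt's Conjecture is open, the paper only proves the conditional implication that the $p$-adic Structural Rank Conjecture implies it (Proposition~\ref{p:l}), and your step~(ii) correctly identifies where the difficulty lies and what Waldschmidt--Masser buys unconditionally.

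The gap is in step~(i). Your claim that every multiplicative relation among the $\sigma_j(u_i)$ modulo roots of unity is a $\Z$-linear combination of the $r$ norm relations is false in general. When $F/\Q$ is Galois, every $\sigma_j(u_i)$ already lies in $\cO_F^*$, a group of rank $r$; hence the $\Q$-span of the entries $\log_p\sigma_j(u_i)$ has dimension at most $r$, not the $r(n-1)$ your claim would force. Concretely, for $F$ totally real Galois of degree $n\ge 3$, each $\sigma(u_i)$ is itself a unit of $F$ and so equals $\pm u_1^{a_{i1}}\cdots u_r^{a_{ir}}$ for some integers $a_{ik}$, giving relations $\log_p\sigma(u_i)=\sum_k a_{ik}\log_p u_k$ that are not consequences of the norm relations. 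The ``short argument using Galois equivariance'' therefore does not exist, and your computation of the structural rank does not go through as written.

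The paper's proof of Proposition~\ref{p:l} avoids this entirely: it never tries to list the $\Q$-relations. Instead it bounds the structural rank from below by exhibiting a single specialization of $M_x$ compatible with all the relations, namely the archimedean logarithm $\log|\cdot|$. The compatibility is the observation that if $\sum m_i\log_p(c_i)=0$ for global units $c_i$, then $\prod c_i^{m_i}$ is a root of unity, so $\sum m_i\log|c_i|=0$ as well. The specialized matrix is the classical regulator matrix, which has rank $r$ by Dirichlet's unit theorem. This yields structural rank $\ge r$ (hence $=r$) with no analysis of the relation lattice required. You should replace your step~(i) by this archimedean-specialization argument.
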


\begin{prop} \label{p:l}
The $p$-adic structural rank conjecture implies Leopoldt's Conjecture.
\end{prop}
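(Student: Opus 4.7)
The plan is to reduce Leopoldt's conjecture to the nonvanishing of the classical archimedean regulator. Since $M$ has $r$ rows, its structural rank is trivially at most $r$, so by the $p$-adic Structural Rank Conjecture it suffices to prove the matching lower bound. I will do this by exhibiting a $\Q$-linear specialization of the indeterminates parametrizing the entries of $M$ under which $M_x$ becomes the classical archimedean logarithmic embedding matrix, whose rank is known to be $r$ by Dirichlet's unit theorem.

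Concretely, choose a $\Q$-basis $\ell_1, \dotsc, \ell_k$ of the $\Q$-span of the entries of $M$ drawn from the generating family itself, say $\ell_a = \log_p \sigma_{j_a}(u_{i_a})$ for some indices $(i_a, j_a)$. Writing $\log_p \sigma_j(u_i) = \sum_a c_{ij,a}\ell_a$ with $c_{ij,a} \in \Q$, the matrix $M_x$ has $(i,j)$-entry $\sum_a c_{ij,a}x_a$. The proposed specialization, obtained after fixing an embedding $\overline{\Q}\hookrightarrow \C$, is $x_a \mapsto \log|\sigma_{j_a}(u_{i_a})|$. The content of the argument is to verify that under this substitution the $(i,j)$-entry $\sum_a c_{ij,a}\log|\sigma_{j_a}(u_{i_a})|$ equals $\log|\sigma_j(u_i)|$, so that the specialized matrix is exactly $M' = (\log|\sigma_j(u_i)|)_{ij}$.

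The verification uses only that $\log_p$ is a homomorphism with the kernel $p^\Q \cdot \mu$ recalled in the paper. Clearing a common denominator $N$ in the defining $p$-adic identity, the algebraic number
\[ \beta := \sigma_j(u_i)^N \prod_a \sigma_{j_a}(u_{i_a})^{-Nc_{ij,a}} \]
lies in $\ker(\log_p)$. Since $\beta$ is an algebraic unit (a product of algebraic units), it has trivial $p$-adic valuation, so $\beta$ must in fact be a root of unity $\zeta$. Taking complex absolute values in the identity $\sigma_j(u_i)^N = \zeta \prod_a \sigma_{j_a}(u_{i_a})^{Nc_{ij,a}}$ and using $|\zeta|=1$ yields the desired archimedean identity. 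Hence the specialization of $M_x$ is $M'$; Dirichlet's unit theorem gives $\rank_\R(M') = r_1+r_2-1 = r$; and since the rank of any specialization is a lower bound on the structural rank, the structural rank of $M$ is at least $r$. The $p$-adic Structural Rank Conjecture now delivers $\rank_{\C_p}(M) = r$, which is Leopoldt.

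The main care in the argument is organizational: one must extract the $\Q$-basis of entries inside the generating family $\{\log_p \sigma_j(u_i)\}$ so that the proposed substitution is manifestly well-defined on the $\Q$-span and produces the Dirichlet matrix without further identifications. No deep transcendence input is required---only the elementary description of $\ker(\log_p)$ enters; notably, the full Baker--Brumer theorem is not used here.
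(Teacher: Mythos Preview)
Your proof is correct and follows essentially the same approach as the paper: both choose a $\Q$-basis of the entries of $M$ from among the $\log_p \sigma_j(u_i)$, specialize the indeterminates to the corresponding archimedean logarithms $\log|\sigma_j(u_i)|$, and use the description of $\ker(\log_p)$ to verify that this specialization reproduces the classical regulator matrix of rank $r$. Your write-up makes the kernel argument (that $\beta$ is an algebraic unit, hence has trivial $p$-adic valuation, hence is a root of unity) more explicit than the paper's, which relegates it to a parenthetical remark after equation~(\ref{e:applylog}).
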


\begin{proof}  The important point here is that the archimedean analog of the statement of Leopoldt's Conjecture is known to be true; this is the classical nonvanishing of the regulator of a number field.  More precisely, if we 
fix an embedding $\overline{\Q} \hookrightarrow \C$ and let 
$N = \left( \log |\sigma_j(u_i)| \right)$, where the absolute value denotes the usual absolute value on $\C$, then we have 
\[ \rank_{\C}(N) = r. \]
This is proved using the fact that $\log | \cdot |$ takes values in the ordered field $\R$ (whereas $\log_p$ does not).  For this reason, the $p$-adic statement lies far deeper than the archimedean one.

The field $\Q(x_1, \dots, x_k)$ appearing in the definition of the structural rank provides a bridge between the $p$-adic and complex settings, with the $p$-adic Structural Rank Conjecture doing most of the heavy lifting.

Let $\{c_1, \dots, c_k\} \subset \{\sigma_j(u_i)\}$ 
such that $\{\log_p(c_i) \}$ is a $\Q$-basis for the $\Q$-vector space spanned by the 
$ \log_p(\sigma_j(u_i)) $.  Write 
\[ M = (\log_p \sigma_j(u_i)) = \sum_{i=1}^{k} M_i \log_p(c_i) \]
with $M_i \in M_{r \times n}(\Q)$.   
The $p$-adic Structural Rank Conjecture implies that
\begin{align}
 \rank_{\C_p} M &= \rank_{\Q(x_1, \dots, x_k)}\left( \sum_{i=1}^k M_i x_i \right) \nonumber \\
& \ge \rank_{\C} \left( \sum_{i=1}^k M_i \log |c_i| \right) \nonumber \\
& = \rank_{\C} \left( \log |\sigma_j(u_i)| \right) \label{e:applylog} \\
& = r. \nonumber
\end{align}
Hence $\rank_{\C_p} M  \ge r$, and so we must have equality. Note that in the equality (\ref{e:applylog}), we are implicitly using the fact that if $u_i \in \overline{\Q}^*$ are $p$-adic units, and $m_i \in \Z$ are integers, then
\[ \sum m_i \log_p(u_i) = 0 \Longrightarrow \prod u_i^{m_i} \text{ is a root of unity } \Longrightarrow \sum m_i \log |u_i| = 0. 
\]
\end{proof}

Let us describe two applications of Leopoldt's conjecture.

\bigskip

{\bf Algebraic (Iwasawa Theory).}  By class field theory, Leopoldt's Conjecture implies that the maximal pro-$p$ abelian extension of $F$ unramified outside $p$ has $\Z_p$-rank equal to $r_2 + 1$, where $2r_2$ is the number of embeddings $F \hookrightarrow \C$ with image not contained in $\R$. 

\bigskip

{\bf Analytic ($p$-adic $L$-functions).}  Let $F$ be a totally real field, so \[ r = \rank \cO_F^* = [F : \Q] - 1. \]  There is a $p$-adic analog of the classical Dedekind zeta function of  $F$ denoted $\zeta_{F,p}$.  A theorem of Colmez \cite{colmez} states that
\[ \lim_{s \rightarrow 1} (s-1) \zeta_{F, p}(s) = (*) R_p(F), \]
where \[ R_p(F) = \det(\log_p(\sigma_j(u_i))_{i,j=1,\dots, r} \]
and $(*)$ denotes a specific nonzero algebraic number that we do not describe precisely here.
This is a $p$-adic ``class number formula."  Therefore $\zeta_{F, p}(s)$ has a pole at $s=1$ if and only if Leopoldt's Conjecture is true.

\subsubsection{The Gross--Kuz'min Conjecture}

There is an analog of Leopoldt's Conjecture due independently to Gross and Kuz'min concerning $p$-adic $L$-functions at $s=0$ rather than $s=1$.  Unlike the case of classical $L$-functions, there is no functional equation for $p$-adic $L$-functions relating the values at 0 and 1.

We refer the reader to Gross's article \cite{gross} for details about the Gross--Kuz'min conjecture beyond what we write below.
  To state the conjecture, 
let $H$ be a CM field and $H^+$ its maximal totally real subfield.  Let
\[ U_p^- = \{ u \in H^* \colon |u|_w = 1 \text{ for all } w \nmid p \}. \]
Here $w$ ranges over all places of $H$ that do not divide $p$, including the archimedean ones.
Then $\rank(U_p^-) = r$, where $r$ is the number of primes of $H^+$ above $p$ that split completely in $H$. 

Let $X_p$ denote the $\C_p$-vector space with basis indexed by the places of $H$ above $p$.  Let $c$ denote the non-trivial element of $\Gal(H/H^+)$, i.e.\ $c$ = complex conjugation.  Let $X_p^-$ denote the largest quotient of $X_p$ on which $c$ acts as $-1$.  Then $X_p^-$ has dimension $r$.  We define two maps
\[ \ell_p, o_p  \colon U_p^- \longrightarrow X_p^-. \]
The coordinate of $o_p(u)$ at the component corresponding to a place $\fP$ of $H$ is $\ord_{\fP}(u)$, and the coordinate of $\ell_p(u)$ is 
$\log_p( \N_{H_{\fP}/\Q_p}(u)).$  We extend $\ell_p$ and $o_p$ to $\C_p$-linear maps \[ U_p^- \otimes \C_p \longrightarrow X_p^-. \]
 It is not hard to show using Dirichlet's Unit Theorem that  $o_p$ is an isomorphism, and we define
 \[ R_p^-(H) = \det(\ell_p \circ o_p^{-1}). \]

\begin{conjecture}[Gross--Kuz'min]
We have $R_p^-(H) \neq 0$.
\end{conjecture}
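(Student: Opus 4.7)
The plan is to follow the template used in the proof of Proposition~\ref{p:l} and attempt to deduce the Gross--Kuz'min Conjecture from the $p$-adic Structural Rank Conjecture.

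First, I would exhibit $R_p^-(H)$ as the determinant of a matrix $M \in M_{r\times r}(\sL_p)$. Fix a $\Z$-basis $u_1,\dotsc,u_r$ of $U_p^-$ modulo torsion together with a basis of $X_p^-$ coming from $r$ places $\fP_1,\dotsc,\fP_r$ of $H$ above $p$. Since $o_p$ is a $\Q$-isomorphism with integer matrix in these bases, the composite $\ell_p \circ o_p^{-1}$ is represented by a matrix whose entries are $\Q$-linear combinations of the quantities $\log_p N_{H_{\fP_j}/\Q_p}(u_i)$, each of which lies in $\sL_p$ since $N_{H_{\fP_j}/\Q_p}(u_i)$ is algebraic. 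Choose $c_1,\dotsc,c_k \in \overline{\Q}^*$ so that $\log_p(c_1),\dotsc,\log_p(c_k)$ is a $\Q$-basis of the span of these entries and write $M = \sum_{i=1}^k \log_p(c_i)\, M_i$ with $M_i \in M_{r\times r}(\Q)$. The $p$-adic SRC then identifies $\rank_{\C_p} M$ with the structural rank $\rank_{\Q(x_1,\dotsc,x_k)} \sum_i M_i x_i$, so it would suffice to exhibit a single specialization $x_i \mapsto y_i$ in some characteristic-zero field for which $\det\bigl(\sum_i M_i y_i\bigr) \neq 0$.

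The main obstacle is the absence of the archimedean shadow that made Proposition~\ref{p:l} work. There one substitutes $x_i \mapsto \log|c_i|$ and invokes the classical nonvanishing of the regulator; but by the very definition of $U_p^-$ every $u_i$ satisfies $|\sigma(u_i)|_\infty = 1$ at every archimedean embedding $\sigma$ of $H$, so each $\log|\sigma(u_i)|$ vanishes and the naive archimedean specialization of $M$ is identically zero. This is precisely what makes Gross--Kuz'min strictly deeper than Leopoldt: the $p$-adic information carried by $U_p^-$ has no direct archimedean counterpart on which to fall back. A proof along these lines would therefore need a substitute bridge --- for instance, an $\ell$-adic specialization at some prime $\ell \neq p$, together with an independent nonvanishing result for the associated $\ell$-adic regulator of $U_p^-$, or a direct Baker--Brumer-style lower bound for linear forms in $p$-adic logarithms of local norms from the $H_{\fP_j}$ --- or an altogether different strategy that sidesteps the Structural Rank framework. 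Constructing such a substitute is, in my view, the real heart of the problem.
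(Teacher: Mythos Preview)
The statement you are addressing is an open \emph{conjecture}; the paper does not prove it. What the paper does claim, in a single parenthetical remark after the statement, is that the $p$-adic Structural Rank Conjecture implies Gross--Kuz'min, by the same template as Proposition~\ref{p:l} ``(one uses $\ord_p$ in place of $\log|\cdot|$)''. Your proposal is really an attempt to flesh out that remark, and you set up the framework correctly; the gap is that you do not find the specialization the paper has in mind.

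Concretely: take as matrix entries $v_{ij}=\mathrm{N}_{H_{\fP_j}/\Q_p}(u_i)\in\overline{\Q}^*$ and $L_{ij}=\log_p v_{ij}$. Choose $c_\ell$ among the $v_{ij}$ with $\{\log_p c_\ell\}$ a $\Q$-basis, and specialize $x_\ell\mapsto \ord_p(c_\ell)\in\Q$. The point you need is the implication
\[
\sum_\ell m_\ell\log_p(c_\ell)=\log_p(v_{ij})\ \Longrightarrow\ \sum_\ell m_\ell\,\ord_p(c_\ell)=\ord_p(v_{ij}),
\]
so that the specialized matrix is $(\ord_p v_{ij})=(f_{\fP_j}\cdot\ord_{\fP_j}(u_i))$, which has rank $r$ since $o_p$ is an isomorphism. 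To see the implication, set $\gamma=v_{ij}\big/\prod_\ell c_\ell^{m_\ell}\in\overline{\Q}^*$; then $\log_p\gamma=0$ forces $\gamma=p^a\zeta$ with $a\in\Q$ and $\zeta$ a root of unity. Here is where the very fact you flagged as an obstruction becomes the key: every $v_{ij}$, being a product of Galois conjugates of elements of $U_p^-$, satisfies $|v_{ij}|_w=1$ at \emph{every} archimedean place $w$, hence so does $\gamma$. But $|p^a\zeta|_\infty=p^a$, so $a=0$, $\gamma$ is a root of unity, and $\ord_p\gamma=0$ as required. In other words, the archimedean triviality of $U_p^-$ does not block the argument---it is precisely what eliminates the $p^{\Q}$-ambiguity in $\ker\log_p$ and makes the $\ord_p$ specialization legitimate. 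Your speculation about $\ell$-adic substitutes or direct Baker--Brumer bounds is unnecessary for this conditional implication.
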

 A  proof similar to the proof of Proposition~\ref{p:l} shows that the $p$-adic Structural Rank Conjecture implies the Gross--Kuz'min conjecture (one uses $\ord_p$ in place of $\log |\cdot |$).  Once again there are algebraic and analytic interpretations of this conjecture.
 
 \bigskip

{\bf Algebraic (Iwasawa Theory).}  By class field theory, the Gross--Kuz'min conjecture implies a bound on the growth of the $p$-parts of class groups of fields in the cyclotomic $\Z_p$-extension of $H$. See \cite{fg}*{Prop. 3.9} for details.

\bigskip

{\bf Analytic ($p$-adic $L$-functions).}  Let $\chi$ denote the nontrivial character of $\Gal(H/H^+)$.
  Then one knows that \[ \ord_{s=0} L_p(\chi\omega, s) \ge r. \]  This follows for odd $p$ by work of Wiles \cite{wiles}; an alternate proof using the Eisenstein cocycle that works for all $p$ was given in \cite{cd} and \cite{spiess} using an argument of Spiess.
  In the papers \cite{ddp} (joint with Darmon and Pollack) and \cite{dkv} (joint with Kakde and Ventullo), we proved that
\[ L_p^{(r)}(\chi\omega,0) = (*) R_p^-(H) \]
where $(*)$ is a specific non-zero rational number.
This is a $p$-adic class number formula at $s=0$.  Therefore, $ L_p(\chi\omega, s)$ has a zero of order exactly $r$ at $s=0$ if and only if the Gross-Kuz'min conjecture is true.

\subsubsection{Representation theoretic considerations}
 
 Retaining the setting of the Gross--Kuz'min Conjecture, suppose now that $H$ contains a totally real field $F$ such that $H/F$ is Galois.  Let $G = \Gal(H/F)$.  For any representation $M$ of $G$ over $\C_p$, and character $\chi$ of an irreducible representation $V$, let $M^\chi$ denote the $\chi$-isotypic component of $M$ (i.e.\ the span of the subrepresentations of $M$ isomorphic to $V$). 
 
Then \[ U_p^- = \bigoplus_{\chi} U_p^\chi, \qquad X_p^- = \bigoplus_{\chi} X_p^\chi, \]
where the sums range over the characters $\chi$ of irreducible representations $V$ of $G$ on which $c$
 acts as $-1$.  The maps $\ell_p$ and $o_p$ also decompose as sums of maps 
 \[ \ell_{p}^\chi, o_p^\chi \colon U_p^\chi \longrightarrow X_p^\chi. \]
We define 
 \[   R_p^\chi(H) = \det(\ell_p^\chi \circ (o_p^\chi)^{-1}).\] We then have
 \begin{equation} \label{e:factor}
R_p^-(H) = \prod_\chi R_p^\chi(H). 
  \end{equation}
 Now, for $\chi$ as above, 
 \[ r_p^\chi := \dim_{\C_p} U_p^\chi = \dim_{\C_p} X_p^\chi = \sum_{\fp \mid p} \dim_{\C_p} V^{G_\fp}, \]
 where the sum ranges over the primes of $F$ above $p$, $G_{\fp} \subset G$ denotes the decomposition group of a prime of $H$ above $\fp$, and $V^{G_\fp}$ denotes the maximal subspace of $V$ invariant under $G_\fp$.
When $r_p^\chi = 1$, the regulator $ R_p^\chi(H)$ is a $\overline{\Q}$-linear combination of $p$-adic logarithms of algebraic numbers.  As pointed out by Gross in \cite{gross}*{Proposition 2.13}, the nonvanishing of $R_p^\chi(H)$ follows from the
theorem of Brumer--Baker (Theorem~\ref{t:brumer}) in this case.
 
 \begin{theorem} \label{t:rankone}
  If $r_p^\chi = 1$, then $R_p^\chi(H) \neq 0$.
 \end{theorem}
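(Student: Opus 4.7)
The plan is to assume $R_p^\chi(H) = 0$ and derive a contradiction from Brumer--Baker (Theorem~\ref{t:brumer}). The hypothesis $r_p^\chi = 1$ forces $V$ to be $1$-dimensional (so $\chi$ is a character) and implies the existence of a unique prime $\fp_0 \mid p$ in $F$ with $\chi|_{G_{\fp_0}} = 1$, while $\chi|_{G_\fp} \ne 1$ at every other $\fp \mid p$.  Both $U_p^\chi$ and $X_p^\chi$ are then $1$-dimensional over $\C_p$.  Since $o_p^\chi$ is an isomorphism, the vanishing of $R_p^\chi(H)$ is equivalent to $\ell_p^\chi \equiv 0$.

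Next I would make $\ell_p^\chi$ concrete. Fix a prime $\fP_0$ of $H$ above $\fp_0$ and choose $u_0 \in U_p^-$ so that $u := \sum_{g \in G} \chi(g^{-1}) \cdot g u_0 \in U_p^- \otimes_\Z \C_p$ is nonzero; such a choice exists because $U_p^\chi \ne 0$. The $G$-equivariance of $\ell_p$ places $\ell_p(u)$ in $X_p^\chi$, so $\ell_p^\chi(u) = 0$ forces $\ell_p(u) = 0$ in $X_p$. Using the identity $N_{H_{\fP_0}/\Q_p}(g u_0) = N_{H_{g^{-1}\fP_0}/\Q_p}(u_0)$ and the invariance $\chi|_{G_{\fP_0}} = 1$, the $\fP_0$-component of this vanishing becomes
\[
\sum_{\fP \mid \fp_0} \chi(g_\fP^{-1})\, \log_p \gamma_\fP \;=\; 0, \qquad \gamma_\fP := N_{H_\fP/\Q_p}(u_0) \in \overline{\Q}^{*},
\]
where $g_\fP \in G$ is any element with $g_\fP \fP_0 = \fP$ (well-defined coefficient since $\chi|_{G_{\fP_0}} = 1$). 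Each coefficient $\chi(g_\fP^{-1})$ is a nonzero root of unity, so this is a nontrivial $\overline{\Q}$-linear relation among elements of $\sL_p$.

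I would then invoke Brumer--Baker: a nontrivial $\overline{\Q}$-linear dependence among $\log_p$ of algebraic numbers forces a nontrivial $\Q$-linear dependence, so there exist integers $m_\fP$, not all zero, with $\prod_\fP \gamma_\fP^{m_\fP} \in \mu \cdot p^\Q$, the kernel of $\log_p$ on $\C_p^{*}$.  This is the sole input from transcendence theory.

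The main obstacle is to convert this multiplicative triviality into a contradiction with $u \ne 0$. I would exploit that $\ell_p(u)_\fP$ vanishes not only at $\fP_0$ but automatically at every $\fP \mid p$: for primes above $\fp \ne \fp_0$ this holds because $u \in U_p^\chi$ while $X_p^\chi$ is supported on the $G$-orbit of $\fP_0$. Applying Brumer--Baker at each $\fP$ shows that $u$ lies in $\widetilde K \otimes_\Z \overline{\Q}$, where $\widetilde K := \{v \in U_p^- : N_{H_\fP/\Q_p}(v) \in \mu \cdot p^\Q \text{ for all } \fP \mid p\}$ is a $G$-stable $\Z$-submodule of $U_p^-$. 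It remains to rule out any nonzero $\chi$-isotypic element of $\widetilde K \otimes \overline{\Q}$: here one uses the archimedean constraints $|u_0|_v = 1$, equivalently $N_{H/\Q}(u_0) = \pm 1$, together with the freedom to replace $u_0$ by any representative whose class in $U_p^\chi$ is nonzero, to show that the multiplicative relation forced by Brumer--Baker is incompatible with $u$ being a genuine $\chi$-eigenvector in $U_p^- \otimes \C_p$. Completing this descent is the delicate final step.
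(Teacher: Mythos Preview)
Your overall strategy matches the paper's: when $r_p^\chi = 1$ the regulator is a single $\overline{\Q}$-linear form in elements of $\sL_p$, and one invokes Brumer--Baker (Theorem~\ref{t:brumer}); the paper does not spell out the argument but simply refers to Gross~\cite{gross}*{Proposition 2.13}.

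There are, however, two real problems with your write-up. First, the claim that $r_p^\chi = 1$ forces $\dim V = 1$ is false. The condition $\sum_{\fp\mid p}\dim V^{G_\fp}=1$ only says that a unique $\fp_0$ has $\dim V^{G_{\fp_0}}=1$; the representation $V$ itself may have larger dimension (take $G=S_3$, $V$ the standard $2$-dimensional irreducible, $G_{\fp_0}$ generated by a transposition). Your projector $\sum_g\chi(g^{-1})g$, the phrase ``$\chi|_{G_{\fp_0}}=1$'', and the scalar coefficients $\chi(g_\fP^{-1})$ all presuppose that $\chi$ is a linear character, so as written the argument only treats abelian $G$.

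Second, the final step you flag as ``delicate'' is in fact a genuine gap. Your module $\widetilde K$ is precisely $\ker(\ell_p|_{U_p^-})$, so showing that its $\chi$-isotypic part vanishes is essentially the injectivity of $\ell_p^\chi$, which is what you set out to prove; the archimedean conditions $|u_0|_v=1$ are already built into the definition of $U_p^-$ and add nothing new. What is missing is any use of the map $o_p$. The nonvanishing input available to you is $o_p^\chi(u)\ne 0$, an explicit nonzero $\overline{\Q}$-linear combination of the \emph{rational} numbers $\ord_\fP(g u_0)$; it is against this that the Brumer--Baker conclusion must be played off, in the same spirit that the archimedean regulator is used in the proof of Proposition~\ref{p:l}. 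Without that comparison the argument does not close.
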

 
 There is a particular case when {\em every} $r_p^\chi \le 1$.  If $F$ contains only one prime above $p$ (for example $F = \Q$), and $G$ is abelian (so every $V$ has dimension 1), then clearly $r_p^\chi \le 1$.  Combining Theorem~\ref{t:rankone} with the factorization (\ref{e:factor}), we obtain:
 
 \begin{corollary}  Let $F$ be a totally real field with exactly one prime above $p$, and let $H$ be a CM abelian extension of $F$.  Then the Gross--Kuz'min conjecture holds for $H$.
 \end{corollary}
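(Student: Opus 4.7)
The plan is to reduce to Theorem~\ref{t:rankone} via the factorization (\ref{e:factor}), by showing that the stated hypotheses force every $r_p^\chi$ to be at most $1$, so that each factor $R_p^\chi(H)$ is nonzero and hence so is the product $R_p^-(H)$.

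First I would observe that, because $H/F$ is abelian, $G = \Gal(H/F)$ is abelian, so every irreducible $\C_p$-representation $V$ of $G$ is one-dimensional; in particular $\dim_{\C_p} V^{G_\fp} \le \dim_{\C_p} V = 1$ for each prime $\fp$ of $F$. Combined with the assumption that $p$ has a unique prime $\fp_0$ in $F$, the defining sum for $r_p^\chi$ collapses to a single term, giving
\[ r_p^\chi \;=\; \dim_{\C_p} V^{G_{\fp_0}} \;\in\; \{0, 1\} \]
for every character $\chi$ of $G$ with $\chi(c) = -1$.

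In the case $r_p^\chi = 0$ the maps $\ell_p^\chi$ and $o_p^\chi$ are maps between zero-dimensional $\C_p$-vector spaces, so by the usual convention $R_p^\chi(H) = \det(\emptyset) = 1 \neq 0$. In the case $r_p^\chi = 1$, the regulator $R_p^\chi(H)$ is, up to a nonzero algebraic scalar, a single $p$-adic logarithm of an algebraic number, and Theorem~\ref{t:rankone} (which in turn rests on the Brumer--Baker Theorem~\ref{t:brumer}) gives $R_p^\chi(H) \neq 0$. Multiplying over all $\chi$ using the factorization (\ref{e:factor}) yields $R_p^-(H) \neq 0$, which is precisely the Gross--Kuz'min conjecture for $H$.

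I do not expect any real obstacle here: the corollary is essentially a bookkeeping consequence of Theorem~\ref{t:rankone}. The only content to verify is that abelianness of $G$ together with the single-prime hypothesis on $p$ in $F$ forces $r_p^\chi \le 1$ for every relevant $\chi$, and all the transcendence-theoretic input has already been packaged into Theorem~\ref{t:rankone} via Brumer--Baker.
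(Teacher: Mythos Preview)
Your proposal is correct and follows essentially the same argument as the paper: use abelianness of $G$ and the single-prime hypothesis to force $r_p^\chi \le 1$, then invoke Theorem~\ref{t:rankone} together with the factorization~(\ref{e:factor}). One minor imprecision: in the case $r_p^\chi = 1$ the regulator $R_p^\chi(H)$ is a $\overline{\Q}$-linear combination of $p$-adic logarithms rather than a single logarithm, but this does not affect the argument since you correctly defer to Theorem~\ref{t:rankone} for the nonvanishing.
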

 
 A similar analysis holds for Leopoldt's conjecture, and we obtain:
 
  \begin{theorem} Leopoldt's conjecture holds for abelian extensions of $\Q$.
 \end{theorem}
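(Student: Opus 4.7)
The plan is to mirror the proof of the preceding Corollary for Gross--Kuz'min, replacing $p$-units with the full unit group. Let $H/\Q$ be a finite abelian extension with Galois group $G = \Gal(H/\Q)$; since $G$ is abelian, every irreducible $\C_p$-representation of $G$ is a one-dimensional character $\chi$. The Leopoldt map
\[ \ell_p \colon \cO_H^* \otimes_\Z \C_p \longrightarrow H \otimes_\Q \C_p \;\cong\; \bigoplus_{\fp \mid p} H_\fp \otimes_{\Q_p} \C_p, \qquad u \mapsto (\log_p u)_\fp, \]
is $G$-equivariant, so it decomposes as $\bigoplus_\chi \ell_p^\chi$, and Leopoldt's conjecture is the assertion that each $\ell_p^\chi$ is injective. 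A dimension count using Dirichlet's unit theorem together with Kronecker--Weber (so that $H$ is either totally real or CM) gives $\dim_{\C_p}(\cO_H^* \otimes \C_p)^\chi \in \{0,1\}$ for every $\chi$, while the normal basis theorem identifies $H \otimes_\Q \C_p$ with the regular representation $\C_p[G]$, so every $\chi$-isotypic on the target is one-dimensional. Thus $\ell_p^\chi$ is a linear map between $\C_p$-spaces of dimensions at most $1$, and its injectivity reduces, in the nontrivial case, to the non-vanishing of a single scalar $L_\chi \in \C_p$.

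To compute $L_\chi$, choose $u_0 \in \cO_H^*$ whose $\chi$-projection $u_0^\chi := \sum_{g \in G} \bar\chi(g)\, g(u_0)$ is nonzero in $\cO_H^*/\mu(H) \otimes_\Z \overline\Q$, and fix an embedding $\sigma_1\colon H \hookrightarrow \C_p$. Tracking $\ell_p(u_0^\chi)$ through the identification with $\C_p[G]$, one finds (up to a nonzero scalar depending on the choices)
\[ L_\chi \;=\; \sum_{g \in G} \bar\chi(g)\, \log_p \sigma_1(g(u_0)) \;\in\; \C_p, \]
a $\overline\Q$-linear combination of elements of $\sL_p$. Setting $y_g = \log_p \sigma_1(g(u_0))$ and identifying $\Q^{|G|}$ with $\Q[G]$ via $(c_g) \leftrightarrow \sum_g c_g g$, a direct computation using $\log_p p = 0$ and the fact that each $g(u_0)$ is a unit shows that the $\Q$-annihilator $\{\sum_g c_g g \in \Q[G] : \sum_g c_g y_g = 0\}$ coincides with the annihilator ideal $I_{u_0} \subset \Q[G]$ of $u_0$ in the $\Q[G]$-module $\cO_H^*/\mu(H)\otimes_\Z \Q$. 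The Brumer--Baker theorem (Theorem~\ref{t:brumer}) yields the equality $\dim_\Q\Q\text{-span}(y_g) = \dim_{\overline\Q}\overline\Q\text{-span}(y_g)$, and hence promotes this to an equality of $\overline\Q$-annihilators
\[ \{\textstyle\sum_g c_g g \in \overline\Q[G] \colon \sum_g c_g y_g = 0\} \;=\; \overline\Q \otimes_\Q I_{u_0}. \]
Decomposing $\overline\Q[G] = \prod_\psi \overline\Q\, e_\psi$ into character idempotents, the element $\sum_g \bar\chi(g) g = |G|\, e_\chi$ lies in $\overline\Q \otimes_\Q I_{u_0}$ if and only if $e_\chi \cdot u_0 = 0$ in $\cO_H^*/\mu(H) \otimes_\Z \overline\Q$, i.e., if and only if $u_0^\chi = 0$. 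Since $u_0^\chi \neq 0$ by construction, $L_\chi \neq 0$, and the proof is complete.

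The main obstacle in this argument is precisely the Brumer--Baker upgrade from the ``easy'' $\Q$-annihilator identification (which is essentially unit theory) to the full $\overline\Q$-annihilator; once that is in hand, the rest is bookkeeping with dimension counts and the character decomposition of $\overline\Q[G]$. Like the Gross--Kuz'min analogue, this reasoning essentially requires both the abelian hypothesis on $G$ (so $\overline\Q[G]$ splits into copies of $\overline\Q$ and each target $\chi$-isotypic is one-dimensional) and the hypothesis that the base is $\Q$ (via Kronecker--Weber, so that each $(\cO_H^* \otimes \C_p)^\chi$ has dimension at most one); relaxing either hypothesis breaks the reduction to a rank-one application of Brumer--Baker.
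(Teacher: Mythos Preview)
Your proof is correct and follows essentially the same approach as the paper, which merely states that ``a similar analysis holds for Leopoldt's conjecture'' in reference to the preceding Gross--Kuz'min argument: decompose by characters of the abelian group $G$, observe that each $\chi$-isotypic piece of $\cO_H^*\otimes\C_p$ has dimension at most $1$, and apply Brumer--Baker (Theorem~\ref{t:brumer}) to the resulting rank-one regulators. You have supplied the details the paper omits, including the explicit form of $L_\chi$ and the identification of the $\Q$-annihilator with $I_{u_0}$, which is exactly what underlies the paper's Theorem~\ref{t:rankone}.
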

  
 \subsection{A theorem of Roy}

 Damien Roy has proven a number of beautiful results in transcendence theory.  We prove one of these now.
 
 \begin{theorem}[Roy] \label{t:roy2}
  The Structural Rank Conjecture is equivalent to the special case of Schanuel's Conjecture that states that if $y_1, \dots, y_n \in \sL$ are $\Q$-linearly 
independent, then
\[ \trd_\Q \Q(y_1, \dots, y_n) = n. \]
 \end{theorem}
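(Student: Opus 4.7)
The plan is to prove both implications by reducing any gap between the rank and the structural rank of a matrix over $\sL + \Q$ to a polynomial relation among $\Q$-linearly independent elements of $\sL$. A preliminary observation is that $\sL \cap \Q = \{0\}$, which follows from Lindemann--Weierstrass: a nonzero rational $r \in \sL$ would force $e^r \in \overline{\Q}$.

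For the direction Schanuel $\Rightarrow$ Structural Rank Conjecture, let $M \in M_{m \times n}(\sL + \Q)$ and choose a $\Q$-basis for the span of the entries of $M$ of the form $\ell_1, \dots, \ell_s \in \sL$, augmented if necessary by a single element outside $\sL$ (which may be taken to be $1$ when $1$ lies in the span). A short manipulation, rescaling the variable attached to the non-$\sL$ basis element, shows that the structural rank of $M$ equals the rank of the affine generic matrix $N_0 + \sum_{i=1}^{s} x_i N_i$ over $\Q(x_1, \dots, x_s)$. The rank of $M$ is always at most its structural rank. If the inequality were strict, then some $d \times d$ minor of this affine matrix would be a nonzero polynomial $q \in \Q[x_1, \dots, x_s]$ vanishing at $(\ell_1, \dots, \ell_s)$, yielding an algebraic relation among $\Q$-linearly independent elements of $\sL$ in contradiction with Schanuel's special case.

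For the converse direction, suppose $y_1, \dots, y_n \in \sL$ are $\Q$-linearly independent and that $P(y_1, \dots, y_n) = 0$ for some nonzero $P \in \Q[x_1, \dots, x_n]$. The key algebraic input is the classical \emph{determinantal representation} of polynomials, going back to Cayley and developed systematically by Valiant: for any nonzero $P$ there exist matrices $A_0, A_1, \dots, A_n \in M_N(\Q)$ with $\det(A_0 + \sum_{i=1}^{n} x_i A_i) = P$. This is proved by induction on the size of an arithmetic formula for $P$, combining determinantal representations of subformulas via standard block-matrix gadgets for addition and multiplication. Specializing, set $M := A_0 + \sum_{i=1}^{n} y_i A_i \in M_N(\sL + \Q)$. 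Then $\det M = P(y_1, \dots, y_n) = 0$ forces the rank of $M$ to be less than $N$, while the non-vanishing of $\det(A_0 + \sum x_i A_i) = P$ as a polynomial forces the structural rank of $M$ to equal $N$. This contradicts the Structural Rank Conjecture, so no such $P$ exists.

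The main obstacle is the determinantal representation invoked in the second direction. Although its existence is classical and elementary in spirit, the recursive combinatorial construction takes some care to write out cleanly. The rest of the argument is a routine unwinding of the definitions and a comparison of minors of $M$ with minors of its structural lift $M_x$.
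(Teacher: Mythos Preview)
Your approach matches the paper's: the Schanuel $\Rightarrow$ SRC direction compares minors of $M$ with minors of $M_x$ and invokes algebraic independence, and the SRC $\Rightarrow$ Schanuel direction uses a determinantal representation of the offending polynomial (the paper proves this directly as Lemma~\ref{l:roy} via the factorization $N = AB$ and a block-matrix identity, while you cite Valiant; the paper in fact notes this attribution).

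There is one small gap in your first direction. When the extra basis element $\ell_0 \notin \sL$ cannot be taken equal to $1$---that is, when $1$ does \emph{not} lie in the $\Q$-span of the entries of $M$---your claim that the affine minor $q$ vanishes at $(\ell_1,\dots,\ell_s)$ is not correct: the corresponding minor of the actual matrix $M = \ell_0 N_0 + \sum \ell_i N_i$ is the \emph{homogeneous} minor evaluated at $(\ell_0,\ell_1,\dots,\ell_s)$, not $q(\ell_1,\dots,\ell_s)$, and the $\ell_i/\ell_0$ are not in $\sL$. The paper treats this case separately (its Case~1): after rescaling one may write $\ell_0 = 1 + c_1$ with $c_1 \in \sL$, observe that $c_1$ is $\Q$-independent of $\ell_1,\dots,\ell_s$ (otherwise $1$ would lie in the span), and then apply Schanuel to $c_1,\ell_1,\dots,\ell_s$ to conclude that $\ell_0,\ell_1,\dots,\ell_s$ are algebraically independent over $\Q$. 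This patches the argument immediately, but your ``short manipulation, rescaling the variable'' does not cover it as written.
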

 
 Similarly, the $p$-adic Structural Rank Conjecture is equivalent to the $p$-adic version of the special case of Schanuel's conjecture, but we will content ourselves with the archimedean setting here.  Theorem~\ref{t:roy2} is proven in \cite{roy}.

\bigskip
One direction of Roy's Theorem is relatively elementary.

\begin{lemma}  The special case of Schanuel's Conjecture implies the Structural Rank Conjecture.
\end{lemma}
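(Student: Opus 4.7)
The plan is to show that Schanuel's special case implies the Structural Rank Conjecture by producing, for any matrix $M \in M_{m\times n}(\sL + \Q)$ of structural rank $s$, a nonzero $s \times s$ minor of $M$. The opposite inequality $\rank(M) \le $ structural rank$(M)$ is immediate: any nonzero $s \times s$ minor of $M$ is the specialization at $x_i = \ell_i$ of the corresponding minor of $M_x$, so that polynomial minor is nonzero, giving structural rank at least $s$.

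For the nontrivial direction, I would first choose the basis $\ell_1,\dots,\ell_r$ with care. One can always enlarge the $\Q$-span of coefficients to contain $1$ without affecting the structural rank (adjoining an indeterminate that appears with coefficient $0$ in every entry does not change the rank of $M_x$). After this enlargement, one can pick a basis of the form $\ell_1 = 1,\ \ell_2,\dots,\ell_r$ with $\ell_2,\dots,\ell_r \in \sL$; this is possible because $\sL \cap \Q = \{0\}$, a consequence of Lindemann's theorem. Schanuel's special case applied to the $\Q$-linearly independent elements $\ell_2,\dots,\ell_r \in \sL$ then yields $\trd_\Q \Q(\ell_2,\dots,\ell_r) = r-1$, i.e.\ algebraic independence of $\ell_2,\dots,\ell_r$ over $\Q$.

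Now fix a nonzero $s \times s$ minor $P(x_1,\dots,x_r) \in \Q[x_1,\dots,x_r]$ of $M_x$. The central observation is that $P$ is homogeneous of degree $s$: each entry of $M_x$ is a $\Q$-linear form in the $x_i$, so any determinant of an $s \times s$ submatrix is a sum of products of $s$ such linear forms. Homogeneity forces $P(1,x_2,\dots,x_r)$ to be a nonzero polynomial in $x_2,\dots,x_r$, for otherwise $P(a, ax_2,\dots,ax_r) = a^s P(1,x_2,\dots,x_r) = 0$ would show that $P$ vanishes on the open set $\{x_1 \neq 0\}$, forcing $P = 0$. Substituting $x_j = \ell_j$ for $j = 2,\dots,r$ and invoking the algebraic independence of $\ell_2,\dots,\ell_r$ over $\Q$, we conclude $P(1,\ell_2,\dots,\ell_r) \neq 0$, which is precisely the value of the chosen minor of $M$. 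Thus $\rank(M) \geq s$, completing the proof.

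The only delicate point---and the reason the argument is not a one-liner---is that $1 \in \Q$ typically lies in the $\Q$-span of the coefficients of $M$, so one cannot hope for the \emph{entire} basis $\ell_1,\dots,\ell_r$ to be algebraically independent over $\Q$. The homogeneity of minors is what bypasses this obstruction, by letting us "normalize out" the rational coordinate $x_1$ and reduce to the genuinely transcendental elements $\ell_2,\dots,\ell_r$ where Schanuel's special case applies directly.
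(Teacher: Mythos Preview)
Your proof is correct and follows essentially the same approach as the paper's: select a nonzero $s\times s$ minor of $M_x$, observe it is homogeneous, and invoke the algebraic independence of a $\Q$-basis of logarithms via Schanuel's special case. The only organizational difference is that you unify all cases by first enlarging the coefficient span to contain $1$ and then choosing a basis $\{1,\ell_2,\dots,\ell_r\}$ with $\ell_i\in\sL$, whereas the paper treats separately the cases where the coefficients lie in $\sL$, where $1$ is not in their span, and where $1$ is in their span.
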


\begin{proof}
We assume the special case of Schanuel's conjecture.  We first consider a matrix $M$ with coefficients in $\sL$.
 Let $M = \sum M_i c_i$ with $M_i \in M_{m \times n}(\Q)$ and $c_i \in \sL$ linearly independent over $\Q$.  
 Write \[ M_x = \sum M_i x_i \in M_{m \times n}(\Q(x_1, \dots, x_n)) \] and let $r=\rank(M_x)$.  Let $J_x$ be an $r \times r$ submatrix of $M_x$ such that \[ \det(J_x) = P(x_1, \dots, x_n) \neq 0 \]
 in $\Q[x_1, \dots, x_n].$  The determinant of the corresponding submatrix of $M$ is equal to $P(c_1, \dots, c_n)$ and hence cannot vanish since the $c_i$ are algebraically independent, by the special case of Schanuel's conjecture.  Therefore $\rank(M) \ge r$.  Of course it is clear that $\rank(M) \le r$, so we get equality.
 \bigskip

Now assume $M$ has coefficients in $\sL + \Q$, but not in $\sL$.  There are 2 cases. 
\bigskip

{\bf Case 1: }  $1$ is not in the $\Q$-linear span of the coefficients of $M$.  The $\Q$-basis for this span can be taken to have the form $1+c_1, c_2, \dots, c_n$, where $c_i \in \sL$.  It is easy to check that the $c_i$ must be $\Q$-linearly independent, and hence by the special case of Schanuel's conjecture, they are algebraically independent.  The same is therefore true of $1+c_1, c_2, \dots, c_n$.  The previous proof then applies to this basis.

\bigskip

{\bf Case 2: }  $1$ is in the $\Q$-linear span of the coefficients of $M$.  We may take a $\Q$-basis of this span of the form $c_0 = 1, c_1, \dots, c_n$, where $c_i \in \sL$ for $i \ge 1$.  
We proceed as before. Write 
\[ M = \sum_{i=0}^n M_i c_i, \quad M_x = \sum_{i=0}^n M_i x_i. \]
Let $r = \rank(M_x)$ and $J_x$ an $r \times r$ submatrix of $M_x$ with \[ \det(J_x) = P(x_0, \dots, x_n) \neq 0. \]
The determinant of the corresponding submatrix $J$ of $M$ is $P(1, c_1, \dotsc, c_n)$. 
Since $P$ is a nonzero homogeneous polynomial of degree $r$, its specialization $P(1, x_1, \dots, x_n)$ is also nonzero, so 
$\det(J) = P(1, c_1, \dotsc, c_n) \neq 0$ by the algebraic independence of the $c_i$.  Therefore
 $\rank(M) \ge r$ as desired.
\end{proof}

  The main content of the converse is in the following lemma.

\begin{lemma} \label{l:roy}  Let $k$ be a commutative ring and let $P \in k[x_1, \dots, x_n]$. There exists a square matrix $N$ with coefficients in \[ k + k x_1 + \dots + k x_n \] such that $\det(N) = P$.
\end{lemma}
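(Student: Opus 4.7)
The plan is to show that the family $\mathcal{D} \subset k[x_1,\dots,x_n]$ of polynomials expressible as $\det N$ for some square matrix $N$ with entries in $k + kx_1 + \cdots + kx_n$ exhausts $k[x_1,\dots,x_n]$. Every monomial $c \cdot x_1^{a_1}\cdots x_n^{a_n}$ lies in $\mathcal{D}$: a constant is realized by the $1\times 1$ matrix $(c)$, and a monomial of positive total degree is realized by a diagonal matrix whose diagonal consists of $a_i$ copies of each $x_i$, with the constant $c$ absorbed into one diagonal entry. Since every polynomial is a $k$-linear combination of monomials, it therefore suffices to prove that $\mathcal{D}$ is closed under addition.

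Closure under multiplication in $\mathcal{D}$ is immediate via the block-diagonal construction
\[ \det M_1 \cdot \det M_2 = \det\begin{pmatrix} M_1 & 0 \\ 0 & M_2 \end{pmatrix}, \]
so the main task is additive closure. Given $M_1,M_2$ of sizes $m,n$ with linear entries and $\det M_i = P_i$, I would construct $N$ with linear entries and $\det N = P_1 + P_2$ by lifting the scalar identity
\[ P_1 + P_2 = \det\begin{pmatrix} 1 & -1 \\ P_1 & P_2 \end{pmatrix} \]
to a block setting. A natural candidate is the bordered block matrix
\[ N = \begin{pmatrix} M_1 & 0 & u_1 \\ 0 & M_2 & u_2 \\ v_1^T & v_2^T & 0 \end{pmatrix}, \]
with vectors $u_i,v_i$ having entries in $k$ chosen so that the Laplace expansion along the last row and column yields exactly $P_1 + P_2$.

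The principal obstacle will be designing the bordering $u_i,v_i$ so that unwanted cross-terms—such as $P_1 P_2$ or products of sub-minors of both $M_1$ and $M_2$—cancel from the expansion. A direct Schur-complement calculation gives $\det N$ as a $k$-linear combination of $\det M_i$ and various principal/nonprincipal minors of each $M_j$, and one must balance these contributions to leave exactly the desired two terms. The cleanest route is likely a combinatorial argument via the cycle-cover interpretation of the determinant: one designs the digraph of $N$ so that every nonzero cycle cover is forced to lie entirely within the $M_1$-block (contributing $P_1$) or entirely within the $M_2$-block (contributing $P_2$), with the bordering entries serving as a mandatory ``switch'' that rules out mixed cycle covers. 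Once this additive closure is verified, iterating over the monomial expansion $P = \sum_\alpha c_\alpha x^\alpha$ completes the proof of the lemma.
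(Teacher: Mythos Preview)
Your overall strategy---show that the set $\mathcal{D}$ of determinants of matrices with affine-linear entries contains all monomials and is closed under multiplication and addition---is sound in principle, and is a well-known alternative route (via algebraic branching programs) to the result. However, the crucial additive-closure step is not established, and the specific construction you propose cannot work.

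Take $M_1 = \begin{pmatrix} x_1 & x_2 \\ x_3 & x_4 \end{pmatrix}$ and $M_2 = (x_5)$, so $P_1 = x_1 x_4 - x_2 x_3$ and $P_2 = x_5$. Your bordered matrix is
\[
N = \begin{pmatrix} x_1 & x_2 & 0 & p \\ x_3 & x_4 & 0 & q \\ 0 & 0 & x_5 & r \\ s & t & u & 0 \end{pmatrix}, \qquad p,q,r,s,t,u \in k.
\]
Expanding along the last column gives
\[
\det N \;=\; -ru\,(x_1x_4 - x_2x_3) \;+\; x_5\bigl(-psx_4 + ptx_3 + qsx_2 - qtx_1\bigr).
\]
For $\det N = P_1 + P_2$ you would need the bracketed factor of $x_5$ to equal the constant $1$; but it is a $k$-linear form in $x_1,\dots,x_4$ with zero constant term, so this is impossible. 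A single bordering row and column is simply not enough degrees of freedom once $M_1$ has size $\ge 2$, and your cycle-cover heuristic does not rescue this: with the block shape you wrote down, any cycle cover touching the extra vertex is forced to pick up an entry of $M_1$ or $M_2$ in addition to the constants, so you cannot isolate a pure ``$+P_2$'' term. Additive closure \emph{can} be proven, but it needs a more elaborate construction (essentially gluing two branching programs at both source and sink, which costs more than one extra row/column), and you have not supplied one.

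By contrast, the paper avoids additive closure entirely. It argues by induction on the degree $d$ of the entries: any $m\times m$ matrix $N$ with entries in $P_d$ factors as $N = AB$ with $A \in M_{m\times s}(P_{d-1})$ and $B \in M_{s\times m}(P_1)$, and then the identity
\[
\det(AB) \;=\; \det\begin{pmatrix} I_s & B \\ -A & 0 \end{pmatrix}
\]
replaces $N$ by a larger square matrix with entries in $P_{d-1}$. Iterating drives the entry-degree down to $1$. This sidesteps the addition problem altogether and is both shorter and fully rigorous.
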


Let us for the moment take the lemma for granted and prove Roy's Theorem.

\begin{proof}[Proof of Theorem~\ref{t:roy2}]

Assume the Structural Rank Conjecture. Suppose $c_1, \dots, c_n \in \sL$ are linearly independent over $\Q$ and that 
$ P(c_1, \dots, c_n) = 0 $ for some nonzero $P \in \Q[x_1, \dots, x_n]$.
As in Lemma~\ref{l:roy}, let $N$ be a square matrix with coefficients in $\Q + \Q x_1 + \dots + \Q x_n$ 
such that $\det(N) = P$.

 Let $M$ be the matrix $N$ with $x_i$ replaced by $c_i$.  We then have $\det(M) =0$. 
Note that the matrix $M_x$ in the Structural Rank Conjecture is the homogenization of the matrix $N$, with
 coefficients in $\Q x_0 + \Q x_1 + \cdots + \Q x_n$.  We are using here that the $c_i$ are $\Q$-linearly independent from 1, since $e$ is transcendental.  The conjecture implies that $\det(M_x) = 0$, whence $\det(N) = 0$ by specializing $x_0=1$, a contradiction.
 \end{proof}
 
 It remains now to prove Lemma~\ref{l:roy}.  We first remark that this lemma is actually the starting point of an important avenue of research in theoretical computer science, where the lemma is usually attributed to Valiant.  There are well-known efficient algorithms for calculating the determinant of a matrix, so expressing a general polynomial as a determinant gives an algorithm for efficiently calculating values of a polynomial.  The minimal dimension of matrix necessary to express a given polynomial as a determinant is known as the {\em determinantal complexity} of the polynomial.  The study of the growth of determinantal complexity in families of polynomials is a topic with an extensive literature.
 
 We follow Roy's proof of Lemma~\ref{l:roy}.  We need to establish two sublemmas.
 
\begin{lemma} \label{l:ab} For a nonegative integer $d$, 
 let $P_d \subset k[x_1, \dots, x_n]$  denote the $k$-subspace of polynomials of total degree $\le d$.
Given $N \in M_{m \times m}(P_d)$ with $d \ge 1$, there exists an integer $s$ and matrices 
\[ A \in M_{m \times s}(P_{d-1}), \quad B \in M_{s \times m}(P_1) \] such that $N = AB$.
\end{lemma}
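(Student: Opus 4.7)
The plan is to write down $A$ and $B$ explicitly via an elementary decomposition of polynomials in $P_d$ into products of a linear form and a polynomial of degree $\le d-1$.

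The starting observation is that every $p \in P_d$ with $d \ge 1$ admits a decomposition
\[ p = r_0 \cdot 1 + \sum_{k=1}^n r_k \cdot x_k, \qquad r_k \in P_{d-1}. \]
To see this, split off the constant term of $p$ as $r_0$; for the remaining monomials (each of total degree between $1$ and $d$), group them according to, say, the smallest-indexed variable occurring in the monomial, and factor that variable out. What is left inside the bracket has total degree at most $d-1$, and so belongs to $P_{d-1}$. This gives the desired representation in terms of the $k$-fold basis $\ell_0 := 1$, $\ell_k := x_k$ for $1 \le k \le n$ of $P_1$.

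Now I apply this entry-wise to $N$. For each pair $(i,j)$ with $1 \le i,j \le m$, write
\[ N_{ij} = \sum_{k=0}^n r_{ij,k}\, \ell_k, \qquad r_{ij,k} \in P_{d-1}. \]
Set $s = m(n+1)$ and index the inner dimension by pairs $(j',k)$ with $1 \le j' \le m$ and $0 \le k \le n$. Define
\[ A_{i,(j',k)} := r_{ij',k} \in P_{d-1}, \qquad B_{(j',k),j} := \delta_{j,j'}\, \ell_k \in P_1. \]
Then $A \in M_{m \times s}(P_{d-1})$ and $B \in M_{s \times m}(P_1)$, as required.

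Finally, verify the factorization directly:
\[ (AB)_{ij} = \sum_{j'=1}^m \sum_{k=0}^n r_{ij',k}\, \delta_{j,j'}\, \ell_k = \sum_{k=0}^n r_{ij,k}\, \ell_k = N_{ij}. \]
There is really no hard step here; the only thing to notice is the decomposition in the first paragraph, after which the matrix construction is essentially bookkeeping. The bound on $s$ is not important for the lemma, but the construction shows that one can take $s = m(n+1)$.
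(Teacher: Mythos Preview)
Your proof is correct and follows essentially the same approach as the paper: both decompose each entry of $N$ as a $P_{d-1}$-linear combination of $1, x_1, \dots, x_n$, take $s = m(n+1)$, and build $B$ as the Kronecker product of the column vector $(1, x_1, \dots, x_n)^t$ with the $m \times m$ identity (your $B_{(j',k),j} = \delta_{j,j'}\ell_k$ is exactly this). Your write-up is slightly more explicit in justifying the entrywise decomposition and in verifying the product, but the construction is the same.
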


\begin{proof}
Let $N = (a_{i,j})$ with $a_{i,j} \in P_d$. We can write \[a_{i,j} = \sum_{\ell=1}^{n} c_{i,j,\ell} x_\ell + c_{i,j,n+1}\] with $c_{i,j,\ell} \in P_{d-1}$ for $1 \le \ell \le n+1$. Let
\[ c_{i,j} = (c_{i,j,\ell}) \in M_{1 \times (n+1)}(P_{d-1}), \quad x = \begin{pmatrix} x_1 \\ x_2 \\ \vdots \\ x_n \\ 1 \end{pmatrix}
 \in M_{(n+1) \times 1}(P_1). \]
Define
\[ A = (c_{i,j}) \in M_{m \times m(n+1)}(P_{d-1}), \quad B = x \otimes 1_{m \times m} \in M_{(n+1)m \times m}(P_1). \]
Then one calculates that $N=AB$.
\end{proof}

The matrices $A$ and $B$ in Lemma~\ref{l:ab} are not square, so we cannot recursively apply the lemma.  This is resolved by the following observation.

\begin{lemma} \label{l:square}
  Let $A \in M_{m \times s}, B \in M_{s \times m}$. Then \[ \det(AB) = \det\begin{pmatrix} I_s & B \\ -A & 0 \end{pmatrix}, \]
  where the matrix on the right is square of dimension $m+s$.
  \end{lemma}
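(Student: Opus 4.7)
The plan is to recognize this as a standard block-matrix (Schur complement) identity and prove it by a single block row operation. The idea is to left-multiply the matrix on the right-hand side by an explicit unimodular block matrix so as to clear the $-A$ block and produce $AB$ in the lower-right corner.

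Concretely, I would consider the elementary block matrix
\[ E = \begin{pmatrix} I_s & 0 \\ A & I_m \end{pmatrix}, \]
which is block lower triangular with identity diagonal blocks, hence $\det(E) = 1$. A direct block multiplication gives
\[ E \cdot \begin{pmatrix} I_s & B \\ -A & 0 \end{pmatrix} = \begin{pmatrix} I_s & B \\ 0 & AB \end{pmatrix}. \]
The right-hand side is block upper triangular with diagonal blocks $I_s$ and $AB$, so its determinant equals $\det(I_s)\det(AB) = \det(AB)$. Taking determinants of both sides of the previous equation and using $\det(E) = 1$ then yields the desired identity.

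The only non-routine step is verifying the block-triangular determinant formula $\det\begin{pmatrix} X & Y \\ 0 & Z \end{pmatrix} = \det(X)\det(Z)$ for square $X$ and $Z$, which is standard (expand by the first $s$ columns, or use multilinearity). Since $I_s$ is the identity and thus trivially invertible, there are no subtleties about working over a general commutative ring $k$: the block operation and the triangular determinant formula both hold over any commutative ring, so the proof goes through verbatim in the generality needed for Lemma~\ref{l:roy}.

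No substantial obstacle is expected; the lemma is essentially a one-line computation once the correct unimodular block matrix $E$ is written down.
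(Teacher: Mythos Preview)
Your proof is correct and takes essentially the same approach as the paper: multiply by a unimodular block-triangular matrix to reduce to a block-triangular form whose determinant is manifestly $\det(AB)$. The paper additionally right-multiplies by $\begin{pmatrix} I_s & -B \\ 0 & I_m \end{pmatrix}$ to reach block-diagonal form, but your single left-multiplication already suffices.
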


\begin{proof}  We simply note that
\[ \begin{pmatrix} I_s & 0 \\ A & I_m \end{pmatrix} \begin{pmatrix} I_s & B \\ -A & 0 \end{pmatrix}\begin{pmatrix} I_s & -B \\ 0 & I_m \end{pmatrix} = \begin{pmatrix} I_s & 0 \\ 0 & AB \end{pmatrix} \]
and take determinants of both sides.
\end{proof}

We can now prove our main lemma.

\begin{proof}[Proof of Lemma~\ref{l:roy}]
 More generally, it now follows by induction on $d$ that for any matrix $N \in M_{m\times m}(P_d)$, there exists a matrix $N' \in M_{m' \times m'}(P_1)$ such that $\det(N) = \det(N')$. 

The base case $d=1$ is trivial. For  $d > 1$ we use Lemma~\ref{l:ab} to write $N=AB$ with $A \in M_{m \times s}(P_{d-1})$ and $B \in M_{s \times m}(P_1)$.   Lemma~\ref{l:square} then yields $\det(N) = \det(N')$ with $N' \in M_{(m+s) \times (m+s)}(P_{d-1})$.  The induction is now complete.  

The lemma is the case where we start with a $1\times 1$ matrix in $P_d$.
\end{proof}

 \section{The theorem of Waldschmidt and Masser} \label{s:wm}
 
To our knowledge, the strongest general unconditional result toward the Structural Rank Conjecture is Theorem~\ref{t:wm} of   Waldschmidt and Masser stated in the introduction \cite{w}.  For the sake of variety, we will prove the $p$-adic version of the conjecture in this section, though the proof of the archimedean version is essentially the same.
The statement of the $p$-adic version is exactly the same as the archimedean one, with $\sL$ replaced by $\sL_p$.

\begin{theorem}[Waldschmidt--Masser] \label{t:wm2} Let $m, n$ be positive integers and 
let $M \in M_{m \times n}(\sL_p)$.  Suppose that \[ rank(M)  < mn/(m+n). \]
Then there exist $P \in \GL_m(\Q)$ and $Q \in \GL_n(\Q)$ such that $PMQ = \mat{M_1}{0}{M_2}{M_3}$ 
where the 0 block has dimension $m' \times n' $ with $m'/m + n'/n > 1$.
\end{theorem}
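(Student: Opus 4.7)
The plan is to mirror the Baker-style transcendence method of Section~2, translated to the $p$-adic setting, and then to feed the output into a combinatorial matching argument that produces the block decomposition. Throughout, write $\ell_{i,j} = \log_p(\alpha_{i,j})$ with $\alpha_{i,j} \in \overline{\Q}^*$, put $r = \rank(M)$, and observe that by hypothesis $r(m+n) < mn$.

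For the transcendence phase, I would use the rank-$r$ structure of $M$ to choose matrices $U \in M_{m \times r}(\C_p)$ and $V \in M_{n \times r}(\C_p)$ realizing $M = UV^T$, thereby compressing the relevant analytic data from $mn$ to $r(m+n)$ essential coordinates. For large integer parameters $L, S$ and integer unknowns $p_\lambda$ indexed by $\lambda \in \{0,\dotsc,L\}^{m \times n}$, I would form a $p$-adic entire function of several variables of the form
\[
 \Phi(z) = \sum_\lambda p_\lambda \exp_p\!\Big(\sum_{i,j} \lambda_{i,j}\, \ell_{i,j}\, f_{i,j}(z)\Big),
\]
where the auxiliary linear forms $f_{i,j}(z)$ are chosen so that Siegel's Lemma, applied to the condition that $\Phi$ and sufficiently many partial derivatives vanish at a box of integer points, admits a nontrivial solution precisely when $r(m+n) < mn$. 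I would then apply a $p$-adic analogue of Baker's Lemma, using the $p$-adic maximum modulus principle on polydisks (which is cleaner than the archimedean Schwarz argument, since there is no loss in changing radii), to obtain smallness of $\Phi$ on a substantially larger set, and use the $p$-adic version of Lemma~\ref{l:discrete} to turn smallness into outright vanishing. A bootstrap as in Section~2.4 eventually produces enough zeros to invoke a multivariate Vandermonde argument generalizing Theorem~\ref{t:vand}, yielding a nontrivial multiplicative relation $\prod_{i,j} \alpha_{i,j}^{\mu_{i,j}} = 1$, or equivalently a nontrivial $\Q$-linear relation among the $\ell_{i,j}$.

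The combinatorial phase then upgrades this to the block decomposition. Applying the transcendence phase to $PMQ$ for many choices of $(P,Q) \in \GL_m(\Q)\times\GL_n(\Q)$ produces an abundant collection of $\Q$-linear relations among the entries of the various transformed matrices. A Hall-marriage-type argument on the bipartite row--column incidence structure then extracts a single $(P,Q)$ for which $PMQ$ has a zero block of shape satisfying $m'/m + n'/n > 1$; the threshold $mn/(m+n)$ in the rank hypothesis is exactly what makes Hall's condition hold.

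The principal obstacle is not the $p$-adic analysis, which parallels Baker's proof almost verbatim, but the combinatorial extraction in the final phase: one must calibrate Siegel's Lemma so that the relations produced are genuinely independent of the tautological ones implied by the rank-$r$ condition, and are abundant enough to saturate Hall's marriage condition at the sharp threshold. This delicate counting is the technical heart of Waldschmidt's and Masser's original work and of the Stansifer exposition that this note follows.
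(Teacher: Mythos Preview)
Your proposal has a genuine gap in both phases, and the overall architecture does not match how the theorem is actually proved.

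For the transcendence phase, the declared output --- a single nontrivial multiplicative relation $\prod_{i,j}\alpha_{i,j}^{\mu_{i,j}}=1$, i.e.\ a $\Q$-linear relation among all the $\ell_{i,j}$ --- is far too weak to feed into any block-extraction argument. Such relations are cheap (indeed the hypothesis $\rank M < mn$ already forces $\C_p$-linear relations, and nothing in your sketch explains why your Vandermonde step would produce a relation structurally different from these). What the paper's Waldschmidt step actually produces is much stronger and of a different shape: a nonzero polynomial $P\in\Z[t_1,\dots,t_n]$ in $n$ variables, of total degree $<N^{m/n-\epsilon}$, vanishing on the entire set $X(N)=\{\prod_i x_i^{a_i}:0\le a_i\le N\}\subset(\overline{\Q}^*)^n$. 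The asymmetry between $m$ and $n$ is essential: the polynomial lives in $n$ variables, its zeros are parametrized by $m$-tuples, and the tension between the degree bound $N^{m/n-\epsilon}$ and the roughly $N^m$ zeros is precisely what encodes the inequality $r<mn/(m+n)$. Your function $\Phi$, indexed by $\lambda\in\{0,\dots,L\}^{m\times n}$ and aiming at a single scalar relation, does not set up this counting.

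For the combinatorial phase, there is no Hall-marriage argument in the paper, and I do not see how one would work: a bipartite matching cannot detect the pairing $\langle A,B\rangle_X=1$ that you need. Masser's step is instead a commutative-algebra argument. One shows (Lemma~\ref{l:height}) that if a prime $\fp\subset\fm$ of height $n'$ has $\Stab_X(\fp)$ of rank $m'$, then there are orthogonal subgroups $A\subset\Z^m$, $B\subset\Z^n$ of those ranks. One then proves (Theorem~\ref{t:Pexists}, via Theorem~\ref{t:deta}) that the low-degree polynomial vanishing on $X(N)$ forces such a prime with $m'/m+n'/n>1$ to exist; the proof is an inductive construction of ideals $\fa_r=(f_1,\dots,f_r)$ with $\fa_r^*$ unmixed of height $r$ and degree $\le D^r$, using associated primes, Krull's height theorem, and Hilbert-polynomial degree bounds. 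This algebro-geometric machinery is the actual ``technical heart,'' and it is not a matching argument.
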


\subsection{Applications}

Let us state some applications of the complex and $p$-adic Waldschmidt--Masser theorems.
The six exponentials theorem, which had been proven earlier in the 1960s, is a corollary of the Waldschmidt--Masser theorem.

\begin{proof}[Proof of Theorem~\ref{t:six}]  The case where $M=0$ is trivial. Therefore suppose $M \in M_{2 \times 3}(\sL)$ has rank 1. Since $1 < 6/5$, the 
Waldschmidt--Masser theorem implies that after a rational change of basis on the left and right, the matrix $M$ has the block matrix form \[ PMQ = \mat{M_1}{0}{M_2}{M_3} \] where the 0 block has dimension $1 \times 2$ or $2 \times 1$.
In the first case, our matrix has the form
\[ PMQ = \begin{pmatrix} * & 0 & 0 \\
* & * & *
\end{pmatrix}.
\]
Such a matrix has rank 1 only if it has the form
\[ 
PMQ = \begin{pmatrix} 0 & 0 & 0 \\
* & * & *
\end{pmatrix} \quad \text{ or } \quad 
PMQ = \begin{pmatrix} * & 0 & 0 \\
* & 0 & 0
\end{pmatrix}.
\]
In the first case, we see that $PM$ has the same shape, which says that the rows of $M$ are linearly dependent over $\Q$. In the second case we see that $MQ$ has the same shape, which says that the columns of $M$ are linearly dependent over $\Q$.  The case where the original block of 0's has dimension $2 \times 1$ is similar.
\end{proof}

In the case of a square matrix, the Waldschmidt--Masser theorem simplifies to the following. 
 
\begin{corollary}
Let $M \in M_{n \times n}(\sL)$ or $M_{n \times n}(\sL_p)$.  Suppose that $\rank(M) < n/2$.  Then there exist $P,Q \in \GL_n(\Q)$ such that
\[ PMQ = \begin{pmatrix}
M_1 & 0 \\
M_2 & M_3 
\end{pmatrix} \quad \text{(block matrix)}
\]
 where the $0$ block has dimension $m \times m'$ with $m + m' > n$.
\end{corollary}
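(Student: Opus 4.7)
The plan is to derive this corollary as a direct specialization of Theorem~\ref{t:wm2} to the case of a square matrix. Setting $m = n$ in the general Waldschmidt--Masser statement, the hypothesis $\rank(M) < mn/(m+n)$ becomes $\rank(M) < n^{2}/(2n) = n/2$, which matches precisely the hypothesis of the corollary. Consequently, Theorem~\ref{t:wm2} applies verbatim (in either its archimedean form for $\sL$ or its $p$-adic form for $\sL_p$) and produces matrices $P, Q \in \GL_{n}(\Q)$ together with a block decomposition
\[ PMQ = \begin{pmatrix} M_1 & 0 \\ M_2 & M_3 \end{pmatrix} \]
where the zero block has some dimension $a \times b$ satisfying $a/n + b/n > 1$.

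The only remaining bookkeeping is a renaming to match the statement of the corollary: writing $m := a$ and $m' := b$ for the dimensions of the zero block, the inequality $a/n + b/n > 1$ becomes $m + m' > n$, which is precisely the required conclusion. No extra argument is needed, so in particular there is no genuine obstacle here—the content of the corollary is entirely contained in Theorem~\ref{t:wm2}, and the proof is simply the observation that the square case forces $mn/(m+n)$ to collapse to $n/2$ and that the dimension inequality on the zero block rescales to an absolute inequality.
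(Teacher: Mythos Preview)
Your proposal is correct and matches the paper's approach exactly: the paper presents this corollary without proof, simply remarking that ``in the case of a square matrix, the Waldschmidt--Masser theorem simplifies to the following,'' and your argument spells out precisely that specialization $m=n$ together with the trivial rewriting of $m'/m + n'/n > 1$ as $m + m' > n$.
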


\begin{corollary}
  The Leopoldt regulator matrix and the Gross--Kuz'min regulator matrix have rank at least half their expected ranks. 
  \end{corollary}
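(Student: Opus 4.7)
\medskip
\noindent\textbf{Proof plan.}
The strategy for both regulators is the same: apply the square-matrix corollary of the Waldschmidt--Masser theorem to an appropriate $r \times r$ submatrix, argue by contradiction, and transfer the zero block produced by Waldschmidt--Masser to a ``companion'' matrix of known full rank $r$. If $\rank < r/2$ then Waldschmidt--Masser supplies $P, Q \in \GL_r(\Q)$ such that $PMQ$ carries a zero block of dimensions $m \times m'$ with $m + m' > r$; if the same block vanishes in a nonsingular companion matrix $PNQ$, the elementary pigeonhole bound $\rank \le (r-m')+(r-m) < r$ gives a contradiction.  The heart of each proof is therefore producing the companion and showing that the zero block transfers.

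For the Leopoldt matrix, use the classical nonvanishing of the Dirichlet regulator to pick $r$ embeddings $\sigma_1, \ldots, \sigma_r$ of $F$ so that the $r \times r$ submatrix $N = (\log|\sigma_j(u_i)|)$ is nonsingular; let $M = (\log_p \sigma_j(u_i))$ be the corresponding $p$-adic submatrix.  Each entry $(PMQ)_{ij}$ has the form $\log_p \gamma_{ij}$ where $\gamma_{ij} \in \overline{\Q}^*$ is a product of the units $\sigma_l(u_k)$ with rational exponents, hence itself a global unit.  Among global units the kernel of $\log_p$ consists precisely of roots of unity (by the Brumer--Baker-style kernel description), so vanishing forces $|\gamma_{ij}| = 1$ and $(PNQ)_{ij} = 0$, transferring the zero block to the companion and contradicting $\rank N = r$.

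For the Gross--Kuz'min regulator $R = \ell_p \circ o_p^{-1} \in M_{r\times r}(\sL_p)$, work in the basis $u_l = o_p^{-1}(e_l)$ of $U_p^-$, so that $R_{kl} = \log_p \eta_{kl}$ with $\eta_{kl} = \N_{H_{\fP_k}/\Q_p}(u_l)/p^{f_{\fP_k}\delta_{kl}} \in \overline{\Q}^*$ a $p$-adic unit at our fixed embedding $\overline{\Q} \hookrightarrow \C_p$.  The defining condition $|u|_w = 1$ for $w \nmid p$ in $U_p^-$, applied at archimedean places, forces $|\eta_{kl}|_\infty = p^{-f_{\fP_k}\delta_{kl}}$, whence the companion $N := (\log|\eta_{kl}|_\infty) = -\log(p)\cdot \diag(f_{\fP_1},\dots,f_{\fP_r})$ is nonsingular.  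Given the zero block of $PRQ$, each $\gamma_{ij} = \prod \eta_{kl}^{P_{ik}Q_{lj}}$ is a $p$-adic unit (since the $\eta_{kl}$ are) with trivial $\log_p$, hence an algebraic root of unity, so $|\gamma_{ij}|_\infty = 1$ and $(PNQ)_{ij} = 0$ as in the Leopoldt case.  The main subtlety—and the principal obstacle—lies precisely in this normalization: without the choice $u_l = o_p^{-1}(e_l)$ the $\eta_{kl}$ are not $p$-adic units, the products $\gamma_{ij}$ acquire nontrivial $p$-adic valuations, and the transfer of the zero block to the diagonal companion fails.
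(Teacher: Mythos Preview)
Your argument is correct and follows the paper's strategy: pick an $r\times r$ submatrix $M$ whose archimedean (resp.\ valuation-theoretic) companion $M'$ is known to be nonsingular, apply the square-matrix corollary of Waldschmidt--Masser to $M$, and transfer the rational zero block to $M'$ to obtain a contradiction.  For the Leopoldt case this is exactly the paper's proof, with the transfer step (global unit $\Rightarrow$ $p$-adic unit $\Rightarrow$ root of unity $\Rightarrow$ $\log|\cdot|=0$) made explicit rather than left implicit.

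For Gross--Kuz'min the paper says only ``use $\ord_p$ in place of $\log|\cdot|$,'' and you have unpacked what this entails.  The paper's intended companion is the $o_p$-matrix $(\ord_{\fP_k}(u_l))$, which in your basis is the identity; after scaling by residue degrees this is exactly your $N=-\log p\cdot\diag(f_{\fP_k})$, so the two companions coincide up to an invertible diagonal.  Your presentation via $\log|\eta_{kl}|_\infty$ is a different route to the \emph{same} transfer: writing $\eta_{kl}=\N_{H_{\fP_k}/\Q_p}(u_l)/p^{f_{\fP_k}\delta_{kl}}$ makes the entries $p$-adic units without changing $\log_p$, and then the archimedean condition $|u|_w=1$ for $w\mid\infty$ on $U_p^-$ forces $|\N_{H_{\fP_k}/\Q_p}(u_l)|_\infty=1$ (since this norm is a product of archimedean conjugates of $u_l$).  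This is precisely the step the paper suppresses; you are right to flag the normalization as the crux, since without it the products $\gamma_{ij}$ are not $p$-adic units and the root-of-unity argument fails.

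Two small points worth tightening in a final write-up: the identification of the local norm $\N_{H_{\fP_k}/\Q_p}(u_l)\in\Q_p^*$ with an explicit algebraic number (as a product of embeddings of $u_l$ inducing $\fP_k$) deserves a sentence, and the passage to the quotient $X_p^-$ means your index $k$ should run over representatives of pairs $\{\fP,\bar\fP\}$ of primes split in $H/H^+$.  Neither affects the validity of the argument.
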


\begin{proof}  Let $r$ be the expected rank of the Leopoldt matrix.  Let \[ M' = (\log |\sigma_j(u_i)|)_{i,j=1, \dotsc, r}\] be
an $r \times r$ submatrix of the {\em archimedean} regulator with $\det(M') \neq 0$.  Let \[ M = (\log_p \sigma_j(u_i))_{i,j=1, \dotsc, r} \] be the corresponding submatrix of the Leopoldt matrix.  

If the rank of the Leopoldt matrix is less than $r/2$, the same is true for $M$. The Waldschmidt--Masser theorem then implies that there exist $P, Q \in \GL_r(\Q)$ such that $PMQ$ has an upper right 0 block with dimension $m \times m'$, where $m + m' > r$. 
 But then $PM'Q$ has this same property.  This implies that $\det(M') =0 $, a contradiction. 

The same proof works for Gross's regulator, using $\ord_p$ instead of $\log |\cdot |$. 
\end{proof}

\subsection{Auxiliary Polynomial} \label{s:aux}

As in the proof of Baker's theorem, the Waldschmidt--Masser theorem is proven by constructing, under the assumptions of the theorem, a suitable  auxiliary polynomial whose existence implies the conclusion of the theorem.  Waldschmidt's result is that the auxiliary polynomial exists, and Masser's theorem is that this polynomial gives the desired conclusion.  Let us describe this in greater detail.

We have $M=(a_{i,j})$ with $a_{i,j} = \log_p(x_{i,j}) \in \sL_p$.  Here $x_{i,j}  \in \overline{\Q}^*$.  After scaling $M$ if necessary, we may assume that $|x_{i,j} - 1|_p < 1$.
  For $i=1,\dots, m$, let 
\[ x_i = (x_{i,j})_{j=1,\dotsc, n} \in (\overline{\Q}^*)^n \subset (\C_p^*)^n. \]
Let $X = \langle x_i \rangle \subset (\overline{\Q}^*)^n$ be the subgroup generated by the $x_i$.   For each positive integer $N$, define
\[ X(N) = \left\{ \prod_{i=1}^{m} x_i^{a_i} \colon 0 \le a_i \le N \right\}. \]

For a polynomial $P$ in several variables, we write $\deg(P)$ for the total degree of $P$.

\begin{theorem}[Waldschmidt] \label{t:w}
Suppose $r=\rank(M) < mn/(m+n)$.  There exists $\epsilon > 0$ such that for all $N$ sufficiently large, there exists a nonzero $P \in \Z[t_1, \dots, t_n]$ such that $\deg(P) < N^{m/n - \epsilon}$ and $P(x) = 0$ for all $x \in X(N)$.
\end{theorem}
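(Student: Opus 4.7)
My plan is to apply Siegel's lemma (Lemma 2.4) to the linear system whose unknowns are the coefficients $c_\lambda$ of $P$ with $|\lambda| < D$ and whose equations enforce $P(x) = 0$ for each $x \in X(N)$. Set $D = \lfloor N^{m/n - \epsilon}\rfloor$ for a small $\epsilon > 0$ to be chosen at the end; then there are $\binom{D+n-1}{n-1} \sim N^{m - n\epsilon}$ unknowns. Writing $K = \Q(x_{i,j})$, each algebraic equation $P(x) = 0$ expands into $[K:\Q]$ rational equations via a fixed $\Q$-basis of $K$, with coefficient heights polynomial in $N$ and $D$ via the heights of the $x_{i,j}$. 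To conclude via Siegel's lemma it then suffices to bound the rank of this $\Z$-linear system strictly below $\binom{D+n-1}{n-1}/2$.

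The key is to exploit the rank hypothesis $r = \rank(M) < mn/(m+n)$. The guiding observation: for $x = \prod_i x_i^{a_i}$, the componentwise $p$-adic logarithm $\log_p(x) = a^T M$ lies in the $r$-dimensional row span of $M$ inside $\C_p^n$. Since we have arranged $|x_{i,j} - 1|_p < 1$ by rescaling, the map $z \in \C_p^m \mapsto \prod_i x_i^{z_i} \in (\C_p^*)^n$ is $p$-adic analytic, and the auxiliary function $\Phi(z) := P(\prod_i x_i^{z_i})$ factors through the linear projection onto this $r$-dimensional subspace. Thus $\Phi$ is effectively an entire $p$-adic function of only $r$ variables, not $m$. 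The vanishing condition $\Phi(a) = 0$ for $a \in [0,N]^m$ then translates into vanishing of an $r$-variable entire function at the image of $[0,N]^m$ under a $p$-adic linear projection. A $p$-adic zero estimate (or interpolation-determinant argument) in the $r$-variable setting, combined with the inequality $r(m+n) < mn$, bounds the number of independent conditions imposed strictly below $\binom{D+n-1}{n-1}/2$ for a suitable $\epsilon > 0$, and Siegel's lemma delivers the required polynomial.

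The hard part will be step two: quantifying the reduction from $m$ to $r$ effective variables via a $p$-adic zero estimate in several variables, and ensuring the resulting bound applies to the $\Q$-rank of the Siegel system rather than merely its $\C_p$-rank. The latter step may require the Baker--Brumer theorem (Theorem~\ref{t:brumer}) in order to pass from $\C_p$-linear dependencies among entries of $M$ down to $\Q$-linear dependencies at the level of the coefficient matrix of the Siegel system. Secondary technical issues include keeping the $p$-adic exponential and logarithm within their domains of convergence throughout the analytic manipulations, and tracking heights and $p$-adic denominators with enough precision to invoke Siegel's lemma over $\Z$. Once these bookkeeping estimates and the zero estimate are in hand, the parameters $D \sim N^{m/n-\epsilon}$ balance so that the number of unknowns strictly dominates twice the rank of the system, yielding the polynomial $P$ claimed in the theorem.
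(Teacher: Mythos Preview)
Your central insight—that the rank hypothesis makes $\Phi(z) = P\bigl(\prod_i x_i^{z_i}\bigr)$ a $p$-adic analytic function of only $r$ variables rather than $m$—is exactly right, and it drives both of the paper's proofs. But the packaging around Siegel's lemma creates a spurious difficulty, and the suggestion that Baker--Brumer might be needed to pass from $\C_p$-rank to $\Q$-rank is a red herring.

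The paper's second proof is closest to your sketch but sidesteps your worry entirely. It forms the interpolation matrix $L = (z^y)$ with entries in the number field $K = \Q(x_{i,j})$ and shows directly that every $d^n \times d^n$ minor vanishes: Laurent's interpolation-determinant lemma forces the $p$-adic absolute value to be tiny (this is precisely where the $r$-variable reduction enters), the archimedean size is moderate, and the product formula kills the algebraic number. A nonzero kernel vector over $K$ then gives $P \in K[t_1,\dots,t_n]$, which is all Masser's theorem requires—no Siegel's lemma, no $\Q$-versus-$\C_p$ issue. The paper's first proof, if you want honest $\Z$-coefficients, takes a quite different route: it maps $\Z[t_1,\dots,t_n]$ into the finite ring $(\Z/p^c\Z)[[u_1,\dots,u_r]]/(u_1^c,\dots,u_r^c)$ using the rank condition to supply the $r$ variables, pigeonholes among small-coefficient polynomials to find $P$ with $\varphi_c(P)=0$, and converts the congruence $p^c \mid P(z)$ into $P(z) = 0$ by an archimedean size bound. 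Neither proof touches Baker--Brumer; and even if you insisted on running Siegel over $\Z$ after bounding the $K$-rank of $L$ by some $\rho$, the $\Q$-rank of the expanded system is trivially at most $[K:\Q]\,\rho$, a constant factor absorbed by adjusting $\epsilon$.
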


Waldschmidt's theorem is the ``transcendence'' part of  Theorem~\ref{t:wm}.   Masser's theorem, which is a purely algebro-geometric statement, takes the existence of an auxiliary polynomal $P$ as above and deduces the relations
necessary to give the desired result about the original matrix $M$. We will describe  the statement of Masser's theorem precisely in a moment, but first let us comment about the numerology concerning the auxiliary polynomial in the statement of Theorem~\ref{t:w}.  We can view the existence of a polynomial with prescribed zeroes as a system of linear equations in the coefficients of the polynomial.  Each zero gives one such linear equation.  If the $x_{i,j}$ are generic, the size of $X(N)$ is $(N+1)^m$. 
A polynomial of degree  $ < d$ has less than $d^n$ coefficients.  Therefore, if the $x_{i,j}$ are generic, we expect that we would require $d^n \ge (N+1)^m$ for a polynomial to exist, in particular $d > N^{m/n}$.  For this reason, the existence of the auxiliary polynomial $P$ in Theorem~\ref{t:w} does not hold for generic ${x_{i,j}}$.

Let us now state Masser's Theorem precisely.
Let $k$ be a field of characteristic 0, let $(x_{i,j}) \in M_{m\times n}(k^*)$. Define $X$ and $X(N)$ as above.
Define a pairing \begin{align*} \Z^m \times \Z^n &\longrightarrow k^* \\
\langle (a_i), (b_j) \rangle &= \prod_{i,j} x_{i,j}^{a_ib_j}.
\end{align*}

\begin{theorem}[Masser] \label{t:m}
 Let $N > 0$ and suppose there exists $P \in k[t_1, \dots, t_n]$ such that $\deg(P) < (N/n)^{m/n}$ and $P(x)=0$ for all $x \in X(N)$.  Then there exist subgroups $A \subset \Z^m$, $B \subset \Z^n$ of ranks $m', n'$, respectively, with $\langle A, B \rangle = 1$ and $m'/m + n'/n > 1$.
\end{theorem}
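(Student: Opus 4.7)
Let $\Psi\colon \Z^m \to (k^*)^n$ be the homomorphism $a \mapsto \prod_i x_i^{a_i}$, and let $B^* := \{b \in \Z^n : x_i^b = 1 \text{ for every } i\}$.  Two easy reductions dispose of the degenerate cases: if $\ker \Psi$ has positive rank, take $A = \ker \Psi$ and $B = \Z^n$, so that $\langle A, B\rangle = \{1\}$ and $m'/m + 1 > 1$; symmetrically, if $B^*$ has positive rank, take $A = \Z^m$ and $B = B^*$.  Thus we may assume $\Psi$ is injective and $B^* = 0$, which guarantees that the characters $\chi_\mu \colon a \mapsto \Psi(a)^\mu = \prod_i (x_i^\mu)^{a_i}$ of $\Z^m$ are pairwise distinct as $\mu$ ranges over $\Z^n$.

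Under this nondegeneracy, the vanishing of $P$ on $X(N)$ becomes the $k$-linear dependence
\[
\sum_{\mu \in S} c_\mu \chi_\mu(a) = 0 \qquad \text{for every } a \in \{0, \dotsc, N\}^m,
\]
where $S = \operatorname{supp}(P)$ and $|S|$ is controlled by the degree bound.  The plan is to show that this dependence forces the points $y^{(\mu)} := (x_1^\mu, \dotsc, x_m^\mu) \in (k^*)^m$, for $\mu \in S$, to lie in a translate of a proper algebraic subgroup $T \subsetneq (k^*)^m$.  Any such $T$ has the form $\{s \in (k^*)^m : s^a = 1 \text{ for all } a \in A\}$ for a saturated subgroup $A \subset \Z^m$ of rank $m' := m - \dim T$.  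Fixing any $\mu_0 \in S$ and setting $B := \langle \mu - \mu_0 : \mu \in S\rangle \subset \Z^n$, the relation $y^{(\mu)}/y^{(\mu_0)} \in T$ for every $\mu \in S$ unpacks to
\[
\langle a, b\rangle \; = \; \prod_i (x_i^b)^{a_i} \; = \; \bigl(y^{(b)}\bigr)^a \; = \; 1 \qquad \text{for every } a \in A,\ b \in B.
\]
The exact exponent $(N/n)^{m/n}$ in the degree hypothesis is calibrated so that the resulting ranks $m'$ and $n' := \rank(B)$ satisfy $m'/m + n'/n > 1$.

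The central difficulty is the extraction of the subtorus $T$: one must show that distinct characters of $\Z^m$ that are linearly dependent on a sufficiently large box must arise from points lying on a translate of a proper algebraic subgroup of $(k^*)^m$.  My plan is an iterated Vandermonde argument along the $m$ coordinate directions: in sufficiently general position the characters $\chi_\mu$ would be $k$-linearly independent on the box by successive Vandermonde determinants in each variable, contradicting $P \ne 0$; so some nontrivial multiplicative relation among the $y^{(\mu)}$ must hold, and an inductive packaging of all such relations produces $T$.  Balancing the codimension of $T$ against the rank of $B$, so that the asymmetric inequality $m'/m + n'/n > 1$ drops out of the asymmetric degree bound $(N/n)^{m/n}$, is the technical heart of Masser's argument.
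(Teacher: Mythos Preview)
Your proposal has a genuine gap at its core. The step you flag as ``the central difficulty'' --- showing that the linear dependence $\sum_{\mu \in S} c_\mu \chi_\mu(a) = 0$ on the box $\{0,\dots,N\}^m$ forces the points $y^{(\mu)}$ onto a single coset of a proper subtorus $T \subsetneq (k^*)^m$ --- is not established by an ``iterated Vandermonde.''  A Vandermonde in the $a_1$-direction only tells you that, for each fixed $(a_2,\dots,a_m)$, the values $x_1^\mu$ cannot all be distinct once $|S|$ exceeds $N+1$; but $|S|$ can be of order $(N/n)^m$, vastly larger than $N+1$, so you must group the $\mu$'s by the value of $x_1^\mu$ and recurse.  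What emerges from such a recursion is not a single subtorus containing all $y^{(\mu)}$ but a stratification, and nothing in your outline controls the codimension of any stratum.  More seriously, even granting such a $T$, your choice $B = \langle \mu - \mu_0 : \mu \in S\rangle$ has rank $n'$ determined solely by the shape of $\operatorname{supp}(P)$, with no link to $m' = \operatorname{codim} T$; the inequality $m'/m + n'/n > 1$ requires a trade-off between the two, and your argument provides no mechanism for it.  Saying the exponent $(N/n)^{m/n}$ is ``calibrated'' is not a proof.

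The paper's approach is quite different and is worth studying precisely because it supplies this mechanism.  It works on the $(k^*)^n$ side rather than your $(k^*)^m$ side: one lets $X$ act on prime ideals of $k[t_1,\dots,t_n]$ and seeks a prime $\fp \subset \fm$ of height $n'$ whose stabilizer in $\Z^m$ has rank $m'$ with $m'/m + n'/n > 1$.  The subgroup $B$ then comes from the geometry of $V(\fp)$ (Lemma~\ref{l:height}), not from $\operatorname{supp}(P)$.  The existence of such a $\fp$ (Theorem~\ref{t:Pexists}) is obtained by an inductive construction: one builds polynomials $f_1,\dots,f_r$, each a $\Z$-linear combination of $X(N_r)$-translates of $f=P$, such that the ideal $(f_1,\dots,f_r)^*$ is unmixed of height $r$ and degree $\le D^r$.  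The step from $r-1$ to $r$ uses a counting argument (the number of $X$-translates of an associated prime exceeds the number of associated primes, so some translate of $f_i$ avoids every associated prime) together with Krull's height theorem.  The parameters $\eta_{n'} = n'/(m-m')$ are exactly what make the degree bound $D < (N/n)^{m/n}$ equivalent to the failure of $m'/m + n'/n > 1$ for every $n'$, which is how the quantitative balance is struck.  A Vandermonde trick does appear in the paper, but only inside Lemma~\ref{l:height}, to pass from the stabilizer $A$ to the orthogonal $B$; it is not the main engine.
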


Theorems~\ref{t:w} and~\ref{t:m} combine to give Theorem~\ref{t:wm2}.  In the remainder of this section, we prove these two theorems.

\subsection{Waldschmidt's Theorem}

We will present two proofs of Waldschmidt's Theorem.

\subsubsection{Proof 1 of Waldschmidt's Theorem}
Our first proof is similar in spirit to Waldschmidt's original proof.
For simplicity, we will assume $x_{i,j} \in \Z$ and $x_{i,j} \equiv 1 \pmod{p}$.  Standard techniques (scaling by an integer to obtain algebraic integers, and taking norms to obtain integers) allow one to handle the general case, but we would like to avoid the extra notation required.  

Let $r$ denote the rank of the matrix $M \in M_{m \times n}(\Z_p)$.  After reordering columns if necessary, we can 
 assume that the last $n-r$ columns of $M$ are in the $\Z_p$-linear span of the first $r$ columns.  Then for each $i>r$, there exist $\lambda_{i,1}, \dots, \lambda_{i,r} \in \Z_p$ such that if $z=(z_1, \dotsc, z_n) \in X$, we have 
\begin{equation}
\label{e:zir}
 z_i = z_1^{\lambda_{i,1}} z_2^{\lambda_{i,2}} \cdots z_r^{\lambda_{i,r}}, \qquad i > r. 
\end{equation}
To make sense of the right hand side of this equality, note that
that for $\lambda \in \Z_p$, the function \begin{equation} \label{e:tl}
 t^\lambda = (1+ (t-1))^\lambda = \sum_{i=0}^{\infty} \binom{\lambda}{i} (t-1)^i 
 \end{equation} is a convergent power series in $t-1$.   Hence if $t \in 1 + p\Z_p$ then (\ref{e:tl}) converges in $\Z_p$.

Our goal is to find a polynomial $P \in  \Z[t_1, \dots, t_n]$ such that $P(z) = 0$ for $z \in X(N)$.
 Define $u_i = t_i - 1$ and consider the canonical map
\begin{align}
 \varphi\colon \Z[t_1, \dots, t_n] \longrightarrow & \ \Z_p[[u_1, \dots u_n]]/(t_i - t_1^{\lambda_{i,1}} \cdots t_r^{\lambda_{i,r}})_{i=r+1}^{n} \label{e:zq} \\
& \cong \Z_p[[u_1, \dots, u_r]]. \nonumber
\end{align}
The elements in the quotient in (\ref{e:zq}) are interpreted as power series in the $u_i$ via (\ref{e:tl}).

Fix a positive integer $c$. 
Define $\varphi_c$ to be the composition of $\varphi$ with the canonical reduction
\begin{equation} \label{e:pc}
 \Z_p[[u_1, \dots, u_r]] \longrightarrow (\Z/p^c\Z)[[u_1, \dots, u_r]]/(u_1^c, \dots, u_r^c). 
 \end{equation}
 If a polynomial $P \in  \Z[t_1, \dots, t_n]$ satisfies $\varphi_c(P) = 0$,  then $P(z)$ will be divisible by $p^c$ for any $z \in X$. Indeed, for $z \in X$,  $\varphi(P)(z) = P(z)$ is well-defined since the kernel of $\varphi$ vanishes on $X$.   Next, it is clear that $\varphi(P)(z) \pmod{p^c}$ depends only on the coefficients of $\varphi(P)$ modulo $p^c$.  Finally, we note that $z_i \equiv 1 \pmod{p} \Longrightarrow u_i \equiv 0 \pmod{p} \Longrightarrow u_i^c \equiv 0 \pmod{p^c}$.
 
 Now, the ring on the right in (\ref{e:pc}) is finite.  
The total number of monomials in $u_1, \dots, u_r$ modulo $(u_1^c, \dots, u_r^c)$ is 
$c^r$, so the total number of possible values of these coefficients mod $p^c$ is \[ (p^c)^{c^r} = p^{c^{r+1}}.\]
Therefore, by the Pigeonhole Principle, if we have a subset of $\Z[t_1, \dots, t_n]$ of size greater than $p^{c^{r+1}}$, then some two elements of the subset, say $P_1$ and $P_2$, will have equal image under $\varphi_c$, and the difference $P = P_1 - P_2$ will satisfy $P(z) \equiv 0 \pmod{p^c}$ for all $z \in X$.

We will take the subset of all polynomials with degree in each variable less than some constant $d$ with coefficients that are nonnegative integers less than $p^h$, for some constant $h$.  The size of this subset is $p^{hd^n}$, and hence the condition that we want is \begin{equation} \label{e:ph}
hd^n > c^{r+1}.
\end{equation}

Now, we also want to use the principle of ``discreteness of the integers'' discussed in the proof of Baker's Theorem to ensure that the condition $P(z) \equiv 0 \pmod{p^c}$ for $z \in X(N)$ implies that $P(z) = 0$.
For this, we need a crude bound on $|P(z)|$ (archimedean absolute value).  Suppose that $A$ is an upper bound on
$|x_{i,j}|$.  Then for each $z = (z_1, \dots, z_n) \in X(N)$ we have $|z_i| < A^{Nm}$.
Therefore each monomial in the evaluation of $P(z)$ has absolute value at most $p^h A^{Ndmn}$, and in total we obtain
\[ |P(z)| < d^n p^h A^{Ndmn}. \]
Therefore if 
\begin{equation} \label{e:dh}
d^n p^h A^{Ndmn} < p^c
\end{equation}
 then we indeed have the implication \[ P(z) \equiv 0 \pmod{p^c}\Longrightarrow P(z) = 0. \]

To prove the theorem, we set $d = \lfloor N^{m/n - \epsilon} /n \rfloor$, so that the constructed polynomial $P$ will have degree less than $N^{m/n - \epsilon}$ as required.   We search for parameters $h$ and $c$ such that both (\ref{e:ph}) and (\ref{e:dh}) hold.   Not surprisingly, these inequalities are pulling in the opposite direction---the first says that $h$ is large relative to $c$, and the second says that $h$ is small relative to $c$.
For $N$ large, the two inequalities will be satisfied if 
\[ h \cdot N^{m - n \epsilon} \gg c^{r+1}, \qquad c > k(\log N) +  h + k' N^{(m+n)/n - \epsilon} \]
for the appropriate constants $k, k'$.

If we set $c = N^{(m+n)/n + \epsilon}$ and $h = c N^{-\delta}$ for a small $\delta > 0$, 
then it is clear that the second inequality will hold for $N$ sufficiently large.
Plugging these parameters into the first inquality yields \[ m - \delta > \left(\frac{m}{n} + 1 + \epsilon\right)r.  \]
It is clear that we can choose small positive $\delta, \epsilon$ satisfying this inequality if 
\[ m >  \left(\frac{m}{n} + 1 \right)r, \qquad
\text{i.e., } \qquad r < \frac{mn}{m+n}. \]
This completes the proof.

\subsubsection{Proof 2 of Waldschmidt's Theorem}

For our second proof, we will return to the completely general case, i.e.\ we do not assume that $x_{i,j} \in \Z$, only that $x_{i,j} \in \overline{\Q}^*$.  Our motivation in giving the second proof is that it introduces an important topic in transcendence theory not discussed earlier, namely the theory of {\em interpolation determinants} pioneered by Michel Laurent. In \cite{laurent}, Laurent gave a new proof of the six exponentials theorem using his new theory.  The basic idea is that we will view the existence of the desired polynomial $P$ as the solution of a linear system of equations in the coefficients of the polynomial, and show that the associated determinant vanishes.

Again we will construct a polynomial $P$ such that the degree in each variable is  less than $d = \lfloor N^{m/n - \epsilon} /n \rfloor$ such that $P(z) = 0$ for all $z \in X(N)$.
Consider the  matrix whose rows are indexed by our desired zeroes $z \in X(N)$, and columns are indexed by the exponents 
\[ y \in \Z^n(d-1) = \{ (y_1, \dots, y_n) \colon 0 \le y_i \le d-1 \}. \]
 of the monomials of our desired polynomial:
\[ L = (z^y) \in M_{(N+1)^m \times d^n}(\C_p). \]
It suffices to show that $\rank(L) < N^{m-n\epsilon}/n^m < d^n$, as then any nonzero vector in its kernel will be the coefficients of our desired polynomial. Of course by making $\epsilon$ smaller, we can ignore the constant $n^m$, since $N^\epsilon \gg n^m$ for $N$ large.

We state without proof the following elementary interpretation of the rank of a matrix.
\begin{lemma}
  Let $k$ be a field and suppose that a matrix \[ (a_{i,j}) \in M_{m \times n}(k) \] has rank equal to $r$.  Then there exist vectors \[ \beta_1, \dots, \beta_m, \gamma_1, \dots, \gamma_n \in k^r \] such that $a_{i,j} = \langle \beta_i, \gamma_j \rangle$.
\end{lemma}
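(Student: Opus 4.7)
The plan is to reduce the statement to the standard fact that a matrix of rank $r$ admits a rank factorization as a product of an $m \times r$ matrix with an $r \times n$ matrix. The vectors $\beta_i$ and $\gamma_j$ will then simply be the rows and columns of the two factors.

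In more detail, first I would choose a basis $v_1, \dots, v_r \in k^m$ for the column space of $A = (a_{i,j})$, which has dimension $r$ by the rank hypothesis. Then each column $A_{\bullet, j}$ can be written uniquely as $A_{\bullet, j} = \sum_{k=1}^r c_{k,j} v_k$ for some coefficients $c_{k,j} \in k$. Assembling these expressions, we obtain a factorization $A = UV$, where $U \in M_{m \times r}(k)$ has the vectors $v_k$ as its columns and $V \in M_{r \times n}(k)$ has entries $V_{k,j} = c_{k,j}$.

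Next I would define $\beta_i \in k^r$ to be the $i$-th row of $U$ and $\gamma_j \in k^r$ to be the $j$-th column of $V$. A direct calculation then yields
\[ a_{i,j} = (UV)_{i,j} = \sum_{k=1}^r U_{i,k} V_{k,j} = \sum_{k=1}^r (\beta_i)_k (\gamma_j)_k = \langle \beta_i, \gamma_j \rangle, \]
as desired.

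There is really no obstacle here: the entire content is the existence of a rank factorization, which is elementary linear algebra over any field. The only mild subtlety worth pointing out is that the lemma is stated over a field $k$ rather than, say, an arbitrary commutative ring, so we are free to use the existence of a basis for a finitely generated subspace; over a general ring one would need additional hypotheses, but that situation does not arise here.
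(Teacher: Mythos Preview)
Your proof is correct and is exactly the standard rank-factorization argument one would expect; the paper itself states this lemma without proof, calling it an ``elementary interpretation of the rank of a matrix,'' so your argument fills in precisely what the paper omits.
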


In our situation, we have $M = \log_p(x_{i,j}) \in M_{m \times n}(\C_p)$ with rank $r$.
We write \[ \log_p(x_{i,j}) = \langle \beta_i, \gamma_j \rangle \qquad \text{ for }  \beta_i, \gamma_j \in \C_p^r. \]
Without loss of generality, we can scale all the $\beta_i$, $\gamma_j$ to assume all their coordinates have absolute value $< p^{-1}$.  (This just scales the matrix $M$, which affects neither the assumptions nor conclusions of the theorem.) 

If $z = \prod_{i=1}^m {x_i^{\ell_i}}$ for $\ell \in \Z^m$,  then for $y \in \Z^n$ we have
\[ z^y = \exp{\left\langle \sum \beta_i \ell_i, \sum \gamma_j y_j \right\rangle}. \]

Next we will require a $p$-adic Schwarz' Lemma.  For a positive integer $d$ and real $R > 0$, define
\[ B_d(R) = \{ (z_1, \dots, z_d)\colon |z_i| \le R \text{ for all } i\} \subset \C_p^d. \]
For analytic $f \colon B_d(R) \longrightarrow \C_p$, define
\begin{equation} \label{e:max}
 |f|_{R} = \max_{z \in B_d(R)} |f(z)|. 
 \end{equation}

\begin{lemma} Suppose that $f \colon B_1(R) \longrightarrow \C_p$ is analytic and has a zero of order at least $n$ at $z=0$.  Then for any $0 < R' < R$, we have
\[ |f|_{R'} \le \left(\frac{R}{R'}\right)^{-n} |f|_R. \]
\end{lemma}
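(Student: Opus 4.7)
The plan is to exploit the non-archimedean structure of $\C_p$, where the maximum of an analytic function on a closed ball is given by the Gauss norm of its power series coefficients. Since this is a standard fact, the core of the proof becomes almost combinatorial.

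First I would expand $f$ as a convergent power series $f(z) = \sum_{k \ge 0} a_k z^k$ on $B_1(R)$. The hypothesis that $f$ has a zero of order at least $n$ at the origin translates to $a_k = 0$ for $k < n$, so the sum effectively starts at $k = n$.

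Next I would invoke the $p$-adic maximum modulus principle: for a function analytic on the closed ball $B_1(R) \subset \C_p$, one has the identity
\[ |f|_R = \sup_{k \ge 0} |a_k| R^k. \]
(The ``$\le$'' direction is the ultrametric inequality applied to the partial sums, and the ``$\ge$'' direction follows because for a generic choice of $z$ with $|z| = R$ there is no cancellation among the terms $a_k z^k$ of differing absolute value; the supremum is attained and finite by convergence on the closed ball.) Analyticity guarantees $|a_k| R^k \to 0$, so the supremum is a maximum.

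With this in hand, the bound is immediate. For any $R' < R$, using that $a_k = 0$ for $k < n$ and that $R'/R < 1$,
\begin{align*}
|f|_{R'} &= \sup_{k \ge n} |a_k| (R')^k \\
&= \sup_{k \ge n} |a_k| R^k \cdot (R'/R)^k \\
&\le (R'/R)^n \sup_{k \ge n} |a_k| R^k \\
&\le (R'/R)^n |f|_R = \left(\frac{R}{R'}\right)^{-n} |f|_R,
\end{align*}
which is the desired inequality.

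The only nontrivial ingredient is the identification $|f|_R = \sup_k |a_k| R^k$, which is where the non-archimedean nature is essential; everything else is purely formal. Since the lemma as stated concerns the one-variable case, no additional bookkeeping is needed, though I would note that the same argument extends to the multivariable setting (which is presumably what will be used in the sequel) by writing the restriction of a multivariable analytic $f$ to the line through $0$ in direction of a given point, reducing to the one-variable statement above.
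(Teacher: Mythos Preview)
Your proof is correct. Both your argument and the paper's rest on the $p$-adic maximum modulus principle, but you deploy it differently: you use the Gauss-norm identity $|f|_R = \sup_k |a_k| R^k$ and bound coefficient-by-coefficient, whereas the paper follows the classical Schwarz-lemma template by setting $g(z) = f(z)/z^n$, bounding $|f(z)| \le (R')^n |g(z)| \le (R')^n |g|_R$, and then invoking the maximum modulus principle in the form ``the maximum is attained on the boundary $|z|=R$'' to get $|g|_R = R^{-n}|f|_R$. Your route is arguably more transparent in the non-archimedean setting, since the Gauss norm makes the role of the vanishing coefficients explicit and avoids introducing an auxiliary function; the paper's route has the virtue of paralleling the archimedean proof (and the same device reappears in the proof of Laurent's interpolation-determinant theorem that follows). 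Either way the only substantive input is the maximum modulus principle, and both arguments are standard.
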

\begin{proof}
Let $g(z) = f(z)/z^n$.  This is analytic on $B_1(R)$ since $f$ has a zero of order at least $n$ at $z=0$. 
For any $z \in B_1(R')$, we have
\begin{align*}
 |f(z)| & \le (R')^n|g(z)|  \\
& \le (R')^n|g|_R \\
& = \left(\frac{R'}{R}\right)^n|f|_R. 
\end{align*}
The last equality uses the $p$-adic maximal modulus principle, which states that the maximum in (\ref{e:max}) is achieved on the boundary $|z|= R$.  See \cite{cherry}*{Theorem 1.4.1} or \cite{stansifer}*{Theorem 7}  for a proof of this analytic fact.
\end{proof}

Now we present Laurent's main theorem on interpolation determinants.

\begin{theorem}[Laurent]  Let $0 < R' < R$ and let $f_1, \dots, f_d$ be analytic functions 
\[ B_r(R)  \longrightarrow \C_p. \]  Let $z_1, \dots, z_d \in B_r(R')$.
Then $L = \det(f_j(z_i))$ satisfies
\[ |L| \le \left(\frac{R}{R'}\right)^{-\Theta_r(d)} \prod_{i=1}^{d} |f_i|_R,\]
where for $d$ sufficiently large relative to $r$,
\begin{equation} \label{e:trd}
 \Theta_r(d) > \frac{r}{6e}d^{(r+1)/r}. \end{equation}
\end{theorem}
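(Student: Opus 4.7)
The plan is to package $L = \det(f_j(z_i))$ as the value at $\zeta = 1$ of the one-variable analytic function
\[ F(\zeta) := \det(f_j(\zeta z_i))_{i,j=1,\dots,d}, \]
which is analytic on the disk $B_1(R/R')$ since $|\zeta z_i| \le (R/R') R' = R$ whenever $|\zeta| \le R/R'$ and $|z_i| \le R'$. I will show that $F$ vanishes to order at least $\Theta_r(d)$ at $\zeta = 0$, then apply the one-variable $p$-adic Schwarz lemma (proved just above) with outer radius $R/R'$ and inner radius $1$ to obtain a bound on $|F(1)| = |L|$.

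To produce the high-order zero at $\zeta = 0$, expand each $f_j$ into homogeneous components $f_j(w) = \sum_{k \ge 0} Q_{j,k}(w)$, with $Q_{j,k}$ homogeneous of degree $k$. Multilinearity of the determinant then gives the formal expansion
\[ F(\zeta) = \sum_{(k_1, \dots, k_d)} \zeta^{k_1 + \cdots + k_d} \det(Q_{j, k_i}(z_i))_{i,j}. \]
The key vanishing claim is that $\det(Q_{j, k_i}(z_i))_{i,j} = 0$ unless the tuple $(k_i)$ is \emph{admissible}, meaning each value $k$ occurs at most $s_k := \binom{k+r-1}{r-1}$ times. Indeed, if value $k$ appears in rows $i_1, \dots, i_m$ with $m > s_k$, each such row has the form $(Q_{j,k}(z_{i_\ell}))_j$, which lies in the image of the linear map from the dual of the degree-$k$ homogeneous polynomials to $\C_p^d$ sending $\phi \mapsto (\phi(Q_{j,k}))_j$. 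Since the source has dimension $s_k$, so does the image, and therefore those $m > s_k$ rows must be linearly dependent.

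Consequently $\ord_{\zeta=0} F \ge \Theta_r(d) := \min\{k_1 + \cdots + k_d : (k_1,\dots,k_d) \text{ admissible of length } d\}$. The most delicate step is establishing the explicit bound $\Theta_r(d) > \frac{r}{6e} d^{(r+1)/r}$: a greedy construction (fill with value $0$ once, value $1$ up to $r$ times, value $2$ up to $s_2$ times, and so on) combined with the identity $k s_k = r \binom{k+r-1}{r}$ and the hockey-stick formula gives $\Theta_r(d) \ge r \binom{K+r-1}{r+1}$, where $K$ is the smallest integer with $\binom{K+r}{r} \ge d$. Estimating $K \ge (d r!)^{1/r} - r$ and invoking Stirling's inequality $(r!)^{1/r} \ge r/e$ then yields the stated bound once $d$ is sufficiently large relative to $r$.

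Finally, the one-variable Schwarz lemma gives $|F(1)| \le (R/R')^{-\Theta_r(d)} |F|_{R/R'}$. For the outer bound, the ultrametric Leibniz estimate $|\det M| \le \max_\sigma \prod_i |M_{i, \sigma(i)}|$ over $\C_p$, together with the fact that $|\zeta z_i| \le R$ for $\zeta \in B_1(R/R')$ forces $|f_{\sigma(i)}(\zeta z_i)| \le |f_{\sigma(i)}|_R$, yields $|F|_{R/R'} \le \prod_j |f_j|_R$. Combining the two inequalities completes the proof.
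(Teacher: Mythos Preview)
Your proof is correct and follows the same overall architecture as the paper: define the one-variable function $F(\zeta)=\det(f_j(\zeta z_i))$, establish that it vanishes to order at least $\Theta_r(d)$ at $\zeta=0$ via multilinearity, then apply the $p$-adic Schwarz lemma together with the ultrametric bound $|\det M|\le \max_\sigma \prod_i |M_{i,\sigma(i)}|$ on $|F|_{R/R'}$.

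The one place where you diverge is in the vanishing argument. The paper expands the determinant by \emph{columns}, writing each $f_j$ as a power series in monomials $u^{v}$; multilinearity then reduces to the case $f_j(u)=u^{v_j}$, where two equal $v_j$'s give two equal columns and hence a zero determinant, so the surviving terms have $v_1,\dots,v_d$ distinct and contribute $\zeta^{\sum \|v_j\|}$. You instead expand by \emph{rows}, grouping only by homogeneous degree $k_i$, and use a dimension count (row $i$ lies in the image of $V_k^*\to\C_p^d$, $\phi\mapsto(\phi(Q_{j,k}))_j$, which has rank $\le s_k$) to kill any tuple in which some degree $k$ occurs more than $s_k=\binom{k+r-1}{r-1}$ times. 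These two routes yield the same combinatorial quantity $\Theta_r(d)$, since a multiset of degrees is admissible in your sense exactly when it can be realized as $\{\|v_1\|,\dots,\|v_d\|\}$ for distinct $v_j\in(\Z_{\ge 0})^r$. Your version has the mild advantage of not needing to descend to individual monomials; the paper's has the advantage of making the definition of $\Theta_r(d)$ (minimum of $\sum\|v_j\|$ over distinct $r$-tuples) immediate. You also supply a sketch of the inequality $\Theta_r(d)>\frac{r}{6e}d^{(r+1)/r}$ via the greedy filling and the identity $k\,s_k=r\binom{k+r-1}{r}$, whereas the paper simply cites it.
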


\begin{proof}
Define $\Delta(z)=\det(f_j(z_i z))$, which is analytic on $|z| \le R/R'$. We will show that $\Delta(z)$ has a zero of order at least $\Theta_r(d)$ at $z=0$, for some combinatorial function $\Theta_r$ satisfying (\ref{e:trd}) that we will define in a moment.  The result then follows from Schwarz' Lemma:
\[ |L| = |\Delta(1)| \le \left(\frac{R}{R'} \right)^{-\Theta_r(d)} |\Delta|_{R/R'} \]
using the trivial upper bound
\[ |\Delta|_{R/R'} \le \prod_{i=1}^{d} |f_i|_R. \]
(Note that in the complex case, we would need a factor of $d!$ on the right, but in the nonarchimedean setting, this factor is not required because of the strong triangle inequality.)

Write each $f_i$ as a power series in the variables $u_1, \dots, u_r \in \C_p$.  By multilinearity of the determinant, it suffices to consider the case $f_j(u)= u^{v_j} = u_1^{v_{1j}}u_2^{v_{2j}} \cdots u_r^{v_{rj}}$ 
for nonnegative integers $v_{ij}$.  Then \[ \Delta(z) = z^{\sum_j || v_j ||} \det(z_i^{v_j}), \]
where $|| v_j || = v_{j1} + v_{j2} + \cdots + v_{jr}$.
If any two tuples $v_j$ are equal, this determinant vanishes and $\Delta(z)$ is identically 0.  
If not, then the order of vanishing is at least 
\begin{align*}
 \Theta_r(d) := & \text{ the minimum of } \sum_{j=1}^d || v_j || \text{ as } v_j \text{ ranges} \\
&  \text{ over all distinct tuples of
elements of } (\Z^{\ge 0})^r. 
\end{align*}
For example, $\Theta_1(d) = d(d-1)/2.$
For a proof of the combinatorial inequality (\ref{e:trd}) see \cite{lil}*{Lemma 4.3}.
\end{proof}

We can now apply Laurent's theorem to complete the proof of Waldschmidt's theorem.
We want to show that any square submatrix $L' = (z_i^y)$ of $L$ of dimension $d^n \approx N^{m-n\epsilon}$ has vanishing determinant, 
where \[ z_1, \dots, z_d \in X(N), \quad y \in \Z^n(d-1), \quad d = \lfloor N^{m/n-\epsilon} \rfloor . \]

As explained earlier, the entries of the matrix $L'$ can be written
$\exp( \langle \sum \beta_i \ell_i, \sum \gamma_i y_i \rangle)$
with $\ell \in \Z^m(N)$ corresponding to $z$.
For each $y$ we have the function 
\[ f_{y}(u_1, \dots, u_r) = \exp(\langle u, \sum \gamma_i y_i \rangle) \]
We apply Laurent's theorem on interpolation determinants with $R = 1$ and $R' = 1/p$.  We find 
\[ |L'| \le C^{-N^{(m-n\epsilon)(r+1)/r}} \]
where $C > 1$ is a constant.

Now we want to put a bound on the archimedean absolute value of $L'$.
Let \[ A = \max_{i,j} |x_{i,j}|_\infty. \]  Then $|z^y|_\infty \le A^{N \cdot N^{(m/n) - \epsilon} n}$. 
Therefore
\[ |L'|_\infty \le (N^{m-n\epsilon})! \cdot D^{N^{(m/n) + 1 + m - (n+1)\epsilon}}. \]
The factorial is dominated by the other term and can be ignored.  Scaling to obtain integrality just scales $D$. The same is true for taking norm from the field generated by the $x_{i,j}$ down to $\Q$ in order to obtain an element of $\Z$.  

Therefore we will have $L' =0$ if
\[ C^{N^{(m-n\epsilon)(r+1)/r}} > D^{N^{(m/n)+1+m-(n+1)\epsilon}}. \]
Of course, for this inequality to hold for large $N$, the precise values of $C$ and $D$ do not matter; all that matters is that we have the corresponding inequality of exponents.

It therefore suffices to have
\[ (m-n\epsilon) \frac{r+1}{r} > \frac{m}{n} + 1 + m - (n+1)\epsilon. \]
This simplifies to 
\[ \frac{1}{r} > \frac{m+n}{mn} + \epsilon\left(\frac{n-r}{r} \right). \]
There exists $\epsilon > 0$ satisfying this inequality if and only if
\[ r < \frac{mn}{m+n}. \]
This gives the desired vanishing of $\det(L')$ and completes the second proof of Waldschmidt's theorem.

\subsection{Masser's Theorem}

We conclude this section by proving Masser's Theorem, stated in Theorem~\ref{t:m} above.  This is a purely algebro-geometric statement that does not involve the logarithm or exponential functions.  In particular we work over an arbitrary field $k$ of characteristic 0.
Recall the notation established in \S\ref{s:aux}.
 We let the group $X \subset (k^*)^n$ act on the polynomial ring $R = k[t_1, \dots, t_n]$ by \[ z\cdot f = f(z_1 t_1, z_2 t_2, \dots, z_nt_n). \]
 Recall that the subgroup $X$ is generated by elements $x_1, \dots, x_m$.
If $a \in \Z^m$, we write $x^a = \prod_{i=1}^{m} x_i^{a_i} \in X$. 
For a prime ideal $\fp \subset R$, let 
\[ \Stab_X(\fp) = \left\{a \in \Z^m \colon x^a \cdot \fp = \fp\right\}. \]

Before delving into the proof, it is instructive to consider the simplest case, $n = m = 2$.
 We let $N > 0$ and suppose there exists $P \in k[t_1, t_2]$ such that $\deg(P) < N$ and $P(x)=0$ for all $x \in X(2N)$.
 We want to show that either:
 \begin{itemize}
 \item[(A)] there is a nonzero $a \in \Z^2$ such that $x^a = (1,1)$, (this corresponds to $m' = 1, n' = 2$) or  
 \item[(B)] there exists a nonzero $b \in \Z^2$ such that $z^b = z_1^{b_1}z_2^{b_2} = 1$ for all $z \in X$ (this corresponds to $m'=2, n' = 1$).
 \end{itemize}
 
We factor $P$ into a product $\prod P_i$ of irreducibles.  We can assume that none of the $P_i$ are monomials, since monomials have no zeroes in $(k^*)^2$.
We will first show that if any $P_i$ satisfies $\rank(\Stab_X((P_i))) = 2$, then we are in the second case above.  This follows from Lemma~\ref{l:height} below, but it is relatively easy to see in this case explicitly. Indeed, if $t_1^{a_1} t_2^{a_2}$ is a monomial occuring in $P_i$, then the  equation $z P_i = \lambda P_i $ for $z \in X$ and $\lambda \in k^*$ yields
\[  z_1^{a_1}z_2^{a_2} = \lambda. \]
Letting $t_1^{a_1'} t_2^{a_2'}$ be some other monomial occuring in $P_i$ (recall we may assume that $P_i$ is not a monomial) we get a similar equation; dividing these two cancels $\lambda$  so we obtain
\[ z_1^{b_1}z_2^{b_2} = 1, \]
where  $b_i = a_i - a_i'$ for $i=1,2$ are not both zero.  If $\rank(\Stab_X((P_i))) = 2$ then this holds for all $z$ in a finite index subgroup of $X$, so replacing $(b_1,b_2)$ by an appropriate multiple, we are in case (B).

Therefore, we are left to consider the case where  each irreducible factor $P_i$ of $P$ satisfies $\rank(\Stab_X((P_i))) \le 1.$
In this case, we will show that there is a polynomial of the form \[ Q = \sum_{i=1}^k a_i (z_i \cdot P), \] where $a_i \in \Z$ and $z_i \in X(N)$, such that $P$ and $Q$ are relatively prime.
Let us first explain why this completes the proof.  Since $P$ vanishes on $X(2N)$, each polynomial $z \cdot P$ with $z \in X(N)$ vanishes on $X(N)$, hence the polynomial $Q$ vanishes on $X(N)$. Therefore both $P$ and $Q$ vanish on $X(N)$.  The set $X(N)$ has size $(N+1)^2$ {\em unless} we are in case (A) above.  But $\deg Q \le \deg P < N$, and the polynomials are coprime, so we would obtain a contradiction to Bezout's theorem if these polynomials had $(N+1)^2$ common zeroes.  We must therefore be in case (A).

 To see the existence of the polynomial $Q$, we first show that for each irreducible polynomial $P_i$, there exists $z_i$ 
 such that $z_i^{-1} \cdot P_i$ does not divide $P$, equivalently, $P_i$ does not divide $z_i \cdot P$.  This is established by counting.  Since 
 $\rank_X((P_i)) \le 1$, there are at least $N+1$ distinct ideals among the set $(z^{-1} \cdot P_i)$ as $z$ ranges over $X(N)$.  See Lemma~\ref{l:images} below for a proof.  But $P$ has degree  less than $N$, which is a bound on the number of irreducible factors, so some $z^{-1}\cdot P_i$ must not be a factor of $P$.  With these $z_i$ in hand, the existence of the linear combination $Q$ is an easy inductive argument using the Pigeonhole Principle; see Lemma~\ref{l:lincomb} below.
 
We now return to the general case.  Recall that the {\em height} $\height(\fp)$ of a prime ideal $\fp$ is the largest integer $r$ such that there exists a chain of distinct prime ideals
\[ \fp_0 \subset \fp_1 \subset \cdots \subset \fp_r = \fp. \]

\begin{lemma} \label{l:height}
Let $\fm = (t_1 -1, \dots, t_n - 1)$.  Let $\fp \subset \fm$ be a prime of height $n'$ and let $A = \Stab_X(\fp)$.  There exists a subgroup $B \subset \Z^n$ of rank $\ge n'$ such that 
$\langle A, B \rangle_X = 1$.
\end{lemma}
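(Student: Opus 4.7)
The plan is to translate the algebraic condition $a \in \Stab_X(\fp)$ into the geometric statement that $x^a$ lies on $V(\fp)$, and then to obtain $B$ as the character-lattice annihilator of the Zariski closure of $\{x^a : a \in A\}$ inside the ambient torus. The chief delicate point is tracking the sign convention in the $X$-action to identify $z \cdot \fm$ correctly; the rest is a dimension count combined with standard torus theory.

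First I would show that $x^a \in V(\fp)$ for every $a \in A$. Since the $X$-action on $R$ is pullback along coordinatewise multiplication, a direct computation gives, for any $z \in X$,
\[ z \cdot \fm = (z_1 t_1 - 1, \dotsc, z_n t_n - 1) = (t_1 - z_1^{-1}, \dotsc, t_n - z_n^{-1}) = \fm_{z^{-1}}, \]
the maximal ideal at the point $z^{-1} \in (k^*)^n$. Applying this with $z = x^a$ and using $\fp \subset \fm$ yields $\fp = x^a \cdot \fp \subset x^a \cdot \fm = \fm_{(x^a)^{-1}}$, so $(x^a)^{-1} \in V(\fp)$; replacing $a$ by $-a \in A$ gives $x^a \in V(\fp)$ as well. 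Hence $H := \{x^a : a \in A\}$ is a subgroup of $(k^*)^n$ contained in $V(\fp)$.

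Next I would base-change to an algebraic closure $\overline{k}$ and take the Zariski closure $\overline{H}$ of $H$ in $(\overline{k}^*)^n$. Continuity of multiplication and inversion implies $\overline{H}$ is a closed subgroup of the torus, hence an algebraic subgroup. Because $\overline{H} \subset V(\fp)_{\overline{k}}$, we have $d := \dim \overline{H} \le n - n'$. The standard structure theory of algebraic subgroups of tori in characteristic zero (identity component a subtorus, quotient finite) gives that the character group $X^\ast(\overline{H})$ has rank $d$. Therefore the restriction homomorphism $\Z^n \to X^\ast(\overline{H})$, $b \mapsto (z \mapsto z^b)|_{\overline{H}}$, has image of rank at most $d$ and kernel $B$ of rank at least $n - d \ge n'$. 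For any $a \in A$ and $b \in B$ we have $x^a \in \overline{H}$ and the character $z^b$ restricts to $1$ on $\overline{H}$, so $\langle a, b \rangle_X = (x^a)^b = \prod_{i,j} x_{i,j}^{a_i b_j} = 1$, as required.
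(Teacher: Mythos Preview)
Your proof is correct and takes a genuinely different route from the paper's. Both arguments end up with the same subgroup $B = \{b \in \Z^n : \langle A, b \rangle_X = 1\}$ (since a character is trivial on $H = \{x^a : a \in A\}$ iff it is trivial on the Zariski closure $\overline{H}$), but they bound $\rank B$ differently. The paper argues by hand: it chooses a complement $Z$ to $\Q B$ in $\Q^n$ and, assuming $s = \rank Z > n - n'$, uses $\trd_k \Frac(R'/\fp R') = n - n'$ to produce a nonzero polynomial relation $Q(u_1,\dots,u_s) \in \fp R'$ among the Laurent monomials $u_i = t^{z_i}$. Acting by $x^{da}$ for $d = 0,1,2,\dots$ and applying the Vandermonde trick yields, for each $a \in A$, some nonzero $w \in Z$ of bounded size with $\langle a, w \rangle_X = 1$; the fact that a free abelian group is not a finite union of proper subgroups then forces a single such $w$ to work for all of $A$, contradicting $w \notin B$. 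Your approach replaces this whole chain with a dimension count in the torus: $H \subset V(\fp)$ forces $\dim \overline{H} \le n - n'$, and since a closed subgroup of $\G_m^n$ in characteristic zero is diagonalizable with character group of rank equal to its dimension, the kernel of $\Z^n \to X^*(\overline{H})$ has rank $\ge n'$. Your argument is shorter and more conceptual, at the price of importing the structure theory of algebraic subgroups of tori; the paper's argument is completely elementary and self-contained, in keeping with its expository aims.
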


\begin{proof}
Let $B = \{y \in \Z^n \colon \langle A, y \rangle_X = 1 \}.$  Choose $Z \subset \Z^n$ such that \[ \Q^n = \Q B \oplus \Q Z.\] 
 We want to show that $s := \rank(Z) \le n - n'$.  Let $z_1, \dots, z_s$ be a basis for $Z$.  Write $z_i = (z_{i,1}, \dots, z_{i,n})$. 

For $i=1, \dots, s$, let $u_i = \prod_{j=1}^n t_j^{z_{i,j}} \in R' = k[t_1^{\pm 1}, \dots, t_n^{\pm 1}]$.  Since \[ \trd_k \Frac(R'/\fp R') = n - n', \] if $s > n-n'$ then there exists a nonzero polynomial $Q$ with coefficients in $k$ such that $Q(u_1, \dots, u_s) \in \fp R'$.
Suppose this is the case, and write $Q(u_1, \dots, u_s)$ as polynomial $Q'(t_1, \dots, t_n) \in \fp R'$.

For any $a \in A$, we have $x^a \cdot Q' \in \fp R'$, so 
\begin{align*}
 Q'(x^a t) \in \fp R' \subset \fm R' & \Longrightarrow Q'(x^a) = 0 \\
& \Longrightarrow Q(\langle a, z_1 \rangle_X, \dots, \langle a, z_s \rangle_X) = 0. 
\end{align*}
Fix $a$ and apply this with $a$ replaced by $da$, as $d=0, 1, \dots$.  Using the Vandermonde trick from Baker's theorem, we find that some $\Z$-linear combination of the $z_i$ is orthogonal to $a$.  More precisely, we have
$ \langle a, w \rangle_X = 1 $
for some
\[ w \in S = \left\{ \sum_{i=1}^s w_i z_i \neq 0, \ |w_i| \le \deg(Q) \right\}. \]

 Therefore
\[ A = \bigcup_{w \in S} w^\perp. \]
But $A$ is a finitely generated free abelian group and cannot be written as a finite union of proper subgroups. Therefore there exists $w \in S$ such that $\langle A, w \rangle = 1$.  But then $w \in B$, contradicting $w \in Z$.
 Therefore $s \le n-n'$ as desired. 
\end{proof}

Given Lemma~\ref{l:height}, our task now is to show the existence of a prime ideal $\fp$ with height $n'$ such that $\rank(\Stab_X(\fp)) = m'$ where $m'/m + n'/n > 1$.  This is provided by the following theorem.

\begin{theorem} \label{t:Pexists}
 Let $N > 0$ and suppose there exists \[ P \in k[t_1, \dots, t_n] \] such that $\deg(P) < (N/n)^{m/n}$ and $P(x) = 0$ for all $x \in X(N)$.  Then there exists a prime ideal $\fp \subset \fm$ of height $n'$ such that 
\[ \rank(\Stab_X(\fp)) = m' \quad \text{ where } \quad m'/m + n'/n > 1. \]
\end{theorem}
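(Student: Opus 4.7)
My plan is to argue by contradiction, following the two-dimensional template laid out immediately before the statement. Assume no such prime $\fp$ exists, so that every prime $\fp \subset \fm$ satisfies $\rank(\Stab_X(\fp))/m + \height(\fp)/n \le 1$. Applied to $\fp = \fm$ itself (height $n$), this forces $\rank(\Stab_X(\fm)) = 0$, so the exponent map $\Z^m \to X$, $a \mapsto x^a$, is injective and in particular $|X(N')| = (N'+1)^m$ for every $N' \ge 0$.

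The heart of the argument is to construct, inductively in $h = 1, \dots, n$, a regular sequence $P_1, \dots, P_n$ in $R = k[t_1, \dots, t_n]$ with each $P_h$ a $\Z$-linear combination of translates $z \cdot P$ for $z \in X(N')$, where $N' = \lfloor N/(2n) \rfloor$. Each such $P_h$ vanishes automatically on $X(N-N')$: if $y \in X(N-N')$ and $z \in X(N')$, then $zy \in X(N)$, so $(z \cdot P)(y) = P(zy) = 0$. Given $P_1, \dots, P_h$ with $h < n$, the ideal $(P_1, \dots, P_h)$ has finitely many minimal primes $\fq_1, \dots, \fq_k$, each of height exactly $h$ by Krull's Hauptidealsatz, with $k \le (\deg P)^h$ by B\'ezout. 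Because $R$ is Cohen--Macaulay, extending the regular sequence amounts to finding a new polynomial lying in no $\fq_j$. I accomplish this by finding, for each $j$ separately, some $z_j \in X(N')$ with $z_j \cdot P \notin \fq_j$, and then amalgamating the $z_j \cdot P$ into a single $\Z$-linear combination $P_{h+1}$ that avoids every $\fq_j$ simultaneously, via a generalization of Lemma~\ref{l:lincomb}.

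The existence of the individual $z_j$ is where the contradiction hypothesis is invoked. If every translate $z \cdot P$ with $z \in X(N')$ lay in $\fq_j$, then $P$ would lie in every prime of the $X(N')$-orbit $\{z^{-1} \cdot \fq_j\}$. By a generalization of Lemma~\ref{l:images} to higher-rank stabilizers, and using $\rank(\Stab_X(\fq_j)) \le m(n-h)/n$, this orbit has at least $c \cdot (N')^{mh/n}$ distinct primes (for some constant $c$ depending on $\fq_j$). Having $P$ lie in all of them forces $V(P)$ to contain that many distinct codimension-$h$ translates of $V(\fq_j)$, which I will show contradicts the degree bound $\deg P < (N/n)^{m/n}$ for an appropriate choice of $N'$. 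Granting the regular sequence $P_1, \dots, P_n$, the final contradiction comes from B\'ezout: the common zero set $V(P_1, \dots, P_n) \subset \A^n$ is zero-dimensional with at most $(\deg P)^n < (N/n)^m$ points, yet contains $X(N-N') \supset X(N/2)$ with $(N/2+1)^m > (N/n)^m$ elements (for $n \ge 3$; minor numerical adjustments handle $n \le 2$).

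The main obstacle will be the geometric step embedded in the induction: ruling out that $P$ lies in every prime of a single $X(N')$-orbit of height-$h$ primes. For $h = 1$, this reduces to the elementary bound that the number of irreducible factors of $P$ is at most $\deg P$, exactly as in the two-dimensional case. For $h \ge 2$, one must work harder, since a hypersurface of degree $d$ can contain infinitely many codimension-$h$ subvarieties. I expect to exploit the crucial structural fact that the subvarieties in question are all $X$-translates of a single $V(\fq_j)$, so their union coincides with the $X$-orbit of $V(\fq_j)$, whose Zariski closure in $\A^n$ has dimension controlled by $\rank X - \rank(\Stab_X(\fq_j))$. The hypothesis $\rank(\Stab_X(\fq_j)) \le m(n-h)/n$ should yield a sufficient lower bound on this dimension to contradict containment in a hypersurface of degree less than $(N/n)^{m/n}$.
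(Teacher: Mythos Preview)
Your overall architecture matches the paper's: build a regular sequence $P_1,\dots,P_n$ out of translates of $P$, show each $P_h$ vanishes on a large $X(N-N')$, and finish by B\'ezout versus the count $|X(N-N')|$. The paper packages this as Theorem~\ref{t:deta}. But the heart of the induction---finding an element avoiding every associated prime $\fq_j$ of $(P_1,\dots,P_h)$---is where your proposal has a genuine gap.

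You propose to find, for each height-$h$ prime $\fq_j$, a single translate $z_j\cdot P$ with $z_j\cdot P\notin\fq_j$, arguing by contradiction that otherwise the hypersurface $V(P)$ contains too many $X(N')$-translates of $V(\fq_j)$. As you yourself flag, for $h\ge 2$ a hypersurface can contain infinitely many codimension-$h$ subvarieties, so a raw count gives nothing. Your fallback---bounding the dimension of the orbit closure $\overline{X\cdot V(\fq_j)}$---does not yield a contradiction either: that closure is a fixed variety independent of $N'$, and a hypersurface of any positive degree may contain a fixed subvariety of dimension $\le n-1$. The degree bound on $P$ simply does not enter. So the inductive step stalls at $h=2$.

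The paper's fix is to change \emph{what} is being counted. Rather than asking whether $P$ lies in many translates of $\fq_j$, one compares the size of the $X(N')$-orbit of $\fq_j$ (which by Lemma~\ref{l:images} and the stabilizer bound exceeds $D^{h}$, where $D=\deg P$) to the number of associated primes of the already-constructed ideal $\fa_h^*=(P_1,\dots,P_h)^*$ (which is at most its degree $\le D^{h}$). Hence some translate $x^{-a}\fq_j$ is \emph{not} associated to $\fa_h^*$; equivalently $\fq_j$ is not associated to $x^a\fa_h^*$. Since both are unmixed of the same height $h$, this forces $x^a\fa_h^*\not\subset\fq_j$, so $x^aP_i\notin\fq_j$ for some $i\le h$. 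The crucial point is that you must allow translates of the previously built $P_1,\dots,P_h$ (which already cut out something of the right height $h$), not just translates of the height-one $P$. Once you have one such element per $\fq_j$, Lemma~\ref{l:lincomb} combines them into $P_{h+1}$ as you intended.

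Two smaller issues: first, the minimal primes of $(P_1,\dots,P_h)$ need not lie in any $z\cdot\fm$, so the stabilizer-rank hypothesis does not apply to them directly; the paper handles this by localizing at $S_{\fM}$ (the $*$ operation) before taking associated primes. Second, a single step size $N'=\lfloor N/(2n)\rfloor$ is too crude: the orbit-size threshold needed at step $h$ is $D^{h}$, which depends on $h$, so the paper uses graduated steps $D^{\eta_1},\dots,D^{\eta_n}$ and verifies that their sum stays below $N$.
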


Lemma~\ref{l:height} and Theorem~\ref{t:Pexists} combine to give Theorem~\ref{t:m}.
 We will prove the contrapositive of Theorem~\ref{t:Pexists}.
For each $1 \le n' \le n$, let $m' = m'_{n'}$ be the maximal rank of $\Stab_X(\fp)$ as $\fp$ ranges over the primes contained in $\fm$ with height equal to $n'$.  If any $m' = m$ then $m'/m + n'/n = 1 + n'/n > 1$, so we are done.  Therefore assume that every $m' < m$ and define
 \[ \eta_{n'} = \frac{n'}{m - m'}. \]
Note that 
\begin{equation} \label{e:nprime}
\eta_{n'} > n/m \Longleftrightarrow m'/m + n'/n > 1.
\end{equation}
Theorem~\ref{t:Pexists} will arise as a corollary of the following statement.
\begin{theorem} \label{t:deta}
 Let $f \in R$ have degree $D$ and let 
\[ N = D^{\eta_1} + D^{\eta_2} + \cdots + D^{\eta_n}. \]
There exists $z \in X(N)$ such that $f(z) \neq 0$. 
\end{theorem}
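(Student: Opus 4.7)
I will argue by contradiction. Assume that $f(z)=0$ for every $z\in X(N)$, and build, by induction on $i = 0,1,\dotsc,n-1$, a chain of ideals
\[ I_0 \subsetneq I_1 \subsetneq \cdots \subsetneq I_{n-1} \subset \fm \]
with the following invariants at stage $i$: the ideal $I_i$ has height exactly $i+1$, is generated by polynomials of degree at most $D$, and every element of $X(N_i)$ lies in $V(I_i)$, where $N_i := D^{\eta_{i+1}} + \cdots + D^{\eta_n}$. The base case sets $I_0 := (f)$, which is contained in $\fm$ because $(1,\dots,1) \in X(N)$ forces $f \in \fm$.

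The heart of the argument is the inductive step. Given $I_i$ of height $i+1$, choose a minimal prime $\fp$ of $I_i$ with $\fp \subset \fm$ (such a $\fp$ exists because $\fm \supset I_i$). By definition of $m'_{i+1}$ we have $\rank(\Stab_X(\fp)) \le m'_{i+1}$. An orbit-counting estimate in the spirit of Lemma~\ref{l:images} then shows that the family
\[ \{z\cdot\fp : z \in X(D^{\eta_{i+1}})\} \]
contains at least $D^{\eta_{i+1}(m - m'_{i+1})} = D^{i+1}$ distinct ideals, because $\eta_{i+1} = (i+1)/(m-m'_{i+1})$ was set up precisely for this exponent. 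On the other hand, a Bezout-type bound controls the number of minimal primes of $I_i$ by something of order $D^{i+1}$; by arranging constants carefully the orbit count strictly dominates, so I can select $z\in X(D^{\eta_{i+1}})$ with $z\cdot\fp \notin \{\text{minimal primes of } I_i\}$. Taking any $g \in I_i$ whose translate $z\cdot g$ lies outside $\fp$ but remains in the contracted ideal, I set $I_{i+1} := I_i + (z\cdot g)$. By Krull's Hauptidealsatz the height increases by exactly one, and since $z\in X(D^{\eta_{i+1}})$ sends $X(N_i)$ into the union of fibers on which $I_i$ still vanishes, $I_{i+1}$ continues to vanish on $X(N_i - D^{\eta_{i+1}}) = X(N_{i+1})$.

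After $n-1$ iterations, $I_{n-1}$ has height $n$, so its zero locus in $(k^*)^n$ is zero-dimensional of length at most roughly $D^n$ by Bezout. Yet $X(N_{n-1}) = X(D^{\eta_n})$ is contained in this locus, and the orbit estimate bounds its size below by $(D^{\eta_n})^{m-m'_n} = D^n$. A one final pigeonhole step — or an explicit tightening of the Bezout constant — then delivers the contradiction required to conclude that some $z\in X(N)$ satisfies $f(z)\ne 0$.

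\textbf{Main obstacle.} The delicate point is numerical: the orbit lower bound $D^{i+1}$ and the Bezout upper bound on the number of minimal primes are of the same order at each stage, so the argument only succeeds if one extracts a strict inequality from auxiliary slack (e.g.\ a factor of $n$ hidden in the statement, via $\deg P < (N/n)^{m/n}$). A secondary subtlety is ensuring that the minimal primes one tracks at each step genuinely lie in $\fm$ — since the hypothesis $\rank(\Stab_X(\fp)) \le m'_{\height(\fp)}$ is only supplied for such primes — which requires some care when adjoining the translate $z\cdot g$ to avoid introducing primes that escape $\fm$.
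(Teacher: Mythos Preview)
Your overall architecture---building an increasing chain of ideals whose height rises by one at each step while the set of translates on which the ideal vanishes shrinks by a controlled amount---is exactly the paper's strategy. However, there is a genuine gap in the inductive step.

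You choose a \emph{single} minimal prime $\fp$ of $I_i$ and find a translate $z\cdot g$ lying outside $\fp$. But Krull's Hauptidealsatz does not then guarantee that $\height(I_i + (z\cdot g)) = i+2$: if $\fp'$ is another minimal prime of $I_i$ and $z\cdot g \in \fp'$, then $\fp'$ remains a minimal prime of $I_{i+1}$ of height $i+1$, and the height does not increase. The paper addresses this by running the orbit-counting argument for \emph{every} associated prime $\fp_1,\dots,\fp_s$ of $\fa_{r-1}^*$, producing for each $\fp_j$ a translate $x^{a_j} f_{i_j}$ not in $\fp_j$, and then taking an integer linear combination $f_r$ of these translates that simultaneously avoids all the $\fp_j$ (Lemma~\ref{l:lincomb}, a pigeonhole argument). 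Only then does Lemma~\ref{l:step4} force the height up by one.

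The ``secondary subtlety'' you flag---primes escaping $\fm$---is more central than you suggest. The paper handles it by passing to the saturation $\fa_r^* = (S_{\fM}^{-1}\fa_r)\cap R$, which discards all associated primes not lying in some $z\cdot\fm$ for $z\in X(N)$; without this the unmixedness claim and the degree bound $\ell\le D^r$ are not available. Conversely, the point you call the ``main obstacle'' is actually not a problem: Lemma~\ref{l:images} gives the orbit count a strict lower bound $(\lfloor D^{\eta_{r-1}}\rfloor+1)^{m-m'_{r-1}} > D^{r-1}$, while the number of associated primes is $\le \ell(\fa_{r-1}^*)\le D^{r-1}$, so the strict inequality is built in and no extra slack from the factor of $n$ is needed here.
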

Theorem~\ref{t:deta} implies Theorem~\ref{t:Pexists}.  Indeed, if each $\eta_{n'}$ for $1 \le n' \le n$ satisfies 
$\eta_{n'} \le n/m$, then Theorem~\ref{t:deta} implies that there exists $z \in X(n \deg(P)^{n/m})$ such that $P(z) \neq 0$.  But by assumption $n \deg(P)^{n/m} < N$, yielding a contradiction to the assumption $P(z) = 0$ for all $z \in X(N)$.  Therefore some $\eta_{n'}$ is larger than $n/m$, giving the desired result by (\ref{e:nprime}).

\bigskip

The proof of Theorem~\ref{t:deta} requires significant commutative algebra.  We first establish some notation.
Let 
 \[ \fM = \bigcup_{z \in X(N)} z\cdot \fm, \quad S_{\fM} = R - \fM. \]
The set $S_{\fM}$ is multiplicatively closed.  For an ideal $\fa \subset R$, define 
\[ \fa^* = (S_\fM^{-1} \fa) \cap R \supset \fa. \]
Note that for a prime ideal $\fp \subset R$, we have $\fp^* = \fp \Leftrightarrow \fp \subset z \cdot \fm$ for some $z \in X$, and $\fp^* = R$ otherwise.  Indeed, if $\fp^* \neq \fp$, then there exists $t/s \in (S_{\fM}^{-1} \fp) \cap R$ such that $t/s \not \in \fp$.  Write $t/s = g \in R$, with $g \not \in \fp$.  Since $t = gs \in \fp$ and $\fp$ is prime, this implies that $s \in \fp$.  Since $s \in S_{\fM}$ we conclude that $\fp \not \subset \fM$, and hence $\fp \not\subset z \cdot \fm$ for any $z \in X(N)$.  Furthermore in this case we have $s/s =1 \in \fp^*$, so $\fp^* = R$.  Now, all of these steps are clearly reversible, except possibly the implication  $\fp \not \subset \fM \Longrightarrow \fp \not\subset z \cdot \fm$ for all $z \in X(N)$.
The converse of this statement reads $\fp \subset \fM \Longrightarrow \fp \subset z \cdot \fm$ for some $z \in X(N)$.  This is precisely the prime avoidance lemma.  This completes the proof of our claim about $\fp^*$.

We next recall  some definitions from commutative algebra.  An {\em associated prime} of an ideal $\fa \subset R$ is a prime ideal $\fp$ such that there exists an $R$-module injection $R/\fp \hookrightarrow R/\fa$.  (The associated primes play the role of the irreducible factors in our simplified proof for $n=m=2.$)
An ideal $\fa \subset R$ is called {\em unmixed of height $r$} if all its associated prime ideals have height $r$.

Next we recall the definitions of {\em dimension} and {\em degree} of an ideal of $R$ and some of the basic properties of these functions. Let $R_0 = k[t_0, \dots, t_n]$.  For $f \in R$, let $f_0 \in R_0$ denote the homogenization of $f$, defined by padding each monomial of $f$ with the correct power of $t_0$ to obtain a homogeneous polynomial of degree $\deg(f)$. For an ideal $\fa \subset R$, let $\fa_0$ denote the homogeneous ideal generated by $f_0$ for $f \in \fa$. Then $R_0/\fa_0$ is a graded $R_0$-module.

There is a polynomial
\[ H_{\fa}(t) = a_d t^d + \cdots + a_0 \in \Q[x], \]
called the {\em Hilbert polynomial} of $\fa$, such that
\[ H_{\fa}(i) = \dim_k(i\text{th graded piece of } R_0/\fa_0) \]
for $i$ sufficiently large.
We define the dimension and degree of $\fa$, respectively by
\[ d(\fa) = d, \qquad \ell(\fa) := \tilde{\ell}(R_0/\fa_0) := a_d \cdot d!. \]
These are both integers.  They satisfy the following properties:
\begin{itemize}
\item $\ell((f))$ is the degree of $f$ in the usual sense.
\item If $\fa \subset \fb$ and $\height(\fa) = \height(\fb)$, then $\ell(\fa) \ge \ell(\fb)$.
\item If $\fa$ and $\fb$ are unmixed of height $r$, then so is $\fa \cap \fb$, and
\[ \ell( \fa \cap \fb)  \le \ell(\fa) + \ell(\fb). \]
Note that from this, it follows that if $\fa$ is unmixed then the number of associated primes of $\fa$ is $\le \fl(\fa)$.  To see this we note that there is a {\em primary decomposition} $\fa = \cap_{i=1}^r \fq_i$, where $\{\sqrt{\fq_i}\}$ is the set of associated primes.
\end{itemize}

We can now begin the proof of Theorem~\ref{t:deta}.
Let $f \in R$ have degree $D$ and let \[ N_r = D^{\eta_1} + \cdots + D^{\eta_{r-1}}, \qquad \text{ for } 1\le r \le  n+1. \]  We will inductively construct $f_r$, a $\Z$-linear combination of elements in $X(N_r) \cdot f$, such that $\fa_r = (f_1, \dots, f_r)$ satisfies the following: either $\fa_r^* = R$ or $\fa_r^*$ is unmixed of height $r$ and degree at most $D^r$. 

\bigskip

This will give the theorem: for $r=n+1$, $\fa_r^*$ cannot have height $n+1$, so $\fa_r^* = R$, which implies $\fa_r \not\subset \fM$.  In particular $f_i \not \in \fm$ for some $i$, so if $f_i = \sum d_j(z_j \cdot f)$ with $d_j \in \Z$ and $z_j \in X(N_{n+1})$ then $f(z_j) \neq 0$ for some $z_j$ as desired.

\bigskip

{\bf Base Case:} Take $f_1 = f$, $\fa_1 = (f)$.  Then $\fa_1^* = (f^*)$, where $f^*$ is the quotient of $f$ by any irreducible factors not lying in $\fM$.  If $f^* \neq 1$, then $(f^*)$  is unmixed of height 1 by Krull's principal ideal theorem, and has degree $\le D = \deg(f)$. 

\bigskip

{\bf Inductive Step: } Suppose $r \ge 2$ and that we have constructed $f_1, \dots, f_{r-1}$.  If $\fa_{r-1}^* = R$, then we can take $f_r = f$.  We have $\fa_r^* = R$, and we are done.  Therefore we suppose that $\fa_{r-1}^*$ is unmixed of height $r-1$ and degree at most $D^{r-1}$.  The construction of $f_r$ is slightly elaborate in this case, so let us outline the steps.

\begin{enumerate}
\item For any associated prime $\fp$ of $\fa_{r-1}^*$, show by counting that there exists $a \in \Z^m(D^{\eta_{r-1}})$ such that $x^{-a} \fp$ is not associated to $\fa_{r-1}^*$, i.e. that $\fp$ is not associated to $x^a\fa_{r-1}^*$.
\item Show that this implies there exists $1 \le i \le r-1$ such that $x^a f_i \not\in \fp$.
\item Show that this implies there exists a $\Z$-linear combination $f_r$ of these $x^a f_i$ that does not lie in any $\fp$ associated to 
$\fa_{r-1}^*$.
\item Letting $\fa_r = (\fa_{r-1}, f_r)$, show that $\fa_r^* = R$ or $\fa_r^*$ is unmixed of height $r$.
\end{enumerate}

It is perhaps worth pointing out here that the fourth point above is precisely the reason that associated primes appear in this proof---the key fact is that if an element $f_r$ does not lie in any prime associated to $\fa_{r-1}^*$, then the height of $\fa_r^* = (\fa_{r-1}, f_r)^*$ goes up by one (or $\fa_r^* =  R$). 
Let us now carry out the 4 steps above.

\bigskip

(1) Let $\fp$ be associated to $\fa_{r-1}^*$.
 Then $\fp \subset \fM$, so $\fp \subset z \cdot \fm$ for some $z \in X$, so $z^{-1} \cdot \fp \subset \fm$.  By definition, 
$\rank(\Stab_X(z^{-1} \cdot \fp)) \le m'_{r-1}$, whence $\rank(\Stab_X(\fp)) \le m'_{r-1}$.

\begin{lemma} \label{l:images}  Let $T$ be a positive integer. Let $\Z^m(T)$ denote the set of tuples $(a_1, \dots, a_m) \in \Z^m$ with $0 \le a_i \le T$ for each $i$.
If $H \subset \Z^m$ is a subgroup of rank $h$, then the image of $\Z^m(T)$ in $\Z^m/H$ has size at least $(T+1)^{m - h}$.  \end{lemma}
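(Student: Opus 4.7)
The plan is to exhibit an explicit injection $[0,T]^{m-h} \hookrightarrow \Z^m(T)$ whose image lands in distinct cosets of $H$, thereby producing $(T+1)^{m-h}$ distinct elements in the image.

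First I would reduce to the saturated case. Let $H' = (H \otimes_\Z \Q) \cap \Z^m$ denote the saturation of $H$; this is a rank $h$ subgroup containing $H$, and the natural surjection $\Z^m/H \twoheadrightarrow \Z^m/H'$ shows that the image of $\Z^m(T)$ in $\Z^m/H$ has size at least its image in $\Z^m/H'$. Because $H'$ is saturated, $\Z^m/H' \cong \Z^{m-h}$ is free, so it suffices to prove the following clean statement: for any surjective homomorphism $\phi \colon \Z^m \twoheadrightarrow \Z^{m-h}$, the image satisfies $|\phi(\Z^m(T))| \ge (T+1)^{m-h}$.

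For this, let $M \in M_{(m-h) \times m}(\Z)$ be the matrix of $\phi$ in the standard bases. Surjectivity implies $M$ has rank $m-h$ over $\Q$, so there exists a subset $I = \{i_1 < \cdots < i_{m-h}\} \subset \{1, \ldots, m\}$ such that the $(m-h) \times (m-h)$ submatrix $M_I$ consisting of the columns of $M$ indexed by $I$ has nonzero determinant. Define $\sigma \colon \Z^{m-h} \to \Z^m$ by the standard inclusion onto the $I$-coordinates: $\sigma(y_1, \ldots, y_{m-h})$ has $y_k$ in position $i_k$ and $0$ elsewhere. Then $\phi \circ \sigma$ is represented by $M_I$, hence is injective over $\Z$. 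Crucially, $\sigma([0,T]^{m-h}) \subset \Z^m(T)$ automatically, since the nonzero coordinates of $\sigma(y)$ are exactly the entries of $y$, all of which lie in $[0, T]$. Combining this containment with injectivity of $\phi \circ \sigma$ yields $|\phi(\Z^m(T))| \ge |\phi(\sigma([0,T]^{m-h}))| = (T+1)^{m-h}$.

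No step presents a serious obstacle; the essential observation is that a section $\sigma$ given by the standard inclusion on a well-chosen set of coordinates is automatically compatible with the box structure of $\Z^m(T)$. An approach via Smith normal form applied directly to $H$ would give the same result but requires a change of basis on $\Z^m$, which complicates the analysis of the box and is therefore less clean.
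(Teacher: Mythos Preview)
Your proof is correct and follows essentially the same idea as the paper's: both arguments select $m-h$ standard basis vectors whose span meets $H$ only in $0$, so that the standard box $[0,T]^{m-h}$ on those coordinates injects into $\Z^m/H$. The paper does this in one line---choose $m-h$ canonical basis vectors generating a subgroup $B$ with $H \cap B = \{0\}$, then note that the quotient map is injective on $B$ and that $B \cap \Z^m(T)$ has exactly $(T+1)^{m-h}$ elements. Your saturation step and passage to the matrix $M$ are harmless but unnecessary: the condition $H \cap B = \{0\}$ already gives injectivity of $B \to \Z^m/H$ without first making the quotient free.
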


Before proving the lemma, we first note that it implies that 
 the image of $\Z^m(D^{\eta_{r-1}})$ in $\Z^m/\Stab_X(\fp)$ has size at least 
\[ (\lfloor D^{\eta_{r-1}} \rfloor +1)^{m - m'_{r-1}} > (D^{\eta_{r-1}})^{m - m'_{r-1}} = D^{r-1}. \]
Now, the number of primes associated to $\fa_{r-1}^*$ is at most its degree $\ell(\fa_{r-1}^*) \le D^{r-1}$.
Therefore, there exists $a \in \Z^m(D^{\eta_{r-1}})$ such that $x^{-a}\fp$ is not an associated prime of $\fa_{r-1}^*$. Equivalently,
 $\fp$ is not an associated prime of $x^a \fa_{r-1}^*$.  This completes the first step.
 
 \begin{proof}[Proof of Lemma~\ref{l:images}]
Choose $m-h$ elements of the canonical basis of $\Z^m$ that generate a subgroup $B$ such that $H \cap B = \{0\}$.  Then the canonical map from $\Z^m$ to $\Z^m/H$ is injective when restricted to $B$.  The result follows since $B \cap \Z^m(T)$ contains exactly $(T+1)^{m-h}$ elements.
\end{proof}

 \bigskip
 
(2)  We move on to the second step.  Since $\fp$ and $x^a \fa_{r-1}^*$ are unmixed of the same height $r-1$, but $\fp$ is not associated to $x^a\fa_{r-1}^*$, it follows that 
$x^a \fa_{r-1}^* \not\subset \fp$. 
This implies $x^a \fa_{r-1} \not\subset \fp$ since $\fp^* = \fp$.  Since \[ \fa_{r-1} = (f_1, \dots, f_{r-1}),\] this implies there exists $1 \le i \le r-1$ such that $x^a f_i \not \in \fp$. This completes the second step.

\bigskip

(3) Step 3 follows from a general lemma.

\begin{lemma} \label{l:lincomb}
 Let $\fp_1, \dots, \fp_s$ be prime ideals of $R$ and let \[ f_1, \dots, f_s \in R \] such that $f_i \not\in \fp_i$.  Then there exists a $\Z$-linear combination of the $f_i$ that does not lie in any $\fp_i$. 
\end{lemma}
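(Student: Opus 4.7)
The plan is to induct on $s$. The base case $s=1$ is immediate by taking $f_1$ itself. For the inductive step, assume integers $a_1, \dots, a_{s-1}$ have been produced with $g := \sum_{i=1}^{s-1} a_i f_i$ lying outside each of $\fp_1, \dots, \fp_{s-1}$. If by luck $g \not\in \fp_s$, then $g$ is already the desired combination; so suppose $g \in \fp_s$, and I will adjust by adding an integer multiple of $f_s$.

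The key observation is that for each $i$, the set of integers $a \in \Z$ for which $g + a f_s \in \fp_i$ is finite (in fact, has at most one element). Indeed, working in the integral domain $R/\fp_i$, the condition $g + af_s \in \fp_i$ becomes $\bar g = -a \bar f_s$ in $R/\fp_i$; for $i < s$ either $\bar f_s = 0$ (forcing $\bar g = 0$, contradicting $g \not\in \fp_i$) or else $\bar f_s \neq 0$ and there is at most one integer $a$ satisfying this linear equation in the fraction field of $R/\fp_i$, because $k$ (and hence $R/\fp_i$) has characteristic $0$ so $\Z$ injects into $\Frac(R/\fp_i)$. For $i = s$, since $g \in \fp_s$, the condition becomes $a f_s \in \fp_s$, and primality plus $f_s \not\in \fp_s$ reduces this to $a \in \fp_s \cap \Z = (0)$, so only $a = 0$ is excluded.

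Therefore at most $s$ integers are forbidden, and since $\Z$ is infinite, any other integer $a$ yields a combination $g + a f_s$ lying outside every $\fp_i$, completing the induction. The only subtlety is the characteristic-$0$ input, which guarantees that nonzero integers remain nonzero modulo each $\fp_i$; without it the one-integer-per-prime bound could fail. Since this is exactly the standing hypothesis on $k$ in this section, no additional obstacle arises.
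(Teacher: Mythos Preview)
Your proof is correct and follows essentially the same inductive strategy as the paper: reduce to the case $g \in \fp_s$ and then show that only finitely many integers $a$ can make the adjusted combination land in some $\fp_i$, using that $k$ has characteristic $0$. The paper phrases this finiteness via a pigeonhole argument on subsets of $\{\fp_1,\dots,\fp_{s-1}\}$ (considering $f_s + ag$ for $a=0,\dots,2^{s-1}$) rather than your cleaner one-forbidden-integer-per-prime count, but the underlying mechanism is identical.
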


\begin{proof}  Induction on $s$.  In the base case $s=1$, there is nothing to prove.  For larger $s$, suppose that $g$ is a $\Z$-linear combination of $f_1, \dots, f_{s-1}$ that does not lie in $\fp_1, \dots, \fp_{s-1}$.  If $g \not \in \fp_{s}$, then we can simply take $g$ and we are done.  So suppose $g \in \fp_s$.  

Consider all linear combinations $f_s + ag$ with $a \in \Z$.  
For each $a$, consider the set $S_a \subset \{ \fp_1, \dots, \fp_{s-1} \}$ consisting of the $\fp_i$ such that $f_s + ag \in \fp_i$.
There are $2^{s-1}$ possible subsets $S_a$. By the Pigeonhole Principle, if we take $a=0, \dots, 2^{s-1}$, then there must exist distinct $a, a'$ such that $S_a = S_{a'}$.  But if $f_s + ag, f_s + a'g \in \fp_i$ for $1 \le i \le s-1$, then $(a-a')g \in \fp_i$, whence $g \in \fp_i$ (since $k$ has characteristic 0), a contradiction.   Therefore $f_s + ag \not \in \fp_i$ for $i \le s-1$. 

Also, $f_s \not\in \fp_s$ but $g \in \fp_s$ implies $f_s + ag \not\in \fp_s$.  Therefore $f_s + ag$ is the desired linear combination.
\end{proof}

We can now complete step 3: We conclude that there is a $\Z$-linear combination $f_r$ of the $x^a f_i$ (where $1 \le i \le r-1$ and $a \in \Z^m(D^{\eta_{r-1}})$) such that $f_r$ does not lie in any associated prime of $\fa_{r-1}^*$.

\bigskip

(4) Step 4 will follow from the following lemma.

\begin{lemma} \label{l:step4}
 Let $\fa \subset R$ be unmixed of height $r-1$ and suppose $f \in R$ is not contained in any of the primes associated to $\fa$.   Let $\fb =\fa + (f)$.  Then either $\fb = R$ or $\fb$ has height $r$.  In the latter case, $\ell(\fb) \le \ell(\fa) \cdot \deg f$. 
\end{lemma}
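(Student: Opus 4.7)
The plan is to handle the two conclusions of Lemma~\ref{l:step4} separately, both grounded on the observation that $f$ is a non-zerodivisor on $R/\fa$. Indeed, since $\fa$ is unmixed of height $r-1$, its associated primes coincide with its minimal primes, and by hypothesis $f$ avoids all of them; the set of zerodivisors on $R/\fa$ is the union of the associated primes, so $f$ acts injectively on $R/\fa$.

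For the height assertion, assuming $\fb \neq R$, I would take any minimal prime $\fq$ of $\fb$ and choose a minimal prime $\fp$ of $\fa$ contained in $\fq$. The containment $\fp \subseteq \fq$ is strict because $f \in \fq$ while $f \notin \fp$, so $\height(\fq) \ge r$. Conversely, Krull's principal ideal theorem applied to the image of $f$ in $R/\fp$ forces $\height(\fq/\fp) \le 1$, whence $\height(\fq) \le r$. Thus every minimal prime of $\fb$ has height exactly $r$, and so $\height(\fb) = r$.

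For the degree bound, I would pass to the homogenization $R_0 = k[t_0, t_1, \dots, t_n]$ and work with Hilbert polynomials. Since $f$ is a non-zerodivisor mod $\fa$, its homogenization $f_0$ is a non-zerodivisor on $R_0/\fa_0$, yielding the graded short exact sequence
\[ 0 \longrightarrow (R_0/\fa_0)(-\deg f) \xrightarrow{\;\cdot f_0\;} R_0/\fa_0 \longrightarrow R_0/(\fa_0 + (f_0)) \longrightarrow 0. \]
Subtracting Hilbert polynomials produces one of degree $d(\fa)-1 = n-r$ with leading coefficient $(\deg f) \cdot \ell(\fa)/(d(\fa)-1)!$, so $\ell(\fa_0 + (f_0)) = (\deg f) \cdot \ell(\fa)$. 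Since $\fa_0 + (f_0) \subseteq \fb_0$ and both ideals have height $r$ in $R_0$, the inclusion-at-equal-height monotonicity of $\ell$ cited earlier yields $\ell(\fb) \le (\deg f) \cdot \ell(\fa)$.

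The step I expect to be most delicate is the passage through homogenization: one must verify that the associated primes of $\fa_0$ in $R_0$ are precisely the homogenizations of those of $\fa$ (in particular, that none of them contain $t_0$), so that $f_0$ remains a non-zerodivisor on $R_0/\fa_0$; and that $\fb_0$ and $\fa_0 + (f_0)$ share the same height, despite possibly differing by $t_0$-saturation. These are standard properties of homogenization of ideals, but without them the Hilbert-polynomial computation would not translate cleanly into the bound on $\ell(\fb)$.
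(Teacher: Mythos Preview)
Your proposal is correct and follows essentially the same approach as the paper. Both arguments hinge on $f$ avoiding the associated primes of $\fa$ (so that $f$, and then $f_0$, is a non-zerodivisor on the relevant quotient), Krull's principal ideal theorem for the height claim, and the Hilbert-polynomial computation from the short exact sequence $0 \to (R_0/\fa_0)(-\deg f) \to R_0/\fa_0 \to R_0/(\fa_0 + (f_0)) \to 0$ for the degree bound, combined with the containment $\fa_0 + (f_0) \subset \fb_0$. Your organization of the height argument is slightly more streamlined---you bound $\height(\fq)$ directly from both sides for each minimal prime $\fq$ of $\fb$, whereas the paper first shows $r-1 \le \height(\fb) \le r$ and then rules out $r-1$ by contradiction---but the content is the same (and both implicitly use that $R$ is catenary). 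You are also right to flag the homogenization step as the delicate one; the paper simply asserts that $f_0$ avoids the associated primes of $\fa_0$ because $f$ avoids those of $\fa$, without further comment.
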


\begin{proof}
 Let $\fa = \fq_1 \cap \dots \cap \fq_m$ be a minimal primary decomposition and let $\fp_i$ be the radical of $\fq_i$. 
If $\fp_i + (f) = R$ for all $i$, then for each $i$ there exists an element of the form $(1 - gf) \in \fp_i$, hence an element of the form $(1 - gf)^j \in \fq_i$.  The product of these lies in $\fa$.  This product is congruent to 1 modulo $f$, so $1 \in \fb = (\fa, f)$.  Therefore, assume that there exists some $\fp = \fp_i$ such that $\fp + (f) \neq R$.

By Krull's principal ideal theorem, the image $\overline{\fb}$ of $\fb$ in $R/\fp$ has height 1.
The inverse image of any associated prime of $\overline{\fb} \subset R/\fp$ in $R$ is a prime of height $(r-1)+ 1 = r$.  Therefore the height of $\fb$ is at most $r$, and since $\fb \supset \fa$, the height is at least $r-1$. 

But if the height of $\fb$ is $r-1$, then it has some associated prime $\fp'$ of height $r-1$.  But $\fp' \supset \fb \supset \fa$.  As $\fa$ is unmixed of height $r-1$, this implies that $\fp'$ is an associated prime of $\fa$.  But $f \in \fp'$ and we assumed $f$ was not contained in any associated primes of $\fa$.  This is a contradiction, so we must have that the height of $\fb$ is $r$.

To conclude, we note that $(\fa + (f))_0 \supset \fa_0 + (f)_0$, hence 
\begin{align*} \ell(\fb) = \tilde{\ell}(R_0/\fb_0) & \le \tilde{\ell}(R_0/(\fa_0 + (f)_0)) \\
&= \tilde{\ell}(R_0/\fa_0) \cdot \deg(f) \\
& = \ell(\fa) \cdot \deg(f).
\end{align*} 
The second to last equality requires explanation.  
Firstly, $f_0$ is not contained in any of the
associated primes of $\fa_0$  since $f$ is not contained in any of the
associated primes of $\fa$.
  This implies that multiplication by $f_0$ is injective on $R_0/\fa_0$.  This multiplication map has degree equal to $\deg(f)$ and cokernel equal to $R_0/(\fa_0 + (f_0))$, whence
\[ H_{\fa_0 + (f_0)}(t+\deg(f)) = H_{\fa_0}(t + \deg(f)) - H_{\fa_0}(t).\]
This yields $ \tilde{\ell}(R_0/(\fa_0 + (f)_0)) =  \tilde{\ell}(R_0/(\fa_0) \cdot \deg(f)$ as desired.
\end{proof}

We can now complete step 4.  We have $\fa_r = \fa_{r-1} + (f_r)$.  Let $\fb = \fa_{r-1}^* + (f_r)$.  Then $\fa_r^* \supset \fb$, and Lemma~\ref{l:step4} implies that either $\fb = R$ or $\fb$ has height $r$ and $\ell(\fb) \le D^{r-1} \cdot D = D^r$.  

If $\fb=R$ then of course $\fa_r^*=R$, so assume the latter case holds.  Let $\fp$ be an associated prime of $\fa_r^*$.  Then $\height(\fp) \ge \height(\fa_r^*) \ge r$.  We want to show equality.  We know $\fp \subset \fm'$ where $\fm' = z \fm$ for some $z \in X(N)$.  We can work in the localization $R_{\fm'}$, which is a regular local ring. The ideal $\fa_rR_{\fm'}$ is generated by $r$ elements so Krull's height theorem implies it has height at most $r$, hence it has height exactly $r$.  Therefore it is unmixed of height $r$ and hence the same is true of the associated prime $\fp$. 

Finally, $\fa_r^* \supset \fb$ and both are unmixed of height $r$ so $\ell(\fa_r^*) \le \ell(\fb) \le D^r$.  This completes the proof of step 4, and of Theorem~\ref{t:deta}.

\section{The Matrix Coefficient Conjecture}

Both the assumption and the conclusion of the Waldschmidt--Masser Theorem are quite strong.  For instance, in the case of a square matrix of dimension $n$ with coefficients in $\sL$ or $\sL_p$, one assumes that the rank of the matrix is less than $n/2$ and one concludes that after a rational change of basis on both sides one can arrange a large block of zeroes; precisely, a block of dimension $m' \times n'$ where $m' + n' > n$.

We would like a statement that is more sensitive, and gives a ``rational'' condition whenever the rank is not full.  To this end, we have formulated with Mahesh Kakde the following conjecture.  We call it the {\bf Matrix Coefficient Conjecture} because it states that a square matrix with coefficients in $\sL$ that is singular can be made to have at least 1 zero after a rational change of basis on the left and right.

\begin{conjecture}[D--Kakde] \label{c:mcc}
Let $M$ be a square matrix of dimension $n$ with coefficients in $\sL$ or $\sL_p$.  If $\det(M)=0$, then there exist nonzero vectors $w, v \in \Q^n$ such that $w^t M v = 0$.
\end{conjecture}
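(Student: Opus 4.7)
The plan is to approach the conjecture via the Structural Rank Conjecture (SRC); although SRC is itself open, it provides the cleanest framework. Writing $M = \sum_{i=1}^{k} \ell_i M_i$ with $\ell_1,\ldots,\ell_k$ a $\Q$-basis for the $\Q$-span of the entries of $M$ and $M_i \in M_{n\times n}(\Q)$, the $\Q$-linear independence of the $\ell_i$ gives the equivalence
\[
w^t M v = 0 \iff w^t M_i v = 0 \text{ for all } i, \qquad w, v \in \Q^n.
\]
Assuming SRC (in its $p$-adic form when working with $\sL_p$), the hypothesis $\det(M) = 0$ forces $\det(M_x) \equiv 0$ as a polynomial in $\Q[x_1,\ldots,x_k]$, where $M_x = \sum_i x_i M_i$. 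Hence $M_x$ has rank $<n$ over $\Q(x_1,\ldots,x_k)$, and there exist nonzero polynomial vectors $v(x), w(x) \in \Q[x]^n$ with $M_x v(x) = 0 = w(x)^t M_x$. The conjecture then reduces to the purely algebro-geometric task of extracting from these generic kernel vectors a single pair of rational vectors $w, v \in \Q^n$ that simultaneously satisfy $w^t M_i v = 0$ for every $i$.

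For $k = 2$, I would invoke Kronecker's canonical form for matrix pencils, which expresses a singular square pencil $M_x$, after rational left and right equivalence, as a direct sum of regular blocks and rectangular singular blocks of types $L_\epsilon$ and $L_\eta^t$. A case analysis on the singular blocks (trivial rational kernel or cokernel vectors when any $L_0$ or $L_0^t$ block appears; an explicit $w, v$ for a single $L_\epsilon$ block with $\epsilon \geq 2$; an explicit $w, v$ for a pair $L_\epsilon \oplus L_\eta^t$) yields rational $w, v$ annihilating both $M_1$ and $M_2$, and conjugating back by the rational equivalence gives the result. For general $k$, one natural strategy is to choose rationals and make a substitution of the form $x_j \mapsto a_j x_1 + b_j x_2$ for $j \geq 3$ that preserves the identical vanishing of $\det M_x$, thereby reducing to the $k=2$ sub-pencil treated above; one then needs to argue that the rational vectors so produced in fact annihilate every original $M_i$, which amounts to showing that the specialization can be chosen to capture the full structure of $M_x$.

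The main obstacles are twofold. First, SRC itself is extremely deep, and any unconditional proof must either establish it or bypass it entirely. Second, even assuming SRC, the passage from $\Q(x)$-generic kernel vectors to genuine rational simultaneous rank-one annihilators is delicate for $k \geq 3$: for $k \geq n$, an expected-dimension count shows that a codimension-$k$ linear subspace of $M_{n\times n}(\Q)$ typically misses the Segre variety of rank-one matrices, so the single hypersurface condition $\det M_x \equiv 0$ must be leveraged in a genuinely nontrivial way. This step may require either a finer normal-form theorem for matrices of linear forms (a notoriously difficult open problem in its own right) or a combinatorial reduction argument in the spirit of Masser's proof of Theorem~\ref{t:m}, perhaps adapted to isolate a rational rank-one element inside a suitable primary decomposition. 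Finally, even for $n=2$ the conjecture sits just beneath the Four Exponentials Conjecture, indicating that a full unconditional proof will require substantial new transcendence input beyond the methods of this paper.
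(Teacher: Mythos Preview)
This statement is a \emph{conjecture}, and the paper does not prove it. The paper explicitly presents it as open, records (without proof, deferring to forthcoming work) that it is implied by the Structural Rank Conjecture and that its $p$-adic version implies Leopoldt's and Gross--Kuz'min's conjectures, and notes that an auxiliary-polynomial strategy has been developed but that ``the construction of the necessary polynomials remains a mystery.'' There is therefore no proof in the paper to compare against.

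Your proposal is likewise not a proof but a strategy outline, and you are candid about this. Your conditional reduction is the natural one: assuming SRC, $\det(M)=0$ forces $\det(M_x)\equiv 0$, and the problem becomes the purely algebraic question of whether a square matrix of linear forms over $\Q$ with identically vanishing determinant can always be made to have a zero entry by $\Q$-rational row and column operations (equivalently, whether $V^\perp$ meets the variety of rank-one matrices whenever $V$ is a $\Q$-subspace of singular matrices). Your treatment of the case $k=2$ via Kronecker's pencil classification is reasonable, but the proposed reduction from general $k$ to $k=2$ by specializing $x_j \mapsto a_j x_1 + b_j x_2$ has a genuine gap: a pair $w,v$ annihilating the two matrices of the resulting pencil need not annihilate each original $M_i$ separately, and you give no mechanism to recover this. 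Your dimension count correctly flags that for large $k$ the desired rank-one element of $V^\perp$ is not guaranteed by general position alone, so the singularity hypothesis must be used in an essential way. In short, your proposal accurately identifies the shape of the difficulty and is consistent with what the paper asserts, but it does not close the gap, and the conjecture remains open.
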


Despite its simplicity, Conjecture~\ref{c:mcc} remains quite deep: in the case $n=2$, it is easily seen to be equivalent to the Four Exponentials Conjecture.  We have proven the following about the Matrix Coefficient Conjecture:
\begin{itemize}
\item Conjecture~\ref{c:mcc} is implied by the Structural Rank Conjecture.
\item The version of Conjecture~\ref{c:mcc} over $\sL_p$ implies both Leopoldt's conjecture and the Gross--Kuz'min conjecture.
\end{itemize}

We have also developed a strategy to study Conjecture~\ref{c:mcc} using auxiliary polynomials, but unfortunately the construction of the necessary polynomials remains a mystery.  Our hope is that Conjecture~\ref{c:mcc} may be more tractable than the Structural Rank Conjecture.  We will explore Conjecture~\ref{c:mcc} further in forthcoming work.

\end{document}